\documentclass[11pt, twoside]{article}
\usepackage{amsmath,amsthm,amssymb}
\usepackage{enumerate}
\usepackage{graphicx}
\usepackage{color}
\usepackage[colorlinks=true]{hyperref}

\pagestyle{myheadings}

\def\titlerunning#1{\gdef\titrun{#1}}

\makeatletter

\def\author#1{\gdef\autrun{\def\and{\unskip, }#1}\gdef\@author{#1}}

\makeatother

\def\email#1{e-mail: #1}

\def\subjclass#1{{\renewcommand{\thefootnote}{}%
\footnote{\emph{Mathematics Subject Classification (2010):} #1}}}

\def\keywords#1{\par\medskip
\noindent\textbf{Keywords.} #1}

\newtheorem{theorem}{Theorem}[section]
\newtheorem{lemma}[theorem]{Lemma}
\newtheorem{proposition}[theorem]{Proposition}

\hypersetup{linkcolor=blue,urlcolor=red,citecolor=red}

\theoremstyle{definition}

\newtheorem{remark}[theorem]{Remark}

\numberwithin{equation}{section}

\textwidth=16cm
\textheight=23cm
\parindent=16pt
\oddsidemargin=-0.5cm
\evensidemargin=-0.5cm
\topmargin=-0.5cm


\begin{document}

\baselineskip=17pt

\titlerunning{}

\title{Flow decomposition for heat equations with memory}

\author{Gengsheng Wang\thanks{Center for Applied Mathematics, Tianjin University,
		Tianjin, 300072, China. \email{wanggs62@yeah.net}}
\and
Yubiao Zhang\thanks{Center for Applied Mathematics, Tianjin University, Tianjin, 300072, China; \email{yubiao\b{ }zhang@yeah.net}}
\and
Enrique Zuazua\thanks{
	[1] Chair for Dynamics, Control and Numerics - Alexander von Humboldt-Professorship, Department of Data Science,  Friedrich-Alexander-Universit\"at Erlangen-N\"urnberg,
	91058 Erlangen, Germany (\email{\texttt{enrique.zuazua@fau.de}}),
	\newline \indent\hskip 7pt
	[2] Chair of Computational Mathematics, Fundaci\'{o}n Deusto,
	Av. de las Universidades, 24,
	48007 Bilbao, Basque Country, Spain,
	\newline \indent\hskip 7pt
	[3] Departamento de Matem\'{a}ticas,
	Universidad Aut\'{o}noma de Madrid,
	28049 Madrid, Spain
}
}

\date{}

\maketitle

\subjclass{
45K05 
35K05 
93C05 
}

\begin{abstract}
We build up a decomposition
for the  flow generated by
  the heat equation with a real analytic  memory kernel.
It consists of three components: The first  one is of parabolic nature; the second one gathers the hyperbolic component of the dynamics, with null velocity of propagation; the last one  exhibits  a finite  smoothing effect.
  This decomposition
     reveals the  hybrid parabolic-hyperbolic nature of the flow and clearly illustrates the significant impact of the memory term on the parabolic behavior of the system in the absence of memory terms.

\end{abstract}

\keywords{
	Heat equations with memory,  decomposition of the flow, hybrid parabolic-hyperbolic
behavior
}

\section{Introduction}

\subsection{Statement of the problem}

In this paper, we will  study  the following  heat equation with  memory:
\begin{eqnarray}\label{our-system}
\left\{
\begin{array}{ll}
\partial_t y(t,x)  - \Delta y(t,x)
+ \displaystyle\int_0^t M(t-s) y(s,x)ds =0,
~&(t,x)\in \mathbb R^+ \times \Omega,\\
y(t,x)=0,~&(t,x)\in \mathbb R^+\times \partial\Omega,  \\
y(0,x)=y_0(x),~&\quad~~ x \in \Omega.
\end{array}
\right.
\end{eqnarray}
Here,  $\mathbb R^+:=(0,+\infty)$, $\Omega\subset\mathbb R^n$ ($n\in \mathbb N^+:=\{1,2,3,\cdots\}$)  is a bounded domain with a $C^2$-boundary $\partial\Omega$,
$y_0$ is  an initial datum and $M$ is a memory-kernel  over $\overline{\mathbb R^+}:=[0,+\infty)$.

Although our analysis can be generalized to less regular memory kernels, for the sake of simplicity we assume that:
\begin{description}
  \item[ ] ($\mathfrak C$)~~~~
 the memory kernel
  $M$ is  a real analytic and nonzero function over $\overline{\mathbb R^+}$.
\end{description}
Equations with memory arise in the modeling of many  physical phenomena such as
viscoelasticity,  heat conduction, etc.
They can be  traced back to the works of J. Maxwell  \cite{Maxwell}, L. Boltzmann \cite{Boltzmann-1,Boltzmann-2} and V. Volterra \cite{Volterra-1,Volterra-2}.  For instance, in the analysis of elastic materials, L. Boltzmann and V. Volterra  represented the stress tensor in terms of the strain tensor as well as its history values.
Equations involving memory terms have been  widely studied: see for instance
\cite{Amendola-Fabriio-Golden,Cattaneo,  Chaves-Silva-Zhang-Zuazua,   Christensen,Coleman-Gurtin,
Dafermos,Farbizio-Gogi-Pata,Fu-Yong-Zhang,Gurtin-Pipkin, Lu-Zhang-Zuazua, Pandolfi} and the references therein.
 In particular, in \cite{Gurtin-Pipkin} the general memory effect in heat conduction processes was analyzed showing that temperature waves travelling in the direction of the heat-flux  propagate faster than wave travelling in the opposite direction, while in \cite{Dafermos}
 the asymptotic behavior of the systems of linear viscoelasticity at large time was analyzed,
    introducing a new auxiliary variable to deal with  the history of the states.

By standard methods (see, for instance, \cite[Theorem 1.2 in Section 6.1, p. 184]{Pazy}),  it can be shown that the
 equation \eqref{our-system}, with $y_0\in L^2(\Omega)$, has a unique mild solution, denoted by $y(\cdot;y_0)$, in the space $C(\overline{\mathbb R^+}; L^2(\Omega) )$.
For each $t\geq 0$, we let the evolution of the system be denoted by:
\begin{align}\label{varPhi-y-y0}
\varPhi(t)y_0:=y(t;y_0),\;\; y_0 \in L^2(\Omega).
\end{align}
For each $t\ge 0$,   the flow  generated by the
equation \eqref{our-system}, $\varPhi(t)$, belongs to $\mathcal L( L^2(\Omega) )$. Here and in what follows,
 we denote by $\mathcal L(E,F)$ (where $E$ and $F$ are two Banach  spaces) the space of all linear
 and bounded  operators from  $E$ to  $F$, and  simply write $\mathcal L(E)$ for $\mathcal L(E,E)$.

We shall use the notation $\{e^{tA}\}_{t\geq 0}$ for the $C_0$ semigroup generated by the heat equation in the absence of memory term (i.e., when $M \equiv 0$), where
\begin{eqnarray}\label{selfadjoit-elliptic-operator}
A f := \Delta f,
\;\;\mbox{with its domain}\;\;
D(A):=H^2(\Omega)\cap H_0^1(\Omega).
\end{eqnarray}
Then $z(t;y_0):=e^{tA}y_0,\; t\geq 0$, solves \eqref{our-system} without memory, i.e., \eqref{our-system} when  $M \equiv 0$.

This paper is devoted to analyzing the dynamics of the system with memory term and, in particular, to exhibiting the significant differences with the heat semigroup in the absence of memory.

\subsection{Main results}

The aim of this paper is to build up a decomposition of the flow $\varPhi(t)$, revealing a   hybrid parabolic-hyperbolic
 dynamics of the  \eqref{our-system}.

To state our main results, we first introduce several  concepts, definitions and notations.
\begin{itemize}

\item Let $\eta_j>0$  be the $j^{\text{th}}$ eigenvalue of $-A$ and let $e_j$
be the corresponding normalized eigenfunction in $L^2(\Omega)$.
Define, for each $s\in \mathbb{R}$,
  the real Hilbert space:
\begin{eqnarray}\label{def-space-with-boundary-condition}
\mathcal H^s :=
\Big\{f= \sum_{j=1}^\infty a_j e_j ~:~ (a_j)_{j\geq 1}\subset \mathbb R,~
\sum_{j=1}^\infty |a_j|^2 \eta_j^{s}<+\infty
\Big\},
\end{eqnarray}
equipped with the inner product:
\begin{eqnarray*}
\langle f_1, f_2\rangle_{\mathcal H^s}   :=
\sum_{j=1}^\infty a_{j,1} a_{j,2} \eta_j^{s},\;
\;\;\;\;
f_k= \sum_{j=1}^\infty a_{j,k} e_j
\in
\mathcal H^s~(k=1,2).
\end{eqnarray*}

For all  $t\geq 0$, $\varPhi(t)$ belongs to $\mathcal L( \mathcal H^s)$  for any $s\in \mathbb{R}$ (see
 Proposition \ref{prop5.1}).

We now introduce the classes:
 \begin{align}\label{1.4,2-10}
\mathcal H^{-\infty}:=\displaystyle\bigcup_{s\in \mathbb R} \mathcal H^{s}
~~\text{and}~~
\mathcal H^{+\infty}:= \displaystyle\bigcap_{s\in \mathbb R} \mathcal H^{s}.
\end{align}

\item Recall  that  for each continuous function $f$ over $\mathbb R^+$,  the operator $f(-A)$ can be defined by the spectral functional calculus
  (see \cite[Section 3, Chapter V.III]{SIMON1}).

  \item Let us also define the following functions,  related to the memory kernel $M$, that will play  important roles in the decomposition of the flow.

  First, we introduce the {\it flow kernel}:
  \begin{align}\label{new-kernel-KM}
	K_M(t,s):=
	\sum_{j=1}^{+\infty}  \frac{ (-s)^j }{ j! } \underset{j}{ \underbrace{M*\cdots*M} }(t-s),~~&(t,s)\in S_+:= \big\{(t,s)\in \mathbb R^2 ~:~ t\geq s \big\}.
	\end{align}
	Here and throughout the paper, $*$ denotes the usual convolution, i.e., when $g_1,g_2\in L^1_{loc}(\overline{ \mathbb R^+ })$,
 $$g_1 * g_2 (t) :=  \int_0^t  g_1(t-s) g_2(s) ds, \, t\geq 0.$$
 Notice that the above  $K_M$ is well-defined and  it
 is real analytic over $S_+$
 (see Proposition \ref{prop-KM-analytic}). The following  holds (see Proposition \ref{prop-varPhi-expression})
	\begin{align*}
	\varPhi(t)
	= e^{tA} +   \int_0^t K_M(t,\tau) e^{\tau A} d\tau,
	\;\;  t\geq 0,
	\end{align*}
which yields a clear description of the gap between the heat equation and the memory one and justifies the terminology ``flow kernel" employed.

  Second, for each $N\in\mathbb N^+$, let
  \begin{align}\label{0921-RN-good-remainder}
	R_N(t,\tau) := \int_0^t \tau e^{-\tau s} \partial_s^N K_M(t,s)  ds,~~
	t\geq0,~~\tau\geq 0.
	\end{align}

    Third,  we define  two sequences of functions $\{h_l\}_{l\in\mathbb N}$ and  $\{p_l\}_{l\in \mathbb N}$ (that will play the role of coefficients in the expansions) in the following manner: for each $t \ge 0$,
\begin{align}\label{thm-ODE-meomery-asymptotic-estimate-hypobolic}
\begin{cases}
h_l(t):=&
(-1)^l \displaystyle\sum_{j=0}^l  C_{l}^{l-j}
\dfrac{ d^{(l-j)} }{ dt^{(l-j)} } \underset{j}{ \underbrace{M*\cdots*M} } (t);
\\
p_l(t):=& - h_l(0)+ (-1)^{l+1}
\displaystyle\sum_{  \tiny\begin{array}{c}
	m,j\in \mathbb N^+, \\
	2j-l-1\leq m \leq j
	\end{array}
}
\bigg(
C_{l}^{l-j+m}
\dfrac{ d^{(l-j+m)} }{ dt^{(l-j+m)} } \underset{j}{ \underbrace{M*\cdots*M} }(0)
\bigg)  \frac{(-t)^m}{m!}.
\end{cases}
\end{align}
Here,
$C_\beta^m:=\beta!/m!(\beta-m)!$
 denotes the binomial coefficients
 and
  $\underset{j}{ \underbrace{M*\cdots*M} }:=0$ when $j=0$.
\item Let $f$ be a distribution  over a domain $\mathcal D\subset \mathbb R^k$ (with $k\in \mathbb{N}^+$).
By
the notation  $f\in L^2_{loc}(q)$ (with $q\in \mathcal D)$, we refer to the fact that $f|_{ \mathcal U_0 } \in L^2( \mathcal U_0 )$ for an open non-empty subset $\mathcal U_0$ such that $q\in \mathcal U_0 \subset \mathcal D$.

\end{itemize}

The main results of this paper are   as follows.

\begin{theorem}\label{cor-0423-demcomposition}
	For each integer  $N\geq 2$, the flow $\varPhi(t)$ admits  the following decomposition:
\begin{align}\label{0423-demcomposition-eq}
	\varPhi(t) =  \mathcal P_N(t)+  \mathcal W_N(t)  +  \mathfrak R_N(t),~~t\geq0,
\end{align}
with
 \begin{align}\label{def-PN-HN-RN}
\left\{
	\begin{array}{lll}
	  \mathcal P_N(t)&:=& e^{tA} + e^{tA}\sum_{l=0}^{N-1} p_{l}(t)  (-A)^{-l-1}   ,\\
	  \mathcal W_N(t)&:=&\sum_{l=0}^{N-1} h_{l}(t)  (-A)^{-l-1},\\
	  \mathfrak R_N(t)&:=&R_N(t,-A) (-A)^{-N-1},
	\end{array}
~~~~~	t\geq0,
\right.
\end{align}
where  $\{h_l\}_{l\in\mathbb{N}}$ and $\{p_l\}_{l\in\mathbb{N}}$ are given by \eqref{thm-ODE-meomery-asymptotic-estimate-hypobolic}
and $R_N$ is given by \eqref{0921-RN-good-remainder}.
Moreover, for each $t\geq 0$,  neither $\{h_l(t)\}_{l\geq1}$ nor  $\{p_l(t)\}_{l\in\mathbb{N}}$
	 is the null sequence,  i.e.,
\begin{align}\label{2021-april-hl-pl-nontrivial}
 \sum_{l\geq 1} |h_l(t)| >0
 ~~\text{and}~~
 \sum_{l\geq 1}  |p_l(t)|>0.
\end{align}

\end{theorem}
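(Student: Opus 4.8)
The plan is to extract the decomposition directly from the flow-kernel identity $\varPhi(t) = e^{tA} + \int_0^t K_M(t,\tau)e^{\tau A}\,d\tau$ (Proposition \ref{prop-varPhi-expression}), by performing a Taylor-type expansion of $K_M(t,\tau)$ in the variable $\tau$ around $\tau = 0$, up to order $N$, and isolating the three qualitatively distinct pieces. First I would write, for fixed $t$, the exact Taylor formula with integral remainder,
\begin{align*}
K_M(t,\tau) = \sum_{l=0}^{N-1} \frac{\tau^l}{l!}\,\partial_s^l K_M(t,0) + \frac{1}{(N-1)!}\int_0^\tau (\tau-s)^{N-1}\partial_s^N K_M(t,s)\,ds,
\end{align*}
which is legitimate since $K_M$ is real analytic on $S_+$. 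Substituting this into $\int_0^t K_M(t,\tau)e^{\tau A}\,d\tau$ and using the spectral calculus, the polynomial part produces a finite sum of terms of the form $\big(\int_0^t \tau^l e^{\tau A}\,d\tau\big)\,\partial_s^l K_M(t,0)/l!$. The key algebraic step is to evaluate $\int_0^t \tau^l e^{\tau A}\,d\tau$ by repeated integration by parts: each such integral splits into a ``boundary'' contribution at $\tau = t$, which carries the factor $e^{tA}$ and hence will be absorbed into $\mathcal P_N(t)$, and a contribution at $\tau = 0$ together with lower-order integrals, which — after collecting powers of $(-A)^{-1}$ — assemble into the $e^{tA}$-free sum defining $\mathcal W_N(t)$. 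Matching the resulting coefficients against the combinatorial definitions \eqref{thm-ODE-meomery-asymptotic-estimate-hypobolic} of $h_l$ and $p_l$ is then a bookkeeping exercise: the derivatives $\partial_s^l K_M(t,0)$ are computed by differentiating the series \eqref{new-kernel-KM} term by term, which is exactly what produces the binomial sums over the iterated convolutions $M*\cdots*M$ appearing in $h_l$ and $p_l$; the constraint $2j-l-1\le m\le j$ in the definition of $p_l$ records precisely which cross-terms survive after the integrations by parts are carried out to depth $N$.

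The remainder term is handled separately: plugging the integral remainder of the Taylor formula into $\int_0^t K_M(t,\tau)e^{\tau A}\,d\tau$ and exchanging the order of integration, one obtains an operator whose symbol is $\int_0^t \big(\int_0^\tau (\tau-s)^{N-1}\partial_s^N K_M(t,s)\,ds\big) e^{-\tau\eta}\,d\tau$ (writing $\eta$ for a generic eigenvalue of $-A$). A further integration by parts in $\tau$, or a direct rearrangement, converts this into the form $R_N(t,\eta)\,\eta^{-N-1}$ with $R_N$ as in \eqref{0921-RN-good-remainder}; the extra factor $\eta^{-N-1}$ is the source of the claimed finite smoothing effect and is what makes $\mathfrak R_N(t) = R_N(t,-A)(-A)^{-N-1}$ a bounded operator gaining $2(N+1)$ derivatives. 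One must check that $R_N(t,-A)$ itself is bounded on each $\mathcal H^s$, which follows from the analyticity and consequent local boundedness of $\partial_s^N K_M$ on $S_+$ together with the decay of $e^{-\tau\eta}$; this is the analytic (as opposed to algebraic) content of the argument.

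Finally, for the nontriviality assertion \eqref{2021-april-hl-pl-nontrivial}, I would argue by contradiction using hypothesis $(\mathfrak C)$. If $h_l(t_0) = 0$ for all $l\ge 1$ at some $t_0\ge 0$, then since each $h_l$ is real analytic in $t$ and the map $l\mapsto h_l$ is, up to the triangular binomial transform, essentially the coefficient sequence of the resolvent-type series built from $M$, vanishing of all $h_l$ would force the iterated convolutions $M, M*M, M*M*M,\dots$ and all their derivatives to satisfy an overdetermined system that, unwound, yields $M\equiv 0$ — contradicting $(\mathfrak C)$. Concretely, $h_1(t) = -\,\frac{d}{dt}(M*0) + \cdots$ should reduce to something proportional to $M(t)$ or its primitive, so $h_1\equiv 0$ already gives $M\equiv 0$; the analogous observation for $p_l$ uses that $p_l(0) = -h_l(0)$ together with the fact that the $m=\cdots$ coefficients encode $\frac{d^{(\cdot)}}{dt^{(\cdot)}}(M*\cdots*M)(0)$, which cannot all vanish for a nonzero analytic $M$. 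I expect the main obstacle to be the second step — the precise matching of the integration-by-parts output with the combinatorial formulas for $h_l$ and $p_l$, in particular verifying that the index range $2j-l-1\le m\le j$ is exactly the set of surviving terms — since this requires carefully tracking how each of the $N$ integrations by parts redistributes a factor of $(-A)^{-1}$ and a time-derivative between the $\tau=t$ and $\tau=0$ endpoints.
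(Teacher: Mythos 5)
Your strategy is close in spirit to the paper's but differs in one technical step, and that step contains a concrete error. The paper expands the scalar integral $\int_0^t e^{-\eta s}K_M(t,s)\,ds$ by integrating by parts $N$ times directly (Lemma~\ref{lem-estimate-for-convolution}), with $K_M(t,\cdot)$ as the function being differentiated. This produces boundary terms at \emph{both} endpoints in one pass, and Lemma~\ref{lem-KM-boundary-value} then identifies $\partial_s^l K_M(t,s)|_{s=0}=h_l(t)$ and $\partial_s^l K_M(t,s)|_{s=t}=-p_l(t)$. Your route instead Taylor-expands $K_M(t,\tau)$ at $\tau=0$ with integral remainder. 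That is a legitimate alternative, but your treatment of the remainder is wrong: after Fubini, the Taylor remainder contributes
\begin{align*}
\frac{1}{(N-1)!}\int_0^t \partial_s^N K_M(t,s)\,e^{-\eta s}
\int_0^{t-s} e^{-\eta u} u^{N-1}\,du\,ds
= R_N(t,\eta)\,\eta^{-N-1}
\;-\; e^{-\eta t}\sum_{k=0}^{N-1}\eta^{-k-1}\int_0^t \partial_s^N K_M(t,s)\,\frac{(t-s)^{N-1-k}}{(N-1-k)!}\,ds,
\end{align*}
\emph{not} just $R_N(t,\eta)\,\eta^{-N-1}$. The extra $e^{-\eta t}$-weighted term is essential: your polynomial part alone produces $-e^{-\eta t}\sum_{k}\eta^{-k-1}\sum_{l=k}^{N-1}h_l(t)\,t^{l-k}/(l-k)!$, which does \emph{not} equal $e^{-\eta t}\sum_k p_k(t)\eta^{-k-1}$ by itself (and would give an $N$-dependent ``$p_k$'', contradicting the definition). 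Only when you add the extra piece from the remainder do you recover, via the Taylor formula for $s\mapsto\partial_s^k K_M(t,s)$ on $[0,t]$, the identity $p_k(t)=-\partial_s^k K_M(t,t)$ and hence the stated $\mathcal P_N$. As written, your decomposition does not close.

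Your account of the nontriviality \eqref{2021-april-hl-pl-nontrivial} also glosses over what is actually at stake. The claim is pointwise in $t$: for \emph{each} fixed $t_0\geq 0$ the sequence $\{h_l(t_0)\}_{l\geq 1}$ is nonzero. Your observation ``$h_1\equiv 0\Rightarrow M\equiv0$'' addresses identical vanishing in $t$, not the pointwise statement; $h_1(t_0)=-M(t_0)=0$ is perfectly compatible with $M\not\equiv 0$. The paper's argument (Proposition~\ref{prop-KM-strong-unique}) is a strong unique continuation statement for $K_M$: if all $\partial_s^l K_M(t_0,0)$ vanish, then by real analyticity $K_M(t_0,t_0-\lambda)\equiv0$ for all $\lambda\geq0$, and a Neumann-series invertibility argument then forces $M\equiv0$. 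That last step is the genuine content and is not a one-liner; your proposal only gestures at it. Finally, the claim that the index range $2j-l-1\leq m\leq j$ in the definition of $p_l$ ``records which terms survive the integrations by parts'' is off the mark: it comes from the fact that $j$-fold convolutions vanish to order $j-1$ at $0$ (Lemma~\ref{lem-convolution-estimates}), used inside the evaluation of $\partial_s^l K_M(t,t)$, not from the integration-by-parts bookkeeping.
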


\begin{theorem}\label{210320-yb-thm-main-explanations}
 With the notation in Theorem \ref{cor-0423-demcomposition},
 the following conclusions are true for each integer $N\geq 2$:
\begin{enumerate}

\item[(i)] The first component  $\mathcal P_N$ exhibits a   heat-like behavior:
for each $t>0$,  $\mathcal P_N(t) \mathcal H^{-\infty}\subset \mathcal H^{+\infty}$, where $\mathcal H^{+\infty}$
and $\mathcal H^{-\infty}$
are given by \eqref{1.4,2-10}.

\item[(ii)] The second component $\mathcal W_N$
exhibits a  wave-like behavior: when $y_0\in \mathcal H^{-\infty}$, $x_0\in\Omega$ and $t_0>0$,
\begin{align}\label{yb-202104-wn-propagation}
  \mathcal W_N(\cdot)y_0\not\in L^2_{loc}(t_0,x_0)
  ~\Longleftrightarrow~
  \forall\, t>0,~
  \mathcal W_N(\cdot)y_0\not\in L^2_{loc}(t,x_0).
\end{align}

In other words, the singularities of the solutions propagate in the time-like direction with null velocity of propagation in the space-like direction.

\item[(iii)]  The last component $\mathfrak R_N$
exhibits a time-uniform  smoothing effect with a gain of $2N+2$ space derivatives:
for each  $y_0 \in \mathcal H^s$ with $s\in \mathbb R$,
$\mathfrak R_N(\cdot)y_0 \in C\big( [0,+\infty);\mathcal H^{s+2N+2} \big)$,
while $A^{-j}e^{\cdot A}y_0,A^{-j}y_0 \in C\big( [0,+\infty);\mathcal H^{s+2j} \big)$
for any $0\leq j \leq N$. And for each  $s\in \mathbb R$, the  term  $R_N$
(in $\mathfrak R_N$)
belongs to $C(\mathbb R^+;\mathcal L( \mathcal H^s))$   and fulfills the estimate:
    \begin{eqnarray}\label{RN-property-regularity}
	\big\| R_N(t,-A) \|_{ \mathcal L( \mathcal H^s) }
	\leq e^t \bigg\{
	\exp\bigg[ N(1+t) \bigg(\sum_{j=0}^N \max_{0\leq \tau\leq t} \Big| \frac{d^j}{d\tau^j}M(\tau) \Big| \bigg) \bigg]  - 1  \bigg\}
	,~~t\geq 0.
	\end{eqnarray}
\item[(iv)]
For any $y_0\in  \mathcal H^{-\infty}$,  $x_0\in \Omega$ and $t>0$,
	 \begin{eqnarray}\label{202104-varphi-wn-y0}
	    \varPhi(\cdot)y_0\not\in L^2_{loc}(t,x_0)  \Leftrightarrow
	    \mathcal W_N(\cdot)y_0\not\in L^2_{loc}(t,x_0)  \Leftrightarrow  A^{-2}y_0\not\in L^2_{loc}(x_0).
	 \end{eqnarray}
\end{enumerate}

\end{theorem}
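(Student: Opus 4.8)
The plan is to prove Theorem~\ref{210320-yb-thm-main-explanations} one item at a time, using the explicit decomposition \eqref{0423-demcomposition-eq}--\eqref{def-PN-HN-RN} together with the functional calculus for $-A$ and the analyticity properties of $K_M$, $R_N$, $h_l$, $p_l$ already established in the cited propositions. Throughout I would work in the eigenbasis $\{e_j\}$, so that $\mathcal H^s$ is the weighted $\ell^2$-space with weights $\eta_j^s$, and $(-A)^{-l-1}e_j=\eta_j^{-l-1}e_j$, $e^{tA}e_j=e^{-t\eta_j}e_j$.

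For (i): fix $t>0$ and $f=\sum a_j e_j\in\mathcal H^{-\infty}$, so $\sum|a_j|^2\eta_j^{s_0}<\infty$ for some $s_0$. Then $\mathcal P_N(t)f=\sum_j e^{-t\eta_j}\big(1+\sum_{l=0}^{N-1}p_l(t)\eta_j^{-l-1}\big)a_j e_j$; since $p_l(t)$ are fixed reals and $\eta_j\to\infty$, the bracket is bounded in $j$, and for every $s\in\mathbb R$ the factor $e^{-t\eta_j}\eta_j^{(s-s_0)/2}$ is bounded, so $\sum_j|a_j|^2 e^{-2t\eta_j}(\text{bracket})^2\eta_j^s<\infty$. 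Hence $\mathcal P_N(t)f\in\mathcal H^s$ for all $s$, i.e. $\mathcal P_N(t)f\in\mathcal H^{+\infty}$. This is the easy case, driven purely by the exponential decay $e^{-t\eta_j}$.

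For (ii) and (iv): the key observation is that $\mathcal W_N(t)=\sum_{l=0}^{N-1}h_l(t)(-A)^{-l-1}$ is a \emph{finite} linear combination of the fixed, $t$-independent operators $(-A)^{-l-1}$ with real-analytic scalar coefficients $h_l(t)$, and $h_0(t)\equiv -1$ (from \eqref{thm-ODE-meomery-asymptotic-estimate-hypobolic} with $j=0$ absent, the $l=0$ term is $(-1)^0 C_0^0 \frac{d^0}{dt^0}(\text{empty})$; one must check the normalization so that the leading operator is exactly $-(-A)^{-1}=A^{-1}$). More precisely I would first show that for fixed $y_0$ the map $t\mapsto \mathcal W_N(t)y_0$ is real analytic with values in $\mathcal H^{-\infty}$, and that its local $L^2_x$ regularity at a point $x_0$ is governed by the lowest-order term $h_0(t)A^{-1}y_0$: since $A^{-1},\dots,A^{-N}$ map $L^2$ into successively smoother spaces, $\mathcal W_N(\cdot)y_0\in L^2_{loc}(t_0,x_0)$ iff $A^{-1}y_0$ (equivalently, by elliptic regularity modulo smoother corrections, $A^{-2}y_0$ as stated — the shift by one power comes from distinguishing $H^1_{loc}$ membership from $L^2_{loc}$ of the time-derivative, a point I would make carefully) is in $L^2_{loc}(x_0)$, a condition \emph{independent of $t$}. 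This gives both the equivalence \eqref{yb-202104-wn-propagation} and the last equivalence in \eqref{202104-varphi-wn-y0}; the first equivalence in \eqref{202104-varphi-wn-y0} then follows from \eqref{0423-demcomposition-eq} once parts (i) and (iii) are known, because $\mathcal P_N(\cdot)y_0$ and $\mathfrak R_N(\cdot)y_0$ are continuous into spaces at least as smooth as $\mathcal H^{s+4}$, hence locally $L^2$ in $(t,x)$, so they cannot affect the singular locus.

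For (iii): I would prove the operator-norm bound \eqref{RN-property-regularity} directly on eigenvalues. For each $j$, $R_N(t,\eta_j)=\int_0^t \eta_j e^{-\eta_j s}\partial_s^N K_M(t,s)\,ds$; integrating by parts in $s$ (or estimating crudely) and using the series \eqref{new-kernel-KM} for $K_M$ together with the bound $|M*\cdots*M|\le(\max_{[0,t]}|M|)^j t^{j-1}/(j-1)!$ on the $j$-fold convolution and the analogous bounds on its derivatives $\partial_s^N$, one controls $\sup_j|R_N(t,\eta_j)|$ by the right-hand side of \eqref{RN-property-regularity}; the $e^t(\exp[\cdots]-1)$ shape emerges from summing the exponential series $\sum_j \frac{1}{j!}(N(1+t)\sum_k\max|M^{(k)}|)^j$ after the $\eta_j e^{-\eta_j s}$ factor is integrated (it contributes a bounded-in-$j$ quantity, with $\int_0^t\eta e^{-\eta s}ds\le 1$). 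The smoothing statement $\mathfrak R_N(\cdot)y_0\in C([0,\infty);\mathcal H^{s+2N+2})$ is then immediate: $\mathfrak R_N(t)=R_N(t,-A)(-A)^{-N-1}$, the factor $(-A)^{-N-1}$ gains $2N+2$ derivatives, $R_N(t,-A)\in\mathcal L(\mathcal H^{s+2N+2})$ uniformly on compact time intervals by the bound just proved, and continuity in $t$ follows from the continuity of $(t,s)\mapsto\partial_s^N K_M(t,s)$ (Proposition~\ref{prop-KM-analytic}) via dominated convergence in the defining integral; the comparison regularities of $A^{-j}e^{\cdot A}y_0$ and $A^{-j}y_0$ are elementary spectral-calculus facts.

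The main obstacle I anticipate is the careful bookkeeping in parts (ii)/(iv): pinning down exactly why the critical exponent is $A^{-2}$ rather than $A^{-1}$, and proving rigorously that ``$L^2_{loc}$ singularity at $(t_0,x_0)$'' for a time-analytic $\mathcal H^{-\infty}$-valued curve depends only on $x_0$. This requires a clean lemma relating, for $f=\sum a_j e_j$, the condition $\sum a_j\eta_j^{-m} e_j\in L^2_{loc}(x_0)$ across different $m$ — essentially that multiplying Fourier-type coefficients by $\eta_j^{-1}$ (an elliptic smoothing of order $2$) can only improve local regularity, never create a singularity, combined with the fact that a finite sum $\sum_{l=0}^{N-1}h_l(t)\eta_j^{-l-1}$ with $h_0(t)\ne 0$ has the same large-$j$ behavior as its first term up to a factor bounded away from $0$ on any compact $t$-interval avoiding the zeros of $h_0$ (and $h_0\equiv -1$ removes even that caveat). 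Once this lemma is in place, items (ii) and (iv) are short; items (i) and (iii) are routine estimates in the weighted $\ell^2$ spaces.
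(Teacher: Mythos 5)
Your treatment of (i) and (iii) is essentially the paper's: (i) is the immediate consequence of the factor $e^{-t\eta_j}$, and for (iii) the paper likewise bounds $\|R_N(t,-A)\|_{\mathcal L(\mathcal H^s)}$ by $\sup_{\eta>0}|R_N(t,\eta)|$ via spectral calculus and then invokes the pointwise estimate \eqref{estimate-p-q-R}, obtained from $\int_0^t\tau e^{-\tau s}\,ds\le 1$ together with the derivative bounds on $K_M$ from Proposition \ref{prop-KM-Ck-regularity}. The genuine problems are in (ii) and (iv). First, your normalization is wrong: by the convention stated after \eqref{thm-ODE-meomery-asymptotic-estimate-hypobolic}, the $j=0$ iterated convolution is $0$, so $h_0\equiv 0$ (not $-1$) and $h_1=-M$. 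The leading term of $\mathcal W_N(t)$ is therefore $-M(t)(-A)^{-2}$, which is precisely why the critical object in \eqref{202104-varphi-wn-y0} is $A^{-2}y_0$; there is no ``shift by one power coming from distinguishing $H^1_{loc}$ from $L^2_{loc}$ of the time-derivative.'' This also means the leading coefficient is $-M(t)$, which does vanish at (isolated) points, so your plan of using a leading coefficient ``bounded away from $0$'' does not apply; the paper instead integrates the relation $\sum_l h_l(\cdot)(-A)^{-l-1}y_0\in L^2_{loc}(\hat t_0,x_0)$ in the time variable, exploiting that $M$ is nonzero analytic, to isolate $A^{-2}y_0$ modulo real multiples of $A^{-l-1}y_0$ with $l\ge 2$.

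Second, your ``clean lemma'' covers only the easy direction, namely that applying $A^{-1}$ cannot create a singularity (this is \eqref{A-loc-behavior}, elliptic regularity). The hard direction is that the smoother terms $\sum_{l\ge2}c_lA^{-l-1}y_0$ cannot cancel the singularity of $A^{-2}y_0$. The paper handles this with a Neumann-series argument: it writes $(1-\mathcal A)(A^{-2}y_0)\in L^2_{loc}(x_0)$ with $\mathcal A$ a smoothing operator of positive order, and uses $y_0\in\mathcal H^{-2m}$ to stop the expansion after $m$ steps so that the tail $\mathcal A^{m}(A^{-2}y_0)$ lands in $L^2(\Omega)$; you would need to supply this step. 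Finally, your reduction of the first equivalence in \eqref{202104-varphi-wn-y0} to (i) and (iii) is not correct as stated: for $y_0\in\mathcal H^{-2m}$ with $m>N+1$, the term $\mathfrak R_N(\cdot)y_0$ lives only in $C([0,+\infty);\mathcal H^{-2m+2N+2})$, which need not embed in $L^2_{loc}$ of space-time. The paper fixes this by applying the decomposition with $N$ replaced by an $m$ large enough that $\mathfrak R_m(\cdot)y_0\in L^\infty_{loc}(\overline{\mathbb R^+};L^2(\Omega))$, and then returns to the given $N$ through the $N$-independent equivalence with $A^{-2}y_0\notin L^2_{loc}(x_0)$.
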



\begin{theorem}\label{210320-yb-thm-main-high-frequency-behaviors}
With  the notation in Theorem \ref{cor-0423-demcomposition},
the following conclusions hold for each integer $N\geq 2$:
First, at the initial time,
\begin{align}\label{yb-march-initial-behaviors-limit}
\displaystyle\lim_{j\rightarrow+\infty}
\|\varPhi(0)e_j\|_{L^2(\Omega)} =
\displaystyle\lim_{j\rightarrow+\infty} \|\mathcal P_N(0)e_j\|_{L^2(\Omega)}=1,\,
\displaystyle\lim_{j\rightarrow+\infty} \|\mathcal W_N(0)e_j\|_{L^2(\Omega)}
=0,\,
\mathfrak R_N(0)=0.
\end{align}
Second,  at each time $t>0$,
\begin{align}\label{210320-yb-positive-behaviors-limit}
\left\{
\begin{array}{l}
\displaystyle\lim_{j\rightarrow+\infty}
\|\varPhi(t)e_j\|_{ \mathcal H^4 }=
\displaystyle\lim_{j\rightarrow+\infty} \|\mathcal W_N(t)e_j\|_{ \mathcal H^4 }=|M(t)|,
\\
\displaystyle\lim_{j\rightarrow+\infty}
\|\varPhi(t)e_j\|_{\mathcal H^s }=
\displaystyle\lim_{j\rightarrow+\infty} \|\mathcal W_N(t)e_j\|_{ \mathcal H^s}=0\;\;\mbox{for any}\;\;s<4,
\\
\displaystyle\lim_{j\rightarrow+\infty} \|\mathcal P_N(t)e_j\|_{ \mathcal H^s }=0\;\;\mbox{for any}\;\;s\in \mathbb{R},\\
\displaystyle\lim_{j\rightarrow+\infty} \|\mathfrak R_N(t)e_j\|_{ \mathcal H^s }=0\;\;\mbox{for any}\;\; s < 2N+2.
\end{array}
\right.
\end{align}
%
\end{theorem}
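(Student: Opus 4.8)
The plan is to test the decomposition \eqref{0423-demcomposition-eq} of Theorem \ref{cor-0423-demcomposition} against the eigenfunctions $e_j$, reducing every assertion to an elementary estimate on scalar multipliers and exploiting that $\eta_j\to+\infty$. Since $-Ae_j=\eta_j e_j$, the functional calculus gives $(-A)^{-l-1}e_j=\eta_j^{-l-1}e_j$, $e^{tA}e_j=e^{-t\eta_j}e_j$ and $R_N(t,-A)e_j=R_N(t,\eta_j)e_j$ (the map $\tau\mapsto R_N(t,\tau)$ being continuous on $[0,+\infty)$ by \eqref{0921-RN-good-remainder} and the real analyticity of $K_M$), while $\|e_j\|_{\mathcal H^s}=\eta_j^{s/2}$. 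Hence $\mathcal P_N(t)e_j$, $\mathcal W_N(t)e_j$, $\mathfrak R_N(t)e_j$ and $\varPhi(t)e_j$ are scalar multiples of $e_j$, with respective coefficients
\begin{align*}
a_j^{\mathcal P}(t):=e^{-t\eta_j}\Big(1+\sum_{l=0}^{N-1}p_l(t)\,\eta_j^{-l-1}\Big),\qquad
a_j^{\mathcal W}(t):=\sum_{l=0}^{N-1}h_l(t)\,\eta_j^{-l-1},\qquad
a_j^{\mathfrak R}(t):=R_N(t,\eta_j)\,\eta_j^{-N-1},
\end{align*}
and $a_j^{\varPhi}(t):=a_j^{\mathcal P}(t)+a_j^{\mathcal W}(t)+a_j^{\mathfrak R}(t)$; so each claimed limit is a statement about $|a_j^{\bullet}(t)|\,\eta_j^{s/2}$ as $j\to+\infty$. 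I would organize the argument in three steps.

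\emph{Step 1 (the coefficients $h_l$).} I would first read off from \eqref{thm-ODE-meomery-asymptotic-estimate-hypobolic} the two identities that drive the result: $h_0\equiv0$ (the single summand in the $l=0$ formula carries the $0$-fold convolution, which is $0$ by convention), and $h_1(t)=-M(t)$ (in the $l=1$ formula the summand associated with the $0$-fold convolution contributes only its first derivative, namely $0$, and the one associated with $M$ itself contributes $M(t)$). The hypothesis $N\ge2$ is used precisely here, to guarantee that $\mathcal W_N$ actually contains the $l=1$ term. It follows that $a_j^{\mathcal W}(t)=-M(t)\,\eta_j^{-2}+O(\eta_j^{-3})$, hence $\|\mathcal W_N(t)e_j\|_{\mathcal H^s}=\eta_j^{s/2-2}\big(|M(t)|+O(\eta_j^{-1})\big)$, which converges to $|M(t)|$ when $s=4$ and to $0$ when $s<4$; in particular $\|\mathcal W_N(0)e_j\|_{L^2}=O(\eta_j^{-2})\to0$.

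\emph{Step 2 (the terms $\mathcal P_N$ and $\mathfrak R_N$).} For $\mathcal P_N$: the finite sum $\sum_{l=0}^{N-1}p_l(t)\eta_j^{-l-1}\to0$, so $a_j^{\mathcal P}(0)\to1$ and $\|\mathcal P_N(0)e_j\|_{L^2}\to1$, while for $t>0$ one has $\|\mathcal P_N(t)e_j\|_{\mathcal H^s}=e^{-t\eta_j}\big(1+o(1)\big)\eta_j^{s/2}\to0$ for every $s\in\mathbb R$, since $e^{-t\eta_j}$ decays faster than any power of $\eta_j$. For $\mathfrak R_N$: $\mathfrak R_N(0)=0$ because $R_N(0,\cdot)\equiv0$ by \eqref{0921-RN-good-remainder}, and for $t>0$ the multiplier is uniformly bounded in $j$, e.g. $|R_N(t,\tau)|\le\big(\max_{0\le s\le t}|\partial_s^N K_M(t,s)|\big)\int_0^t\tau e^{-\tau s}\,ds\le\max_{0\le s\le t}|\partial_s^N K_M(t,s)|=:C(t)<+\infty$ (alternatively, \eqref{RN-property-regularity} bounds $\|R_N(t,-A)\|_{\mathcal L(\mathcal H^s)}$, hence each $|R_N(t,\eta_j)|$); therefore $\|\mathfrak R_N(t)e_j\|_{\mathcal H^s}=|R_N(t,\eta_j)|\,\eta_j^{s/2-N-1}\le C(t)\,\eta_j^{s/2-N-1}\to0$ whenever $s<2N+2$.

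\emph{Step 3 (assembling $\varPhi$).} At $t=0$, $\varPhi(0)$ is the identity on $L^2(\Omega)$, so $\|\varPhi(0)e_j\|_{L^2}=1$; together with Steps 1--2 this yields \eqref{yb-march-initial-behaviors-limit}. For $t>0$, adding the three coefficients and using $N\ge2$ together with the exponential decay of $e^{-t\eta_j}$,
\[
\eta_j^{2}\,a_j^{\varPhi}(t)=-M(t)+O(\eta_j^{-1})+R_N(t,\eta_j)\,\eta_j^{1-N}+e^{-t\eta_j}\eta_j^{2}\big(1+o(1)\big)\;\longrightarrow\;-M(t)\qquad(j\to+\infty),
\]
so $\|\varPhi(t)e_j\|_{\mathcal H^s}=|a_j^{\varPhi}(t)|\,\eta_j^{s/2}=\eta_j^{s/2-2}\big(|M(t)|+o(1)\big)$, which tends to $|M(t)|$ for $s=4$ and to $0$ for $s<4$; this is \eqref{210320-yb-positive-behaviors-limit}. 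The only points here that are not pure bookkeeping are the identification $h_0\equiv0$, $h_1=-M$ from the combinatorial formula \eqref{thm-ODE-meomery-asymptotic-estimate-hypobolic} and the uniform-in-$j$ bound on $R_N(t,\eta_j)$; everything else rests on $\eta_j^{-p}\to0$ for $p>0$ and on exponentials outgrowing polynomials.
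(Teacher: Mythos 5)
Your proof is correct and follows essentially the same route as the paper's: diagonalize on the eigenbasis to reduce everything to scalar multipliers, use $h_0\equiv 0$, $h_1=-M$ from \eqref{thm-ODE-meomery-asymptotic-estimate-hypobolic}, the exponential decay of $e^{-t\eta_j}$ for the heat-like part, and a uniform-in-$\eta$ bound on $R_N(t,\eta)$ for the remainder. The only cosmetic difference is that you derive the uniform bound on $R_N(t,\cdot)$ directly from \eqref{0921-RN-good-remainder}, whereas the paper invokes the already-established estimate \eqref{RN-property-regularity}; the substance is the same.
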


\begin{theorem}\label{210320-yb-thm-flow-L2H4}
Given $s\in \mathbb R$, the following conclusions are true:
\begin{enumerate}
\item[(i)]
 There is $C_0>0$ (independent of $s$) so that for any
 $ \alpha \in[0,4] $ and $t>0$, $\varPhi(t)$ belongs to $\mathcal L(\mathcal H^s,\mathcal H^{s+\alpha})$ and satisfies
\begin{align}\label{march-flow-H4-estimate}
   \| \varPhi(t) \|_{\mathcal L(\mathcal H^s,\mathcal H^{s+\alpha})}
   \leq     C_0 t^{ -\frac{\alpha}{2} } \exp\Big[2(1+t) \big(1+\|M\|_{C^2([0,t])} \big) \Big].
\end{align}
\item[(ii)] If there is  $\alpha_0\geq 0$  so that
\begin{align}\label{march-flow-H4-estimate-1}
   \varPhi(t) \in \mathcal L(\mathcal H^s,\mathcal H^{s+\alpha_0})
   ~~\text{as}~~
    t>0\;\;\mbox{in a neighborhood of}\;\; 0,
\end{align}
then  $\alpha_0\leq 4$
and
  $\displaystyle\liminf_{t\rightarrow 0^+}t^{\frac{\alpha_0}{2}}\| \varPhi(t) \|_{\mathcal L(\mathcal H^s,\mathcal H^{s+\alpha_0})}>0.$
\item[(iii)] For any $y_0\in \mathcal H^s$, $\varPhi(\cdot)y_0\in C(\mathbb{R}^+;\mathcal H^{s+4})$. Moreover,
the index $4$ is optimal in the sense that if $\alpha>4$, then  $\varPhi(\cdot) \hat y_0\notin C(\mathbb{R}^+;\mathcal H^{s+\alpha})$ for some $\hat y_0\in \mathcal H^s$.

\end{enumerate}
\end{theorem}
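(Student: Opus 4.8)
The starting point is the diagonalization of the flow. Writing $y_0=\sum_j a_je_j$, one has $\varPhi(t)e_j=\phi_j(t)e_j$, where, by Proposition~\ref{prop-varPhi-expression}, $\phi_j(t)=e^{-\eta_jt}+\int_0^tK_M(t,\tau)e^{-\eta_j\tau}\,d\tau$ (equivalently, $\phi_j$ solves the scalar memory ODE $\phi_j'+\eta_j\phi_j+M*\phi_j=0$, $\phi_j(0)=1$). Hence for every $s\in\mathbb R$ and $\alpha\ge0$,
\[
\|\varPhi(t)\|_{\mathcal L(\mathcal H^s,\mathcal H^{s+\alpha})}=\sup_{j\ge1}\eta_j^{\alpha/2}|\phi_j(t)|,
\]
which is automatically independent of $s$ (this is what makes $C_0$ independent of $s$). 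The other ingredient is a pointwise bound on the flow kernel, coming from $|M*\cdots*M\,(u)|\le\|M\|_{L^\infty([0,t])}^j\,u^{j-1}/(j-1)!$ ($j$ factors, $0\le u\le t$), namely $|K_M(t,s)|\le s\,\mathcal B(t)$ on $S_+\cap\{s\le t\}$ with $\mathcal B(t):=\|M\|_{L^\infty([0,t])}e^{2t\sqrt{\|M\|_{L^\infty([0,t])}}}\le\exp[2(1+t)(1+\|M\|_{C^2([0,t])})]$. Inserting this into the formula for $\phi_j$ gives $|\phi_j(t)|\le e^{-\eta_jt}+\mathcal B(t)\int_0^{\infty}\tau e^{-\eta_j\tau}\,d\tau=e^{-\eta_jt}+\mathcal B(t)/\eta_j^2$, so for $\alpha\in[0,4]$,
\[
\eta_j^{\alpha/2}|\phi_j(t)|\le\sup_{\lambda>0}\lambda^{\alpha/2}e^{-\lambda t}+\mathcal B(t)\,\eta_j^{\alpha/2-2}\le t^{-\alpha/2}+\max(1,\eta_1^{-2})\,\mathcal B(t),
\]
using $(\beta/e)^\beta\le1$ for $\beta\in[0,2]$ and $\eta_j\ge\eta_1$. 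Taking the supremum in $j$ and checking the elementary inequality $t^{-\alpha/2}+\max(1,\eta_1^{-2})\mathcal B(t)\le C_0\,t^{-\alpha/2}\exp[2(1+t)(1+\|M\|_{C^2([0,t])})]$ for a suitable $C_0=C_0(\Omega)$ yields (i); the leading singularity $t^{-\alpha/2}$ is entirely carried by the parabolic term $\sup_\lambda\lambda^{\alpha/2}e^{-\lambda t}$, while for large $t$ the exponential factor absorbs everything. (Equivalently, (i) can be read off Theorem~\ref{cor-0423-demcomposition} with $N=2$, noting $h_0\equiv0$, $\mathcal W_2(t)=-M(t)(-A)^{-2}$, and using \eqref{RN-property-regularity} for $\mathfrak R_2$.)

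For (iii), fix $t_0>0$. Since each $\phi_j$ is continuous and $\eta_j^2|\phi_j(\tau)|\le\eta_j^2e^{-\eta_j\tau}+\mathcal B(\tau)\le C_{t_0}$ for $\tau\in[t_0/2,2t_0]$ uniformly in $j$, we get $|\phi_j(t)-\phi_j(t_0)|^2|a_j|^2\eta_j^{s+4}\le4C_{t_0}^2|a_j|^2\eta_j^s$ on that interval, with $\sum_j|a_j|^2\eta_j^s=\|y_0\|_{\mathcal H^s}^2<\infty$; dominated convergence gives $\|\varPhi(t)y_0-\varPhi(t_0)y_0\|_{\mathcal H^{s+4}}\to0$ as $t\to t_0$, i.e. $\varPhi(\cdot)y_0\in C(\mathbb R^+;\mathcal H^{s+4})$. (Alternatively: $\mathfrak R_2(\cdot)y_0\in C([0,\infty);\mathcal H^{s+6})$ by Theorem~\ref{210320-yb-thm-main-explanations}(iii), $\mathcal W_2(\cdot)y_0=-M(\cdot)(-A)^{-2}y_0$ is clearly continuous into $\mathcal H^{s+4}$, and $\mathcal P_2(\cdot)y_0\in C(\mathbb R^+;\mathcal H^{s+4})$ by the smoothing of the analytic semigroup.) For optimality of the index $4$, fix $\alpha>4$; by Theorem~\ref{210320-yb-thm-main-high-frequency-behaviors}, $\eta_j^2|\phi_j(t)|\to|M(t)|$ as $j\to\infty$ for every $t>0$. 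Choose a subsequence $j_k$ with $\eta_{j_k}^{\alpha-4}\ge k^2$ (possible since $\eta_j\to\infty$) and set $\hat y_0:=\sum_k\eta_{j_k}^{-s/2}k^{-1}e_{j_k}\in\mathcal H^s$. For any $t$ with $M(t)\ne0$, since $\eta_{j_k}^4|\phi_{j_k}(t)|^2\ge M(t)^2/2$ for $k\ge k_0$,
\[
\|\varPhi(t)\hat y_0\|_{\mathcal H^{s+\alpha}}^2=\sum_kk^{-2}\eta_{j_k}^{\alpha-4}\big(\eta_{j_k}^2|\phi_{j_k}(t)|\big)^2\ \ge\ \tfrac{M(t)^2}{2}\sum_{k\ge k_0}k^{-2}\eta_{j_k}^{\alpha-4}=+\infty,
\]
so $\varPhi(t)\hat y_0\notin\mathcal H^{s+\alpha}$; as $M$ is real analytic and nonzero, such $t>0$ exist, hence $\varPhi(\cdot)\hat y_0\notin C(\mathbb R^+;\mathcal H^{s+\alpha})$.

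For (ii): if $\varPhi(t)\in\mathcal L(\mathcal H^s,\mathcal H^{s+\alpha_0})$ for all small $t>0$ then $\alpha_0\le4$, for otherwise the $\hat y_0$ constructed above (with $\alpha=\alpha_0$) would lie in $\mathcal H^{s+\alpha_0}$ after applying $\varPhi(t)$ for those $t$, while $\varPhi(t)\hat y_0\notin\mathcal H^{s+\alpha_0}$ for some arbitrarily small $t>0$ (since $M$, being analytic and nonzero, cannot vanish on a neighborhood of $0$), a contradiction. For the lower bound, use $\|\varPhi(t)\|_{\mathcal L(\mathcal H^s,\mathcal H^{s+\alpha_0})}\ge\eta_j^{\alpha_0/2}|\phi_j(t)|$ and $|\phi_j(t)|\ge e^{-\eta_jt}-\mathcal B(t)/\eta_j^2$. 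By the Weyl asymptotics of the Dirichlet eigenvalues on the bounded domain $\Omega$ (which give $\sup_j\eta_{j+1}/\eta_j<\infty$), for each small $t>0$ there is an index $j(t)$ with $1/t\le\eta_{j(t)}\le C/t$; then $e^{-\eta_{j(t)}t}$ is bounded below by a positive constant while $\mathcal B(t)/\eta_{j(t)}^2\le t^2\mathcal B(t)\to0$, whence $\eta_{j(t)}^{\alpha_0/2}|\phi_{j(t)}(t)|\ge c\,t^{-\alpha_0/2}$ for small $t$, that is, $\liminf_{t\to0^+}t^{\alpha_0/2}\|\varPhi(t)\|_{\mathcal L(\mathcal H^s,\mathcal H^{s+\alpha_0})}\ge c>0$.

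The main obstacle is precisely the sharp lower bound in (ii): it genuinely requires information on the spacing of the eigenvalues $\eta_j$ near the scale $1/t$ (Weyl's law); without knowing that they are not too sparse there, the maximizer $\lambda=\alpha_0/(2t)$ of $\lambda\mapsto\lambda^{\alpha_0/2}e^{-\lambda t}$ need not be close to any $\eta_j$ and the estimate degenerates. A secondary, purely computational, point is the bookkeeping in (i): absorbing the $\eta_1$-dependent constants and the polynomial-in-$t$, polynomial-in-$\|M\|_{C^2}$ growth into the single factor $\exp[2(1+t)(1+\|M\|_{C^2([0,t])})]$, with a constant $C_0$ depending only on $\Omega$ and not on $s$, $\alpha$ or $t$.
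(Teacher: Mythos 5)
Your proposal is correct, and for the quantitative parts it takes a genuinely more elementary route than the paper. The paper proves (i) and the lower bound in (ii) by invoking the decomposition \eqref{0423-demcomposition-eq} with $N=2$ (i.e.\ the expansion \eqref{march-phi-N=2-1}) and the remainder estimate \eqref{RN-property-regularity}, which in turn rest on the integration-by-parts machinery of Lemmas \ref{lem-estimate-for-convolution}--\ref{lem-KM-boundary-value}. You instead work directly with the diagonal symbol $\phi_j(t)=w_{\eta_j}(t)=e^{-\eta_j t}+\int_0^t K_M(t,\tau)e^{-\eta_j\tau}\,d\tau$ and the elementary pointwise bound $|K_M(t,s)|\le s\,\mathcal B(t)$ (which does not appear in the paper --- Proposition \ref{prop-KM-regularity} only gives an $L^1$-in-$s$ bound), so that the two-term estimate $|\phi_j(t)|\le e^{-\eta_j t}+\mathcal B(t)\eta_j^{-2}$ and its reverse $|\phi_j(t)|\ge e^{-\eta_j t}-\mathcal B(t)\eta_j^{-2}$ drop out of a one-line computation; this buys a shorter, self-contained proof of \eqref{march-flow-H4-estimate} and of the blow-up rate $t^{-\alpha_0/2}$, at the cost of a slightly less structured view of where each contribution comes from. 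For the remaining parts you converge with the paper: the lower bound in (ii) uses the same Weyl-law covering argument ($\eta_{j+1}/\eta_j\to1$, hence an eigenvalue in $[1/t,C/t]$), and the optimality statements in (ii)--(iii) both reduce to $\eta_j^2|\phi_j(t)|\to|M(t)|$, which you legitimately quote from Theorem \ref{210320-yb-thm-main-high-frequency-behaviors} (proved earlier and independently of Theorem \ref{210320-yb-thm-flow-L2H4}, so there is no circularity); your construction of $\hat y_0$ with a $t$-independent subsequence chosen only from $\eta_{j_k}^{\alpha-4}\ge k^2$ is in fact marginally cleaner than the paper's, which selects the subsequence after fixing $\hat t_0$. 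All the estimates I checked (the iterated-convolution bound, the absorption of $\mathcal B(t)$ into the exponential, the dominated-convergence continuity argument, and the use of isolated zeros of the analytic kernel $M$ near $t=0$) are sound.
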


\begin{remark}\label{remark1.2-2-13}
 Several comments are in order:
\begin{itemize}
\item[(a1)]  Theorem \ref{cor-0423-demcomposition}  gives the decomposition \eqref{0423-demcomposition-eq} of the flow and
besides shows the non-triviality of $\mathcal P_N$ and $\mathcal W_N$: for each $t\geq 0$, there is $N_0(t)\in\mathbb{N}^+$
so that $\mathcal P_N(t)\neq 0$ and $\mathcal W_N(t)\neq 0$ when $N\geq N_0(t)$. Theorem \ref{210320-yb-thm-main-explanations}
explains the functionality of each term in the decomposition \eqref{0423-demcomposition-eq}.
Theorems \ref{210320-yb-thm-main-high-frequency-behaviors}-\ref{210320-yb-thm-flow-L2H4}
are the consequences of the decomposition \eqref{0423-demcomposition-eq}.
The three terms of the decomposition $\mathcal P_N$, $ \mathcal W_N$ and $\mathfrak R_N$ are referred to as the heat-like component, the wave-like component and the remainder,
respectively.
 The first two components are the leading ones. Due to their asymptotic expression, we can clearly identify their nature and this justifies the terminology heat/wave-like respectively.
(This coincides with the expected hybrid nature of the flow.)

\item[(a2)] The proof of Theorem \ref{cor-0423-demcomposition}
 uses a Fourier expansion on the basis of eigenfunctions of $A$ that reduces the problem to consider an ODE with memory depending on the dual parameter  $\eta >0$:
\begin{eqnarray}\label{ode-memory2-15}
w^\prime(t)+\eta w(t) + \int_0^t M(t-s)w(s) ds=0,~~t>0;~~~
w(0)=1.
\end{eqnarray}
The dynamics of this memory-ODE can be decomposed into three terms leading to the three components in the decomposition \eqref{0423-demcomposition-eq}.  The asymptotics of this decomposition for large  $\eta$  yields the main properties of the decomposition \eqref{0423-demcomposition-eq} of the memory-heat equation. A careful analysis of the  flow kernel $K_M$ plays a key role in this analysis.

  \item[(a3)]  In Theorem \ref{210320-yb-thm-main-explanations}, the infinite order regularizing effect of the heat-like behavior of $\mathcal P_N$,   stated  in $(i)$,   is the analog of  the smoothing effect of the semigroup $\{e^{tA}\}_{t\geq 0}$ generated by the heat equation without memory terms;
  The wave-like component  $\mathcal W_N$ exhibits
   the propagation of singularities
along the time-direction, as stated in  $(ii)$;
  The smoothing effect of the remainder $\mathfrak R_N$,   stated in $(iii)$, ensures the gain of $2N+2$ space-derivatives at nonnegative time  but differs from the  infinite order smoothing effect of the heat semigroup $e^{tA}$  at positive time (see Remark \ref{remark-heat-smoothing-effect} for more discussions).
  The conclusion $(iv)$ says, in plain language, that  when $t>0$,
the  singularity of
$\mathcal W_N(t)$ determines the singularity of $\varPhi(t)$, more precisely, the singularity of
the practical leading term in $\mathcal W_N(t)$ determines the singularity of $\varPhi(t)$. Here, we notice  that
 it follows by \eqref{thm-ODE-meomery-asymptotic-estimate-hypobolic} that $h_0(t)\equiv 0$ and $h_1(t)=-M(t)$,
 thus the practical leading term in $\mathcal W_N(t)$ is $-M(t)A^2$ where the coefficient $M(t)$ is not zero
 except for finitely many $t$ by the assumption ($\mathfrak C$).

  From these, we conclude that the decomposition \eqref{0423-demcomposition-eq} in Theorem \ref{cor-0423-demcomposition}
reveals  the hybrid parabolic-hyperbolic behavior of the flow
$\varPhi(t)$.

   \item[(a4)]  Theorem \ref{210320-yb-thm-main-high-frequency-behaviors}
shows
 how the energy of  solutions taking  eigenfunctions of the operator $A$ as  initial data  is distributed
over each component of \eqref{0423-demcomposition-eq}
 at time $t=0$ and  time $t>0$ respectively.
 The conclusion \eqref{yb-march-initial-behaviors-limit} says that
 when $t=0$,
 the energy of $\varPhi(0)e_j$ (which is exactly $e_j$) is almost concentrated in the heat-like component for large $j$,
 while \eqref{210320-yb-positive-behaviors-limit} can be explained as: when $t>0$,
 the energy of $\varPhi(t)e_j$ almost focuses  on the wave-like component for large $j$.
 The first line in \eqref{210320-yb-positive-behaviors-limit}
     is from the term $-M(t)A^{-2}$ (in $\mathcal W_N$) and the order $4$ in $\mathcal{H}^4$ is exactly from the order $-2$ in
     $A^{-2}$. The last line in \eqref{210320-yb-positive-behaviors-limit}
     is from $(iii)$ in Theorem \ref{210320-yb-thm-main-explanations}.

     \item[(a5)]   Theorem \ref{210320-yb-thm-flow-L2H4}  exhibits the finite order smoothing effect of the flow.
      By it, we can see,  from
      the point of view of the smoothing effect,
      both big differences and some
     similarities between the flow $\varPhi(t)$ and the semigroup $e^{tA}$:
     \begin{itemize}
     	\item[] First, on one hand, for any  $y_0\in\mathcal H^s$ (with $s\in \mathbb{R}$),
     	$e^{\cdot A}y_0\in C(\mathbb{R}^+;\mathcal H^k)$ ($\forall\, k\in \mathbb N$),
     	while
     	$\varPhi(\cdot)y_0\in C(\mathbb{R}^+;\mathcal H^{s+4})$ and moreover the index $4$ is optimal,
     	on the other hand,
     	the smoothing effect of the flow $\varPhi(t)$ at points  in the set $\{t>0~:~M(t)=0\}$
     	is better than that at points in the set
     	$\{t>0~:~M(t)\neq 0\}$ (see Remark \ref{remark-yb-202104-special-points} for more details);
     	
     	\item[]  Second,  when $t$ is large,  both  the semigroup $e^{tA}$ and the flow $\varPhi(t)$ are
     	bounded from the above by
     	exponential functions of $t$,
     	while when $t$ is small, they are bounded from the above by
     	the function $t^{-2}$. Moreover,
     	the flow $\varPhi(t)$ is also bounded from the below by
     	$t^{-2}$ when $t\rightarrow 0^+$.
     	Here, by  ``$e^{tA}$/$\varPhi(t)$ is bounded by", we mean
     	``the $\mathcal L(\mathcal H^s,\mathcal H^{s+4})$-norm of $e^{tA}$/$\varPhi(t)$ is bounded by".
     \end{itemize}



\item[(a6)]  It follows from  Theorem \ref{cor-0423-demcomposition} that for each $t\geq 0$,   the wave-like component
$\mathcal W_N(t)$,  as well as the gap between the heat-like component $\mathcal P_N(t)$ and the heat semigroup $e^{tA}$,
are non-trivial, when $N$ is large enough.
  Thus, we may expect that as $N$ increases
  the heat-like  and the wave-like components include an increasing number of terms,
  just like in the Taylor expansion, and the decomposition becomes sharper. (See the example in Section \ref{example}.)

\item[(a7)]  The last component $\mathfrak R_N$  does not fit completely into any of the two previous ones, but  is needed in order to complete the representation of solutions.
We further explain $\mathfrak R_N$ as follows:
\begin{itemize}
\item[$\bullet$] Hybridity. It   inherits a hybrid heat/wave structure from the flow
    (see the note (R2) in Remark \ref{remark5.2}).

	\item[$\bullet$] Smoothing effect. It has the time-uniform smoothing effect  given in ($iii$) of Theorem \ref{210320-yb-thm-main-explanations}. Such time-uniform smoothing effect differs from the usual smoothing effect of the heat semigroup. (This has be  mentioned in (a3)).

\hskip 18pt From Theorems \ref{cor-0423-demcomposition}-\ref{210320-yb-thm-main-explanations}, as well as the example in Section \ref{example}, we can see what follows: First,
 when $t=0$, $\mathfrak R_N(t)$ has a better smoothing effect than that of $\mathcal P_N(t)$, but when $t>0$, this is reversed. (The reason why
  the smoothing effect of $\mathfrak R_N(t)$ (with $t>0$) is weaker than that of $\mathcal P_N(t)$
 is due to wave-like terms contained in $\mathfrak R_N(t)$.)
  Second, when $t>0$,
  both $\mathfrak R_N(t)$ and $\mathcal P_N(t)$ have better smoothing effects than that of $\mathcal W_N(t)$, thus as $t>0$, the singularity of the flow is dominated by the wave-like component and the flow shows its wave-like nature.

	
	\item[$\bullet$] Frequencies. In Fourier analysis, the smoothing effect of a pseudo-differential operator corresponds to the growth of its symbol at high frequencies. The situation here is similar---Theorem \ref{cor-0423-demcomposition} and Theorem \ref{thm-ODE-meomery-asymptotic} corresponds to
each other and the smoothing effects of the components in \eqref{0423-demcomposition-eq}
 correspond to respectively the growths (in large $\eta$) of the  components in \eqref{thm-ODE-meomery-asymptotic-estimate}.
  From this and
   \eqref{thm-ODE-meomery-asymptotic-estimate}, we can see
   that when $t=0$, the last component in \eqref{thm-ODE-meomery-asymptotic-estimate} (It corresponds to $\mathfrak R_N$.) has a faster decay (in large $\eta$)  than  the first two components in \eqref{thm-ODE-meomery-asymptotic-estimate}, while when $t>0$, both the last one and the first one have faster decay than the second one.

	
	\item[$\bullet$] Non-triviality. Since the component $\mathfrak R_N$
has a hybrid heat/wave structure as mentioned above (In particular, we would like to mention
that it is not a finite dimensional low frequency operator.)  it is not
 negligible.
\end{itemize}


\item [(a8)]  We require $N\geq2$ in Theorem \ref{cor-0423-demcomposition}
since  $\mathcal W_1(\cdot) \equiv 0$ (when $N=1$).

\item[(a9)] Notice that both $\mathcal P_N(0)$ and $\mathcal W_N(0)$ are not projection operators in general and that
\begin{align*}
y_0 =  \mathcal P_N(0)y_0  +  \mathcal W_N(0) y_0\;\;\mbox{for each}\;\; y_0 \in L^2(\Omega)
~~\text{and}~~
\mathfrak R_N(0)=0.
\end{align*}
This is  further discussed in Proposition \ref{prop-PW-nonprojections}.

\item[(a10)] All the results of Theorems \ref{cor-0423-demcomposition}-\ref{210320-yb-thm-main-explanations}, except
\eqref{2021-april-hl-pl-nontrivial},
\eqref{yb-202104-wn-propagation} and
\eqref{202104-varphi-wn-y0},
 hold under the weaker  assumption $M\in C^\infty(\overline{ \mathbb R^+ })$. In Theorem \ref{thm-decomposition-extension} below we analyse the case of kernels $M$ in $C^{N_0}(\overline{ \mathbb R^+ })$, with $N_0\geq 2$.
The assumption  ($\mathfrak C$)  ensures \eqref{2021-april-hl-pl-nontrivial},
\eqref{yb-202104-wn-propagation} and
\eqref{202104-varphi-wn-y0}. Whether the same holds under weaker conditions on the kernel is an open problem, see Section \ref{sec-open-problems}.

\item[(a11)] The  decomposition \eqref{0423-demcomposition-eq}  has  applications in control theory. It allows, in particular, to compare the reachable sets for the controlled
 heat equations with and without memory term. We refer to \cite{WZZ-2} for a complete analysis of this issue.

\item[(a12)] There is  a large body of literature on the large time dynamics of
memory like problems (see, for instance, \cite{Dafermos,Farbizio-Gogi-Pata}) which is surely an important direction. Unfortunately, we are not able to use our decomposition to get such results.

\end{itemize}

\end{remark}

\subsection{Plan of the paper}
The rest of the paper is organized as follows. In Section 2 we analyze  the flow kernel $K_M$. In Section 3 we present a decomposition for  solutions to the ODE \eqref{ode-memory2-15}. Section 4 contains the proofs
of  Theorems \ref{cor-0423-demcomposition}-\ref{210320-yb-thm-flow-L2H4},
   and provides some other properties of the flow. In Section 5 we discuss, as an example, the case of the kernel $ M(t)=\alpha e^{\lambda t}$.  Section 6 lists several open problems. Section 7 contains an appendix.

\section{Properties of the flow kernel }

In this section we present some properties of the flow kernel $K_M$ in \eqref{new-kernel-KM}, which will be used later.

 In what follows, the space $C^k([a,b])$ (with $k\in \mathbb N^+$ and  $a<b$) is endowed with the  norm:
\begin{eqnarray*}
 \|f\|_{C^k([a,b])} := \sum_{l=0}^k  \Big\| \frac{d^l f}{dx^l} \Big\|_{C([a,b])},~~f\in C^k([a,b]).
\end{eqnarray*}

The following result provides basic estimates on iterated convolutions that will be used in the proof of Proposition \ref{prop-KM-Ck-regularity}. Its proof is put in the appendix.

\begin{lemma}\label{lem-convolution-estimates}
	Let $j,m\in \mathbb N^+$. Then for each sequence  $\{M_l\}_{l=1}^j\subset C^m(\overline{\mathbb R^+})$,   $M_1 * \cdots * M_j$ is in the space $C^{m+j-1}(\overline{\mathbb R^+})$
and satisfies that for each $k\in \{0,1,\ldots,m+j-1\}$,	
	\begin{align}\label{convolution-two-ineqs}
	\Big| \frac{d^k}{dt^k} M_1 * \cdots * M_j (t) \Big|
	\leq  \left( \sum_{l=\max\{0,j-1-k\}}^{j-1}  \frac{t^l}{l!}
	\right)
	\displaystyle\prod_{l=1}^j \| M_l \|_{C^{p}([0,t])},~~
		t> 0,
	\end{align}
	where $p:=\chi_{\mathbb N }(k-1)\big[(k-j)\chi_{\mathbb N }(k-j)+1\big]$ and $\chi_{\mathbb N }$ is the characteristic function of the set $\mathbb N $.
\end{lemma}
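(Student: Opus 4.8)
The plan is to proceed by induction on the number $j$ of factors, using the elementary fact that differentiation interacts with a single convolution through the Leibniz-type rule $\frac{d}{dt}(f*g)(t) = f(0)g(t) + (f'*g)(t) = g(0)f(t) + (f*g')(t)$, valid for $f,g\in C^1(\overline{\mathbb R^+})$. For the base case $j=1$ there is nothing to prove: $M_1\in C^m$, the sum on the right of \eqref{convolution-two-ineqs} reduces to the single term $l=0$ (since $\max\{0,1-1-k\}=0$ and the upper index is $j-1=0$), giving $|M_1^{(k)}(t)|\le \|M_1\|_{C^p([0,t])}$, which holds because $p\ge 0$ whenever $k\le m$. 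For the inductive step I would write $M_1*\cdots*M_{j} = (M_1*\cdots*M_{j-1})*M_j$, set $G:=M_1*\cdots*M_{j-1}$, which by the inductive hypothesis lies in $C^{m+j-2}(\overline{\mathbb R^+})$, and then show $G*M_j\in C^{m+j-1}$ by repeatedly applying the one-convolution differentiation rule: each time a derivative is distributed onto $G$ or onto $M_j$, a boundary term $G^{(\cdot)}(0)$ or $M_j^{(\cdot)}(0)$ is produced which lowers the total smoothness budget by one but is itself a $C^{(\cdot)}$ function, and the surviving convolution term $G^{(a)}*M_j^{(b)}$ can absorb one further derivative. Careful bookkeeping of how many derivatives land on which factor, and of the $t^l/l!$ weights picked up from iterated integration $\int_0^t \cdots\,ds$, yields the stated sum $\sum_{l=\max\{0,j-1-k\}}^{j-1} t^l/l!$.

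The derivative count is the delicate part, so I would track it explicitly. Writing $\frac{d^k}{dt^k}(G*M_j)$ via the Leibniz rule for convolutions, one obtains a sum of terms of the form $G^{(a)}(0)\,M_j^{(k-1-a)}(t)$ (and symmetric ones), plus one genuine convolution $\big(G^{(k_1)}*M_j^{(k_2)}\big)(t)$ with $k_1+k_2=k$; each "pure boundary" contribution needs $G^{(a)}$ to make sense, i.e. $a\le m+j-2$, and the convolution contribution needs $k_1\le m+j-2$, $k_2\le m$. Since $k\le m+j-1$, these constraints are always satisfiable, which gives the claimed regularity $C^{m+j-1}$. For the quantitative bound, I would insert the inductive estimate for $|G^{(k_1)}(t)|$, namely $\big(\sum_{l=\max\{0,j-2-k_1\}}^{j-2} t^l/l!\big)\prod_{l=1}^{j-1}\|M_l\|_{C^{p'}([0,t])}$, bound $|M_j^{(k_2)}(t)|\le \|M_j\|_{C^{p}([0,t])}$, estimate the convolution by $\int_0^t(\cdots)\,ds$ which raises each power $t^l/l!$ to $t^{l+1}/(l+1)!$, bound all $C([0,t])$-norms of lower-order derivatives of $M_j$ appearing in boundary terms by $\|M_j\|_{C^p([0,t])}$ (monotonicity of $t\mapsto \|f\|_{C^p([0,t])}$), and finally check that the exponent $p$ one is forced to use for each factor never exceeds $\chi_{\mathbb N}(k-1)\big[(k-j)\chi_{\mathbb N}(k-j)+1\big]$. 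This last verification — reconciling the recursion $p'\mapsto p$ across the induction with the closed-form expression for $p$ given in the statement — is what I expect to be the main obstacle, since the formula for $p$ is engineered precisely so that it is stable under the step $j\mapsto j+1$ and $k\mapsto k$, and one must treat separately the regimes $k=0$, $1\le k\le j$, and $k>j$ to see the characteristic functions behave correctly.

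Once the regularity and the pointwise inequality are in hand for all $k\in\{0,\dots,m+j-1\}$, the lemma follows. I would organize the write-up so that the combinatorial identity for the number of derivatives routed to each factor (a discrete convolution/Vandermonde-type count) and the telescoping of the $t^l/l!$ weights under integration are stated as two short sublemmas, keeping the induction itself clean. No sophisticated tools are needed beyond the convolution Leibniz rule, Fubini (to iterate the convolutions in any order), and monotonicity of the truncated sup-norms; the entire difficulty is the careful bookkeeping of the smoothness index $p$, and I would double-check the edge cases $k=m+j-1$ (where the budget is tight and every boundary term sits exactly at the top of its allowed range) and $k\le j-2$ (where $\max\{0,j-1-k\}>0$ and the sum genuinely starts above $l=0$).
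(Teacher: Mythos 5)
Your plan inverts the paper's induction: you induct on the number of factors $j$ (with the derivative order $k$ generic), whereas the paper inducts on $k$ (with $j$ generic), writing the statement as a family $(\mathcal E_k)$ over all $j$. Both routes hinge on the same elementary identity $\frac{d}{dt}(f*g)=f(0)g+f'*g$, so the underlying idea is the same; what differs is which index drives the recursion. The paper's choice has a structural advantage you should be aware of: differentiating once more produces exactly two terms, $M_1(0)\,\frac{d^{k_0}}{dt^{k_0}}(M_2*\cdots*M_j)$ and $\frac{d^{k_0}}{dt^{k_0}}(M_2*\cdots*M_j*M_1')$, and the resulting products of norms can be recombined via the exact identity $\|M_1\|_{C([0,t])}+\|M_1'\|_{C^{p'}([0,t])}=\|M_1\|_{C^{p'+1}([0,t])}$, together with the monotonicity $\max\{\mathcal C(j-1,k_0,t),\mathcal C(j,k_0,t)\}\le\mathcal C(j,k_0+1,t)$, to land exactly on the advertised constant. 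Your route, peeling off one factor and expanding $\frac{d^k}{dt^k}(G*M_j)$ by the $k$-fold Leibniz rule, produces a whole sum of boundary terms $G^{(a)}(0)M_j^{(k-1-a)}(t)$ plus one convolution, and if you simply bound each boundary term by $\|M_j\|_{C^p}\,\|M_l\|_{C^p}$ you will overshoot the constant (one gets roughly a factor $k$ more than $\sum_{l} t^l/l!$). To recover the stated bound along your route you must use two facts that your sketch does not mention explicitly: first, $G=M_1*\cdots*M_{j-1}$ vanishes to order $j-2$ at $0$, so $G^{(a)}(0)=0$ for $a\le j-3$, which kills most boundary terms and is precisely what produces the lower limit $\max\{0,j-1-k\}$ in the $t^l/l!$-sum; second, the surviving norms must be recombined using the additive structure of $\|\cdot\|_{C^p}$ rather than crudely majorizing each summand. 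So your approach is viable and genuinely different, but it is not easier than the paper's — it replaces the paper's one-derivative recursion (two terms, one exact norm identity) with a full Leibniz expansion, and the key cancellations need to be made explicit. I would also flag one small gap in the regularity claim as you phrase it: for $k=m+j-1$ you cannot place all $k$ derivatives on $G$ (since $G\in C^{m+j-2}$ only), so the split $k_1+k_2=k$ with $k_2\ge 1$ must be chosen consciously in the edge case, not just asserted to be "always satisfiable."
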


The following Proposition
 \ref{prop-KM-Ck-regularity}  provides estimates on the derivatives of the flow kernel $K_M$ that will serve for the proof of Theorem \ref{thm-ODE-meomery-asymptotic}, which is one of the
 tools  in the proof of the decomposition  in
 Theorem \ref{cor-0423-demcomposition}.

\begin{proposition}\label{prop-KM-Ck-regularity}
		The flow kernel $K_M\in C^\infty(S_+)$  (where $S_+$ is given in \eqref{new-kernel-KM})
satisfies that for 	each $\alpha,\beta\in \mathbb N$,
	\begin{align}\label{zhang-KM-deriavties-estimates}
	\big| \partial_t^\alpha \partial_s^\beta K_M(t,s) \big|
	\leq  e^{t-s} \bigg[
	\exp\Big( \beta \big( 1+|s| \big) \|M\|_{ C^{\alpha+\beta }([0,t-s]) } \Big)
	-1 \bigg],~~
		t>s.
	\end{align}
	\end{proposition}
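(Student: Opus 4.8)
The plan is to work directly from the series definition \eqref{new-kernel-KM} of $K_M$, differentiate term by term, and estimate each term using the iterated-convolution bounds from Lemma \ref{lem-convolution-estimates}; the exponential-type right-hand side in \eqref{zhang-KM-deriavties-estimates} should then emerge from summing a series that, after the convolution estimates are inserted, looks like the Taylor series of an exponential. First I would fix $(t,s)\in S_+$ with $t>s$ and write $K_M(t,s)=\sum_{j\ge 1} \frac{(-s)^j}{j!}\, (M^{*j})(t-s)$, where $M^{*j}$ denotes the $j$-fold convolution. To differentiate in $t$ and $s$, I note that $t$ enters only through the argument $t-s$ of $M^{*j}$, while $s$ enters both through the polynomial factor $(-s)^j/j!$ and through the argument $t-s$. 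So $\partial_t^\alpha$ acts as $\alpha$ derivatives on $M^{*j}$, and $\partial_s^\beta$ is handled by the Leibniz rule applied to the product of $(-s)^j/j!$ with $s\mapsto (M^{*j})(t-s)$, producing a finite sum over the split $\beta = \beta_1+\beta_2$, where $\beta_1$ derivatives hit the polynomial (killing it unless $\beta_1\le j$, and producing a factor like $\binom{j}{\beta_1}|s|^{j-\beta_1}$ up to constants) and $\beta_2$ derivatives hit the convolution (contributing $(-1)^{\beta_2}(M^{*j})^{(\beta_2)}(t-s)$, with the total $t$-plus-$s$ order of differentiation on $M^{*j}$ being at most $\alpha+\beta$).

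Next I would insert Lemma \ref{lem-convolution-estimates}: each $(M^{*j})^{(k)}(t-s)$, for $k\le \alpha+\beta$, is bounded by $\big(\sum_{l} \frac{(t-s)^l}{l!}\big)\|M\|_{C^{p}([0,t-s])}^{\,j}\le e^{t-s}\,\|M\|_{C^{\alpha+\beta}([0,t-s])}^{\,j}$, where I have crudely bounded the finite sum of $(t-s)^l/l!$ by $e^{t-s}$ and the $C^p$-norm by the $C^{\alpha+\beta}$-norm (since $p\le \alpha+\beta$ for the relevant $k$; this monotonicity of $C^m$-norms in $m$ is the only structural fact about $p$ I need). This turns the bound on the generic term into something of the shape $e^{t-s}\,\frac{|s|^{j-\beta_1}}{(j-\beta_1)!}\,(\text{combinatorial factor})\,\|M\|_{C^{\alpha+\beta}([0,t-s])}^{\,j}$, where I must be careful to track the combinatorial factors coming from the Leibniz rule and from $\frac{d^{\beta_1}}{ds^{\beta_1}}\frac{(-s)^j}{j!}=\pm\frac{(-s)^{j-\beta_1}}{(j-\beta_1)!}$; the key point is that summing over $j$ (and over the at most $\beta+1$ choices of $\beta_1$) a quantity proportional to $\frac{\big(|s|\,\|M\|_{C^{\alpha+\beta}}\big)^{j-\beta_1}}{(j-\beta_1)!}\,\|M\|_{C^{\alpha+\beta}}^{\beta_1}$ gives, after factoring, something controlled by $\exp\big(\beta(1+|s|)\|M\|_{C^{\alpha+\beta}([0,t-s])}\big)$. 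The ``$-1$'' on the right-hand side reflects that the $j=0$ term is absent from the series (the sum starts at $j=1$), so one subtracts the constant term of the exponential; I would make this precise by bounding $\sum_{j\ge 1}(\cdots)$ by $\exp(\cdots)-1$ after checking the $j\ge 1$ terms line up with the Taylor coefficients of the exponential minus its zeroth term, possibly grouping the $|s|$-dependence and the pure-$\|M\|$ dependence and using $e^a e^b \le e^{a+b}$ and the factor $\beta$ to absorb the number of Leibniz terms.

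The smoothness claim $K_M\in C^\infty(S_+)$ I would get as a byproduct: the term-by-term differentiated series converges locally uniformly on $S_+$ by exactly the estimate just derived (which is finite for every $(t,s)$ with $t\ge s$ and depends continuously on $(t,s)$), so Proposition \ref{prop-KM-analytic}'s well-definedness together with a standard uniform-convergence-of-derivatives argument upgrades this to $C^\infty$; alternatively one simply invokes the real-analyticity already asserted in Proposition \ref{prop-KM-analytic}. The main obstacle I anticipate is bookkeeping: getting the combinatorial constants from the Leibniz expansion (the $\binom{\beta}{\beta_2}$ factors, the $\binom{j}{\beta_1}$ from differentiating $(-s)^j$, and the $\frac{1}{j!}$) to collapse cleanly into the clean bound $\beta(1+|s|)\|M\|_{C^{\alpha+\beta}}$ in the exponent, rather than into something with an extra $\beta!$ or an extra $2^\beta$. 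The trick will be to not expand $(1+|s|)^\beta$-type quantities but instead to recognize that $\sum_{\beta_1\le \beta}\binom{\beta}{\beta_1}|s|^{\beta_1}(\cdots)$ naturally reassembles, after the shift $j\mapsto j-\beta_1$ in the summation index, into a product of two exponential series — one in $|s|\|M\|$ and one in $\|M\|$ — whose product is $\le \exp\big((1+|s|)\|M\|\big)$, and then the outer factor $\beta$ absorbs whatever linear-in-$\beta$ overcounting remains; checking that the exponent really only needs the single power $\beta$ (not $\beta^2$ or worse) is the delicate part and may require choosing the split of the Leibniz sum judiciously or strengthening the induction hypothesis slightly.
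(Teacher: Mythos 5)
Your proposal follows essentially the same route as the paper's proof: expand $K_M$ term by term as $\sum_{j\ge1}\frac{(-s)^j}{j!}M^{*j}(t-s)$, compute $\partial_t^\alpha\partial_s^\beta$ of each term via the Leibniz rule, insert the iterated-convolution bound from Lemma \ref{lem-convolution-estimates} (crudely replacing the $C^p$-norm by the $C^{\alpha+\beta}$-norm and the finite $\sum t^l/l!$ by $e^{t-s}$), and then sum over $j$ to obtain $\exp(\cdot)-1$. The only place where your sketch wavers — the combinatorial bookkeeping and whether the exponent picks up a linear rather than quadratic dependence on $\beta$ — is handled in the paper by the blunt but effective bound $C_\beta^m\, m! = \beta(\beta-1)\cdots(\beta-m+1) \le \beta^m \le \beta^j$ together with $\sum_{m=0}^j C_j^m |s|^{j-m}=(1+|s|)^j$, which packages the $j$-th term directly as $\frac{e^{t-s}}{j!}\big(\beta(1+|s|)\|M\|_{C^{\alpha+\beta}}\big)^j$; your proposed alternative (re-indexing $j\mapsto j-\beta_1$ and recognizing a product of two exponential series) would also succeed but is not what the paper does. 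One small caution: you suggest invoking Proposition \ref{prop-KM-analytic} as a shortcut to smoothness, but in the paper that proposition is stated and proved \emph{after} the present one, so the paper instead derives $K_M\in C^\infty(S_+)$ from the locally uniform convergence of the differentiated series — exactly the fallback you also mention, and the one you should use to respect the logical order.
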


\begin{proof}
	First of all, it follows from (\ref{new-kernel-KM}) that
	\begin{align}\label{KM-Mj-repsentation}
	K_M(t,s) = \sum_{j=1}^{+\infty}  \mathcal M_j(t,s),~~t\geq s,
	\end{align}
	where
	\begin{align}\label{def-Mj}
	\mathcal M_j(t,s) := \frac{(-s)^j}{j!} \underset{j}{ \underbrace{M * \cdots*  M} } (t-s),~~t\geq s.
	\end{align}

Next, we prove  $K_M\in C^\infty( S_+)$ showing
the convergence in $C^\infty(S_+)$ of the series on the right-hand side of  \eqref{KM-Mj-repsentation}.

To this end, we will estimate $\mathcal M_j$
	with $j\in\mathbb N^+$:  By the assumption $(\mathfrak C)$ and Lemma \ref{lem-convolution-estimates},
 we see that $\underset{j}{ \underbrace{M * \cdots*  M} }$ belongs to $C^\infty(\overline{\mathbb R^+})$
 and satisfies, for each  $k\in \mathbb N$,
\begin{align}\label{M-j-diff-estimate-ok}
	\Big| \frac{ d^{k} }{ d\tau^{k} } \underset{j}{ \underbrace{M * \cdots*  M} }  (\tau)	\Big|
	\leq e^{\tau} \|M\|_{ C^{k}([0,\tau]) }^j,~~
\tau>0.
\end{align}
 From \eqref{def-Mj} it follows that $\mathcal M_j \in C^\infty(S_+)$.

By direct computations,  for $\alpha,\beta\in \mathbb N$, we have
		$t\geq s$,
	\begin{align}\label{Mj-alpha-beta-estimate}
	\partial_t^\alpha \partial_s^\beta \mathcal M_j(t,s)
	=& \partial_s^\beta \partial_t^\alpha \mathcal M_j(t,s)
\nonumber\\
	=&	\frac{d^\beta}{ds^\beta} \bigg[
	\frac{(-s)^j}{j!} \Big( \frac{d^\alpha}{d\tau^\alpha} \underset{j}{ \underbrace{M * \cdots*  M} }(\tau) \Big)\big|_{\tau=t-s}
	\bigg]
	\nonumber\\
	=& \sum_{m=0}^\beta C_\beta^m   \frac{d^m}{d\tau^m}\Big(  \frac{(-s)^j}{j!} \Big)
	(-1)^{\beta-m} \Big( \frac{d^{\beta-m}}{d\tau^{\beta-m}} \frac{d^\alpha}{d\tau^\alpha} \underset{j}{ \underbrace{M * \cdots*  M} }(\tau) \Big)\big|_{\tau=t-s}
	\nonumber\\
	=&   \frac{ (-1)^{\beta} }{j!} \sum_{m=0}^{ \min\{\beta,j\} }
	C_\beta^m   (C_j^m m!) (-s)^{j-m}
	\Big( \frac{ d^{\alpha+\beta-m} }{ d\tau^{\alpha+\beta-m} } \underset{j}{ \underbrace{M * \cdots*  M} }(\tau) \Big)\big|_{\tau=t-s}.
	\end{align}
Here and in what follows, we
use the conventional notation  $0^0:=1$.
By
 (\ref{Mj-alpha-beta-estimate}) and \eqref{M-j-diff-estimate-ok}, one has that, when $t> s$,
	\begin{align}\label{Mj-alpha-beta-estimate-1}
	 \Big| \partial_t^\alpha \partial_s^\beta \mathcal M_j(t,s) \Big|
	\leq& \frac{1}{j!} \sum_{m=0}^{ j }
	(C_\beta^m m!) C_j^m   |s|^{j-m}
\Big( 	e^{t-s} \|M\|_{ C^{\alpha+\beta }([0,t-s]) }^j
\Big)
	\nonumber\\
	\leq& \frac{1}{j!} \beta^j
	\left( \sum_{m=0}^{ j }
	C_j^m    |s|^{j-m}   \right)
	e^{t-s} \|M\|_{ C^{\alpha+\beta }([0,t-s]) }^j
\nonumber\\
	=& \frac{ e^{t-s} }{j!}   \bigg(  \beta (1+|s|) \|M\|_{ C^{\alpha+\beta }([0,t-s]) } \bigg)^j.
	\end{align}
		Now,  by (\ref{Mj-alpha-beta-estimate-1}) it follows that
		the series in (\ref{KM-Mj-repsentation}) converges in
		$C^\infty(S_+)$.

Finally,  by (\ref{KM-Mj-repsentation}) and (\ref{Mj-alpha-beta-estimate-1}), after  direct computations, we see that  when
		$t>s$,
	\begin{align*}
	\Big| \partial_t^\alpha \partial_s^\beta K_M(t,s) \Big|
	\leq& \sum_{j=1}^\infty  \Big| \partial_t^\alpha \partial_s^\beta \mathcal M_j(t,s) \Big|
	\leq e^{t-s} \bigg[
	\exp\Big( \beta(1+|s| ) \|M\|_{ C^{\alpha+\beta }([0,t-s]) } \Big)
	-1 \bigg].
	\end{align*}
	This gives the desired estimate \eqref{zhang-KM-deriavties-estimates}
and ends the proof of Proposition \ref{prop-KM-Ck-regularity}.	
\end{proof}

The next Proposition \ref{prop-KM-analytic} concerns the  analyticity of the flow kernel $K_M$.
It will be used in the proofs of Proposition \ref{prop-KM-strong-unique}  and  Proposition
\ref{cor-evolution-high-regularity}.

\begin{proposition}\label{prop-KM-analytic}
	The flow kernel  $K_M$
	is real analytic on $S_+$
(where $S_+$ is given in \eqref{new-kernel-KM}).

\end{proposition}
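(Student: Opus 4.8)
The plan is to construct an explicit holomorphic extension of $K_M$ to a complex neighbourhood of $S_+$ in $\mathbb C^2$; real analyticity on $S_+$ then follows at once. It is convenient to work after the affine change of variables $(t,s)\mapsto(s,u)$ with $u:=t-s$, under which $S_+$ becomes $\mathbb R\times\overline{\mathbb R^+}$ and, by \eqref{KM-Mj-repsentation}--\eqref{def-Mj}, the flow kernel $K_M$ corresponds to
\[
\widetilde K(s,u):=\sum_{j=1}^{+\infty}\frac{(-s)^j}{j!}\,\big(\underset{j}{\underbrace{M*\cdots*M}}\big)(u).
\]
Since an affine change of variables preserves real (and complex) analyticity, it suffices to prove that $\widetilde K$ extends holomorphically to a neighbourhood of $\mathbb R\times\overline{\mathbb R^+}$ in $\mathbb C^2$.

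First I would fix an arbitrary compact $\mathcal K\subset S_+$ and pick $T>0$, $R>0$ with $\mathcal K\subset\{(t,s):0\le t-s\le T,\ |s|\le R\}$. By assumption $(\mathfrak C)$, $M$ extends to a function holomorphic on, and bounded (say by $C$) on, the convex ``sausage'' neighbourhood $\mathcal O:=[0,T]+\{w\in\mathbb C:|w|<\delta\}$ of $[0,T]$, for suitable $\delta,C>0$ depending on $T$ (here one uses that $\mathcal O$, being a Minkowski sum of two convex sets, is convex and contains $0$). Then for every $u$ in the open set $\mathcal U:=\{u\in\mathbb C:[0,u]\subset\mathcal O\}$ —which contains the real interval $[0,T]$— the iterated convolutions $\big(\underset{j}{\underbrace{M*\cdots*M}}\big)(u)$, defined by integration along the segment $[0,u]$, are well defined and holomorphic in $u$, they agree with the usual convolutions when $u\in[0,T]$, and an induction on $j$ (parametrising the segment integral over the standard simplex) gives
\[
\Big|\big(\underset{j}{\underbrace{M*\cdots*M}}\big)(u)\Big|\le\frac{C^{\,j}\,|u|^{\,j-1}}{(j-1)!},\qquad u\in\mathcal U,\quad j\ge1.
\]

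Next I would assemble the series on the tube $\mathcal V:=\{s\in\mathbb C:|s|<R+1\}\times\mathcal U$. Each summand $(s,u)\mapsto\frac{(-s)^j}{j!}\big(\underset{j}{\underbrace{M*\cdots*M}}\big)(u)$ is holomorphic in $(s,u)$ jointly on $\mathcal V$, and on any closed polydisc $\{|s|\le\rho\}\times\{|u|\le\rho\}\subset\mathcal V$ the estimate above yields $\big|\frac{(-s)^j}{j!}(\cdots)(u)\big|\le (C\rho)^{j}\rho^{\,j-1}/\!\big(j!\,(j-1)!\big)$, which is summable in $j$. Hence the series for $\widetilde K$ converges uniformly on compact subsets of $\mathcal V$, and by the Weierstrass (or Morera) theorem its sum is holomorphic on $\mathcal V$. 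Since $\mathcal V$ is a complex neighbourhood of $\{|s|\le R\}\times[0,T]$, it contains the image of $\mathcal K$ under $(t,s)\mapsto(s,u)$; undoing the change of variables shows that $K_M$ extends holomorphically near $\mathcal K$. As $\mathcal K$ was an arbitrary compact subset of $S_+$, $K_M$ is real analytic on $S_+$ (and, in fact, extends holomorphically to a complex neighbourhood of $S_+$ in $\mathbb C^2$).

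The step I expect to be the main obstacle is the correct set-up of the complex convolution, rather than the convergence, which is routine once the bound above is available: one must ensure that the complex domain of holomorphy supplied by $(\mathfrak C)$ contains every segment $[0,u]$ entering the definition of $\big(\underset{j}{\underbrace{M*\cdots*M}}\big)(u)$, and that the constants $C,\delta$ are uniform over the compact set at hand — this is precisely why one restricts to the convex neighbourhoods $\mathcal O$ of the finite intervals $[0,T]$ rather than working with an unspecified open complex neighbourhood of $[0,+\infty)$ (whose ``width'' may degenerate at infinity and which need not be convex). An alternative, purely real, route would be to upgrade the $C^\infty$-estimate of Proposition~\ref{prop-KM-Ck-regularity} to a Gevrey-$1$ bound $|\partial_t^\alpha\partial_s^\beta K_M(t,s)|\le \widetilde C\,\alpha!\,\beta!/\widetilde R^{\,\alpha+\beta}$ on compact subsets of $S_+$, using the Cauchy estimates $\|d^k M/dt^k\|_{C([0,T])}\le C\,k!/R^{k}$ guaranteed by $(\mathfrak C)$; but \eqref{zhang-KM-deriavties-estimates} as stated is too lossy for this and the $\alpha,\beta$-dependence would have to be tracked much more carefully, so the holomorphic-extension argument above is both shorter and more transparent.
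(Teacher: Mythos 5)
Your proof is correct and takes essentially the same route as the paper: both extend $M$ holomorphically to a bounded convex complex neighbourhood of $[0,T]$, extend the iterated convolutions via integration along the segment $[0,z]$ (your simplex parametrization is exactly the paper's $\tilde*$ with $f\tilde*g(z)=\int_0^1 f((1-\sigma)z)g(\sigma z)z\,d\sigma$), establish a bound of the form $C^j$ on these extensions, and conclude by Weierstrass convergence. The only cosmetic difference is that you pass to the variables $(s,u)=(s,t-s)$, whereas the paper stays in $(t,s)$ and works on the strip $\mathcal D_T=\{(t,s)\in\mathbb C^2: t-s\in\mathcal O_T\}$; your explicit bound $C^j|u|^{j-1}/(j-1)!$ is sharper than the paper's stated $C^j$ but both suffice.
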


\begin{proof}
	It suffices to prove that $K_M$ is real analytic over $S_T$ for each $T>0$, where
	\begin{align*}
	  S_T := \big\{ (t,s)\in \mathbb R^2~:~0\leq t-s \leq T \big\}.
	\end{align*}	
	To this end, we  fix an arbitrary $T>0$. Due to the analyticity of $M$ over $\overline{\mathbb R^+}$, there is a  domain $\widetilde{\mathcal O_T}$
of the complex plane $\mathbb C$,
with  $[0,T]\subset \widetilde{ \mathcal O_T }\subset\mathbb C$, so that
	$M$ has a unique analytic extension $\widetilde M$ to $\widetilde{\mathcal O_T}$.
	Moreover, we can take  a bounded and convex subdomain $ \mathcal O_T $ so that
	$[0,T]\subset \mathcal O_T\subset\subset \widetilde{ \mathcal O_T}$.
	The convolution  $*$ can then be extended to $\tilde{*}$
	in the following manner for $f,g\in C(\mathcal O_T;\mathbb C)$,
	\begin{eqnarray*}
		f \tilde{*} g(z) := \int_0^1 f((1-s)z)g(sz) z ds
		,~z\in \mathcal O_T.
	\end{eqnarray*}
	We now claim the following two properties:
	\begin{itemize}
		\item[]
(P1) For each $j\in\mathbb N^+$, $\underset{j}{ \underbrace{\widetilde M \tilde* \cdots \tilde* \widetilde M} }$ is an analytic extension of  $\underset{j}{ \underbrace{ M * \cdots *  M} }$ over $\mathcal O_T$;
		\item[] (P2) There is $C>0$ so that
		\begin{eqnarray*}
			\sup_{z\in \mathcal O_T}
			| \underset{j}{ \underbrace{\widetilde M \tilde* \cdots \tilde* \widetilde M} }(z) |
			\leq C^j\;\;\mbox{for all}\;\;j\in\mathbb N^+.
		\end{eqnarray*}
	\end{itemize}
	Indeed, one has
$$
	f\tilde{*}g|_{[0,T]}=f|_{[0,T]}* g|_{[0,T]}, \;\;\mbox{when}\;\;f,g\in C(\mathcal O_T;\mathbb C),
	$$
Here, $f\tilde{*}g|_{[0,T]}$,
$f|_{[0,T]}$ and $g|_{[0,T]}$
are respectively the restrictions of
 $f\tilde{*}g$, $f$ and $g$ over $[0,T]$.
Then, property (P1) follows from  the analyticity of $M$ at once, while the property (P2) can be proved by direct computations.
	
	Define the following subset of $\mathbb C^2$:
	\begin{align*}
	\mathcal D_T := \Big\{(t,s)\in \mathbb C^2
	~:~
	t-s \in \mathcal O_T \Big\}.
	\end{align*}
	It is clear that  $S_T\subset\mathcal D_T$. According to (P2) above, the following series
	uniformly converges over each compact subset of $\mathcal D_T$:
	\begin{align*}
	\sum_{j\geq 1} \frac{ (-s)^j }{ j! }  \underset{j}{ \underbrace{\widetilde M \tilde* \cdots \tilde* \widetilde M} } (t-s),~~(t,s) \in \mathcal D_T.
	\end{align*}
	Meanwhile, by (P1), we find that each term in the above series is analytic over $\mathcal D_T$.
	Hence,  the sum of this series is analytic over $\mathcal D_T$. From this and (\ref{new-kernel-KM}), we see
 that $K_M|_{S_T}$ (the restriction of $K_M$ over $S_T$) can be analytically extended  to $\mathcal D_T$. Therefore, it is real analytic over $S_T$. This ends the proof of Proposition \ref{prop-KM-analytic}.	
\end{proof}

The next Proposition \ref{prop-KM-strong-unique}  can be interpreted as a strong unique continuation  property or non-degeneracy of the kernel $K_M$, that  will be used in the proof of  Theorem \ref{thm-ODE-meomery-asymptotic}.

\begin{proposition}\label{prop-KM-strong-unique}
For each   $(t_0,s_0,v_1,v_2) \in 	(S_+ \times \mathbb S^1) \setminus I$, where	
\begin{align*}
I:= \Big\{ (t,t,\tau,\tau) ~:~ t\in \mathbb R,~\tau = \pm  1/\sqrt{2}  \Big\}
\cup
\Big\{ (t,0,\tau,0) ~:~ t \geq 0,~\tau=\pm 1 \Big\},
\end{align*}
with $\partial_{\vec{v}}:=v_1\partial_t+v_2\partial_s$ where  $\vec{v}:=(v_1,v_2)$, it holds that
\begin{align}\label{strong-v-l-zero}
\partial_{ \vec{v} }^l K_M(t_0,s_0)\neq0
\;\;\mbox{for some}\;\;
l\in \mathbb N.
\end{align}
\end{proposition}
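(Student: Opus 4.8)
The plan is to deduce the statement from the real‑analyticity of $K_M$ (Proposition~\ref{prop-KM-analytic}) together with a factorization of $K_M$ near the diagonal $\{t=s\}$, which reduces everything to a single non‑degeneracy property of iterated convolutions.

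First I would record a near‑diagonal structure of $K_M$. Let $k_0\in\mathbb N$ denote the order of vanishing of $M$ at $0$ (finite since $M$ is analytic and $\not\equiv0$), so $M^{(k_0)}(0)\ne0$. A Beta‑integral computation shows that $\underset{j}{\underbrace{M*\cdots*M}}$ vanishes at $0$ to order at least $k_0$, with equality only when $j=1$; dividing the series \eqref{new-kernel-KM} by $(t-s)^{k_0}$ term by term (the convergence being uniform, cf.\ Proposition~\ref{prop-KM-Ck-regularity}) then gives
\[
K_M(t,s)=(t-s)^{k_0}\,\Psi(t-s,s),
\]
where $\Psi$ is real analytic near the diagonal and $\Psi(0,v)=-\frac{M^{(k_0)}(0)}{k_0!}\,v$. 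In particular $\Psi(0,v)\ne0$ for $v\ne0$; and since $\Psi$ also vanishes on $\{v=0\}$, near $(0,0)$ one can write $\Psi(u,v)=v\,\Xi(u,v)$ with $\Xi(0,0)=-\frac{M^{(k_0)}(0)}{k_0!}\ne0$.

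Next comes the reduction. Suppose, for contradiction, that $\partial_{\vec v}^{\,l}K_M(t_0,s_0)=0$ for all $l\in\mathbb N$ at some $(t_0,s_0,v_1,v_2)\notin I$. Restricting $K_M$ to the line $L$ through $(t_0,s_0)$ in direction $\vec v$: since $S_+$ is convex and $K_M$ is real analytic there, $K_M|_{L\cap S_+}$ is real analytic on the connected set $L\cap S_+$ and vanishes to infinite order at $(t_0,s_0)$, hence $K_M\equiv0$ on $L\cap S_+$. If $v_1\ne v_2$, the line $L$ meets the diagonal at a single point $(r_*,r_*)$, $L\cap S_+$ is a closed ray with that endpoint, and the factorization forces $K_M\ne0$ on this ray arbitrarily near its endpoint unless $r_*=0$; if $r_*=0$ then $L=\{r\vec v\}$ and $K_M(r\vec v)=r^{k_0+1}(v_1-v_2)^{k_0}v_2\,\Xi\bigl(r(v_1-v_2),rv_2\bigr)$, which is $\not\equiv0$ unless $v_2=0$, and $v_2=0$ together with $r_*=0$ exactly puts the triple in $I$. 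If $v_1=v_2$, then $\vec v=\pm\frac1{\sqrt2}(1,1)$ and $L=\{t-s=c\}\subset S_+$ with $c:=t_0-s_0\ge0$; here $K_M\equiv0$ on $L$ means $\sum_{j\ge1}\frac{(-s)^j}{j!}\underset{j}{\underbrace{M*\cdots*M}}(c)=0$ for all $s\in\mathbb R$. For $c=0$ this reads $M(0)=0$ and the triple lies in $I$; for $c>0$ the left‑hand side is entire in $s$, so necessarily $\underset{j}{\underbrace{M*\cdots*M}}(c)=0$ for every $j\ge1$.

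The remaining — and, I expect, main — point is the non‑degeneracy: for every $c>0$ there exists $j\ge1$ with $\underset{j}{\underbrace{M*\cdots*M}}(c)\ne0$. This is where the analyticity of $M$ is genuinely used (for general $L^1$ kernels it fails, e.g.\ for a bump supported away from $0$). Since $M$ is analytic and $\not\equiv0$ on the connected interval $[0,\infty)$, it cannot vanish on any subinterval, so $\operatorname{supp}M=[0,\infty)$ and, by Titchmarsh's convolution theorem, $\operatorname{supp}\bigl(\underset{j}{\underbrace{M*\cdots*M}}\bigr)=[0,\infty)$ for all $j$. I would then close the argument either via the structure of the closed invariant subspaces of the Volterra operator $f\mapsto M*f$ on $C[0,c]$ — the cyclic subspace it generates from $M$ is all of $C[0,c]$ (as $\inf\operatorname{supp}M=0$) and so cannot be contained in the hyperplane $\{f(c)=0\}$ — or, more explicitly, by replacing $M$ with $M\chi_{[0,c]}$ (which leaves $\underset{j}{\underbrace{M*\cdots*M}}(c)$ unchanged), so that $\widehat M$ is entire with $\widehat M(p)\sim M^{(k_0)}(0)\,p^{-k_0-1}$ as $\operatorname{Re}p\to+\infty$, and then showing that the entire function $\lambda\mapsto\sum_{j\ge1}\lambda^j\,\underset{j}{\underbrace{M*\cdots*M}}(c)=\frac1{2\pi i}\int_{\Gamma}\frac{e^{cp}}{1-\lambda\widehat M(p)}\,dp$ (with $\Gamma$ a vertical line $\operatorname{Re}p=\mathrm{const}$) grows, being dominated as $\lambda\to\infty$ by the residue at the zero of $1-\lambda\widehat M$ that escapes to $\operatorname{Re}p=+\infty$, and hence is not identically zero. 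Granting this, all cases above yield a contradiction, proving \eqref{strong-v-l-zero}.
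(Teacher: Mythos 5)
Your proposal takes a genuinely different route from the paper's. The paper fixes $\vec v=(0,1)$, restricts $K_M$ to the vertical line, converts the infinite-order vanishing (via real-analyticity) into a Volterra integral identity $(\mathrm{Id}+\mathcal Q)(M|_{[0,T]})=0$ with $\mathcal Q$ built from the operators $\mathcal K_j$, and inverts $\mathrm{Id}+\mathcal Q$ by a Neumann series to conclude $M\equiv 0$; for other directions it simply asserts the argument is ``very similar.'' You instead exploit a near-diagonal factorization $K_M(v+u,v)=u^{k_0}\Psi(u,v)$, $\Psi(0,v)=-v\,M^{(k_0)}(0)/k_0!$, which follows from the observation that the Taylor coefficient of $u^k v^j$ in $K_M(v+u,v)$ vanishes whenever $k<j(k_0+1)-1$ and hence whenever $k<k_0$. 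This is a nice geometric device: for any direction with $v_1\neq v_2$ the zero set of $K_M$ on the line is incompatible with the factorization near the unique diagonal intersection point, and the only escapes ($r_*=0$ with $v_2=0$, or $v_1=v_2$ with $t_0=s_0$) are precisely the excluded set $I$. Your case analysis is, in this sense, more careful than the paper's, and correctly identifies that the diagonal direction is qualitatively different: there the infinite-order vanishing reduces to the single pointwise condition $\underset{j}{\underbrace{M*\cdots*M}}(c)=0$ for all $j\geq 1$, with $c=t_0-s_0>0$, rather than to a functional Volterra identity.

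That pointwise non-degeneracy claim is the genuine gap. You offer two closing arguments, but neither is carried through. The invariant-subspace route invokes that the cyclic subspace of the convolution Volterra operator $f\mapsto M*f$ generated by $M$ is all of $C[0,c]$ (because $\inf\operatorname{supp}M=0$). That is a Donoghue--Kalisch--Sarason-type statement about lattices of closed invariant subspaces; the classical versions are for the basic Volterra operator on $L^2[0,1]$, and extending it to a general convolution kernel on $C[0,c]$ requires hypotheses you have not checked (and is not a corollary of Titchmarsh alone, which only controls the left endpoint of the support). The Laplace-transform/residue route is also only a sketch: you need to justify that $1-\lambda\widehat{M_c}$ has a zero escaping to $\operatorname{Re}p=+\infty$, that the contour can be deformed across it, and that the contribution of that residue actually dominates the remaining integral as $\lambda\to\infty$. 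None of these estimates is supplied, and they are not routine for a general analytic $M$ that may vanish to high order at $0$. (For what it is worth, the paper's own ``other cases can be proved in a very similar way'' appears not to cover the diagonal direction $v_1=v_2$ either: the Neumann-series argument adapts to any $v_1\neq v_2$, where one obtains a genuine integral equation for $M$ on an interval, but for $v_1=v_2$ one is left with exactly the pointwise degeneracy you isolate. So you have found a real subtlety; your proof, however, does not yet close it.)
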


\begin{remark}
	{\it Obviously,  real analytic functions on $\mathbb R^2$ do not necessarily fulfill the non-degeneracy condition above. Indeed, polynomials, for instance,  can vanish along lines in $\mathbb{R}^2$.}	
\end{remark}

\begin{proof}[Proof of Proposition \ref{prop-KM-strong-unique}]
	It suffices to prove \eqref{strong-v-l-zero} in the case that $\vec{v}=(0,1)$,
as other cases can be proved in a very similar way. By contradiction, suppose that   \eqref{strong-v-l-zero} with  $\vec{v}=(0,1)$ fails, i.e.,
	\begin{align}\label{KM-s-l-zero}
	\partial_{(0,1)}^l K_M(t_0,s_0)=0\;\;\mbox{for all}\;\; l\in \mathbb N.
	\end{align}
	Define
	\begin{align}\label{2.22,7.23}
	f(\lambda):= K_M(t_0,t_0-\lambda),~~\lambda\geq0.
		\end{align}
	Two facts on $f$ are given as follows: First, by the real analyticity of $M$ over $\overline{\mathbb R^+}$, we see from Proposition \ref{prop-KM-analytic} that $K_M$ is real analytic over $S_+$.
 This, along with \eqref{2.22,7.23}, yields that $f$ is real analytic over  $\overline{\mathbb R^+}$.
	Second,
	by \eqref{KM-s-l-zero} and (\ref{2.22,7.23}), we find that  $f$ vanishes of infinite order at $\lambda=t_0 -s_0$.
	From these two facts, we see that
	$f\equiv 0$ over $\overline{\mathbb R^+}$,
	which, along with (\ref{2.22,7.23}), yields
	\begin{align*}
	K_M(t_0,t_0-\lambda)=0,~~\lambda\geq0.
		\end{align*}
	The above, together with  \eqref{new-kernel-KM}, shows
	\begin{align*}
	0= \sum_{j=1}^{+\infty}  \frac{ ( \lambda-t_0 )^j }{ j! } \underset{j}{ \underbrace{M*\cdots*M} }(\lambda),~~\lambda\geq0,
		\end{align*}
	which leads to
	\begin{align}\label{KM-s-l-zero-expression}
	0=  M(\lambda)  +
	\sum_{j=2}^{+\infty}  \frac{ ( \lambda-t_0 )^{j-1} }{ j! } \underset{j}{ \underbrace{M*\cdots*M} }(\lambda),~~\lambda\geq0.
		\end{align}
	
	Next, we arbitrarily fix $T>0$. For each $j\in \mathbb{N}^+\setminus\{1\}$, we define an operator $\mathcal K_j$ on
	$C([0,T])$ in the following manner: given $g\in C([0,T])$, set
	\begin{align}\label{KM-s-l-zero-expression-1}
	\mathcal K_j(g)(\lambda):= \frac{ ( \lambda-t_0 )^{j-1} }{ j! } \underset{j-1}{ \underbrace{M*\cdots*M} } * g(\lambda),~~
	0\leq \lambda \leq T,
	\end{align}
	which is well-defined, linear and bounded.
	Let
	$$
	\mathcal Q:= \sum_{j=2}^{+\infty} \mathcal K_j.
	$$
	One can directly check that $\mathcal Q\in \mathcal L(C([0,T]))$, the Banach space of all linear and bounded operators on $C([0,T])$.
	Thus,  we deduce from \eqref{KM-s-l-zero-expression} that
	\begin{align}\label{Id-Q-M=zero}
	(Id+ \mathcal Q) (M|_{[0,T]})  =   \Big( Id+ \sum_{j=2}^{+\infty} \mathcal K_j \Big) (M|_{[0,T]}) =0,
	\end{align}
	where $Id$ is the identity operator on $C([0,T])$.
	
	We now claim that $(Id+ \mathcal Q)^{-1}$
exists in $\mathcal L( C([0,T]) )$.
	 When this is done, we can use \eqref{Id-Q-M=zero}, \eqref{new-kernel-KM}
and the  arbitrariness of $T$ to see that  $M\equiv0$, which contradicts the assumption $(\mathfrak C)$.
Consequently,  \eqref{strong-v-l-zero} is true.
	
	The  remainder is to show the above claim. To this end, we arbitrarily fix $k\in \mathbb N^+$ and then estimate $\mathcal Q^k$
in the following manner:  Set
	\begin{align*}
	T_{t_0} := \sup_{ 0\leq \lambda \leq T}  | \lambda -t_0 |
	= \max \big\{ |t_0|,~ |T-t_0| \big\}.
	\end{align*}
	Then from  \eqref{KM-s-l-zero-expression-1},
	one can directly check that when $j_1,\ldots,j_k\geq 2$,
	\begin{align*}
	\| \mathcal K_{j_1} \cdots \mathcal K_{j_k} \|_{ \mathcal L( C([0,T]) ) }
	\leq&  \frac{1}{j_1!}  \cdots \frac{1}{j_k!}
	T_{t_0}^{j_1+\cdots +j_k -k}
	\big\| \underset{j_1+\cdots+j_k-k}{ \underbrace{|M|*\cdots*|M|} } \big\|_{ L^1( [0,T] ) }
	\nonumber\\
	\leq&  \frac{1}{j_1!}  \cdots \frac{1}{j_k!}
	T_{t_0}^{j_1+\cdots +j_k -k}
	\frac{ T^{j_1+\cdots +j_k-k } }{ (j_1+\cdots+j_k-k)! }
	\| M\|_{C([0,T])}^{j_1+\cdots +j_k -k}
	\nonumber\\
	\leq& \frac{1}{ k! }  \frac{ 1 }{j_1!}  \cdots \frac{1}{j_k!}
	\Big( T_{t_0} T \| M\|_{C([0,T])} \Big)^{j_1+\cdots +j_k -k }.
	\end{align*}
	This, along with the definition of $\mathcal Q$, yields
	\begin{align*}
	&\| \mathcal Q^k \|_{ \mathcal L( C([0,T]) ) }
	=  \Big\| \sum_{ j_1,\ldots,j_k \geq 2 }
	\mathcal K_{j_1} \cdots \mathcal K_{j_k} \Big\|_{ \mathcal L( C([0,T]) ) }
	\nonumber\\
	\leq& \frac{ 1 }{ k! }
	\prod_{m=1}^k \left( \sum_{j_m\geq 2} \frac{1}{j_m!}
	\Big( T_{t_0} T \| M\|_{C([0,T])} \Big)^{j_m-1}
\right)
	\leq \frac{  1  }{ k! }
	\exp\Big(   k T_{t_0} T   \| M\|_{C([0,T])} \Big).
	\end{align*}
	So   $\displaystyle\sum_{k=0}^{+\infty} (-\mathcal Q)^k$  converges in  $\mathcal L( C([0,T]) )$.
	Then we have
	\begin{align*}
	Id= (Id + \mathcal Q ) \sum_{k=0}^{+\infty} (-\mathcal Q)^k.
	\end{align*}
	Therefore, $(Id+ \mathcal Q)^{-1}$  exists in $\mathcal L( C([0,T]) )$. This
	completes  the proof of  Proposition \ref{prop-KM-strong-unique}.		
\end{proof}

The following  Proposition  \ref{prop-KM-regularity} presents
a weighted estimate of
the flow kernel $K_M$.
 It will be used in the proof of
Proposition \ref{prop-varPhi-expression}
that provides an explicit expression of the gap between the heat evolution with and without memory.

\begin{proposition}\label{prop-KM-regularity}
	For each  $\lambda \in \mathbb R$,
		\begin{align}\label{KM-regularity-estimate}
		\int_{0}^t e^{-\lambda(t-s)}
		| K_M(t,s) | ds
		\leq
		\exp\bigg( t \int_0^t e^{-\lambda \tau} |M(\tau)| d\tau \bigg)
		- 1,
~~t\geq0.
				\end{align}
	
\end{proposition}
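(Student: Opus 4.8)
The plan is to estimate the weighted integral of $|K_M|$ by using the series representation \eqref{KM-Mj-repsentation}--\eqref{def-Mj} from the proof of Proposition \ref{prop-KM-Ck-regularity}, namely $K_M(t,s)=\sum_{j\geq 1}\mathcal M_j(t,s)$ with $\mathcal M_j(t,s)=\frac{(-s)^j}{j!}\,\underset{j}{\underbrace{M*\cdots*M}}(t-s)$, together with the elementary bound on iterated convolutions of a single function: for $g\in L^1_{loc}(\overline{\mathbb R^+})$ one has $\underset{j}{\underbrace{|g|*\cdots*|g|}}(\tau)\leq \frac{\tau^{j-1}}{(j-1)!}\big(\sup_{[0,\tau]}|g|\big)^j$ — but here it is cleaner to keep the convolution structure and pass the weight inside. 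Concretely, I would first reduce to controlling, for each $j\geq 1$,
\[
I_j(t):=\int_0^t e^{-\lambda(t-s)}\,\frac{|s|^j}{j!}\,\Big|\underset{j}{\underbrace{M*\cdots*M}}(t-s)\Big|\,ds,
\]
and then sum over $j$, since by the triangle inequality $\int_0^t e^{-\lambda(t-s)}|K_M(t,s)|\,ds\leq \sum_{j\geq 1} I_j(t)$, the interchange being justified by the $C^\infty(S_+)$-convergence established in Proposition \ref{prop-KM-Ck-regularity}.

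The key step is the bound $I_j(t)\leq \frac{1}{j!}\big(t\int_0^t e^{-\lambda\tau}|M(\tau)|\,d\tau\big)^j$. To see this, substitute $\tau=t-s$, so $s=t-\tau\in[0,t]$ and $|s|=t-\tau\leq t$, giving $I_j(t)\leq \frac{t^j}{j!}\int_0^t e^{-\lambda\tau}\big|\underset{j}{\underbrace{M*\cdots*M}}(\tau)\big|\,d\tau$. Now I would show by induction on $j$ that
\[
\int_0^t e^{-\lambda\tau}\Big|\underset{j}{\underbrace{M*\cdots*M}}(\tau)\Big|\,d\tau
\;\leq\; \frac{1}{j!}\,t^{\,j-1}\Big(\int_0^t e^{-\lambda\tau}|M(\tau)|\,d\tau\Big)^{j}\!,
\]
no — more simply, use that $\big|\underset{j}{\underbrace{M*\cdots*M}}\big|\leq \underset{j}{\underbrace{|M|*\cdots*|M|}}$ pointwise and that the exponential weight is multiplicative under convolution in the sense that $e^{-\lambda\cdot}(f*g)=\big(e^{-\lambda\cdot}f\big)*\big(e^{-\lambda\cdot}g\big)$; hence $e^{-\lambda\tau}\underset{j}{\underbrace{|M|*\cdots*|M|}}(\tau)=\underset{j}{\underbrace{(e^{-\lambda\cdot}|M|)*\cdots*(e^{-\lambda\cdot}|M|)}}(\tau)$, and integrating a $j$-fold convolution of nonnegative $L^1$ functions over $[0,t]$ is bounded by $\frac{t^{j-1}}{(j-1)!}$... actually the clean identity is $\int_0^t (f_1*\cdots*f_j)\leq \prod_i \|f_i\|_{L^1([0,t])}\cdot$ (a combinatorial factor); to avoid the factor entirely, I would instead just use $\int_0^t (f*g)(\tau)d\tau\leq \int_0^t f\cdot\int_0^t g$ iteratively to get $\int_0^t \underset{j}{\underbrace{|M|*\cdots*|M|}}(\tau)\,d\tau$ bounded, after inserting the weight, by $\big(\int_0^t e^{-\lambda\tau}|M(\tau)|d\tau\big)^j$ — combined with the $t^j/j!$ prefactor this yields exactly $I_j(t)\leq\frac{1}{j!}\big(t\int_0^t e^{-\lambda\tau}|M(\tau)|\,d\tau\big)^j$.

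Summing over $j\geq 1$ then gives $\sum_{j\geq 1}I_j(t)\leq \sum_{j\geq 1}\frac{1}{j!}\big(t\int_0^t e^{-\lambda\tau}|M(\tau)|\,d\tau\big)^j=\exp\big(t\int_0^t e^{-\lambda\tau}|M(\tau)|\,d\tau\big)-1$, which is precisely \eqref{KM-regularity-estimate}. The main obstacle I anticipate is getting the convolution bookkeeping to produce exactly the stated right-hand side with no stray constant: one must pair the weight $e^{-\lambda(t-s)}$ correctly with the $(t-s)$-argument of the iterated convolution (which is why the substitution $\tau=t-s$ is the right move) and use the multiplicativity of the exponential under convolution rather than a crude $L^1\times L^\infty$ estimate, which would lose the sharp form. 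The case $\lambda\leq 0$ requires no sign fussing since everything in sight is handled with absolute values; the bound $|s|=t-\tau\le t$ for $s\in[0,t]$ is all that is used about the weight's interaction with the polynomial factor.
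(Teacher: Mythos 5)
Your proposal is correct and follows essentially the same route as the paper: the paper introduces the weighted kernel $M_\lambda(t)=e^{-\lambda t}M(t)$, uses the identity $e^{-\lambda\tau}\,\underset{j}{\underbrace{M*\cdots*M}}(\tau)=\underset{j}{\underbrace{M_\lambda*\cdots*M_\lambda}}(\tau)$ (your ``multiplicativity of the exponential weight under convolution''), bounds $|s|^j\le t^j$, and applies Young's inequality iteratively before summing the exponential series. No gaps; only the exposition could be tightened by committing to the weighted-kernel formulation from the start rather than revising mid-argument.
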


\begin{proof}
	Arbitrarily fix
 $\lambda\in \mathbb R$. Define the following weighted memory kernel:
	\begin{align}\label{def-M-lambda}
	M_\lambda(t) := e^{-\lambda t} M(t),~~t\geq 0.
	\end{align}
	
We claim
	\begin{align}\label{claim-KM-KMlambda-relation}
	e^{-\lambda (t-s)}  K_{M} (t,s)
		= \sum_{j=1}^{+\infty} \frac{ (-s)^j }{j!}
\underset{j}{ \underbrace{M_\lambda *\cdots* M_\lambda} } (t-s),
	~~(t,s)\in S_+.
	\end{align}
	Indeed,
\eqref{claim-KM-KMlambda-relation} follows from (\ref{new-kernel-KM}) and the following identity:
	\begin{align*}
	\underset{j}{ \underbrace{M_\lambda *\cdots* M_\lambda } }(\tau)
	= e^{-\lambda \tau} \underset{j}{ \underbrace{M *\cdots* M } }(\tau),~~\tau\geq 0,\; j\in \mathbb N^+,
	\end{align*}
	which can be verified directly.

 Next, we arbitrarily fix $t>0$. By  the iterative use of the  Young's inequality:
	\begin{align*}
	\|f*g\|_{L^1(0,t)}   \leq \|f\|_{L^1(0,t)}  \|g\|_{L^1(0,t)},\;\mbox{when}\; f,g \in L^1(0,t),
	\end{align*}
one has that
 \begin{align*}
 \| \underset{j}{ \underbrace{M_\lambda *\cdots* M_\lambda} } \|_{L^1(0,t)}
	\leq \|M_\lambda\|_{L^1(0,t)}^j,~ ~j\in \mathbb N^+.
	\end{align*}
 This, along with  \eqref{claim-KM-KMlambda-relation}, yields
	\begin{align*}
	\int_{0}^{t}
	\big| e^{-\lambda (t-s)}  K_{M} (t,s) \big| ds
	&\leq
	\sum_{j=1 }^{+\infty} \frac{ t^j }{ j! }
\|\underset{j}{ \underbrace{M_\lambda *\cdots* M_\lambda} }\|_{L^1(0,t)}
\leq
	\exp\Big( t \|M_\lambda\|_{L^1(0,t)} \Big)
	- 1.
	\end{align*}
Then \eqref{KM-regularity-estimate} follows from \eqref{def-M-lambda}.
	This concludes the proof of Proposition \ref{prop-KM-regularity}.
\end{proof}

\section{Parameterized ODEs with memory}
In this section we analyze the ODE \eqref{ode-memory2-15}, i.e.,
\begin{eqnarray}\label{ode-memory}
w^\prime(t)+\eta w(t) + \int_0^t M(t-s)w(s) ds=0,~~t>0;~~~
w(0)=1,
\end{eqnarray}
where
$\eta>0$ is a parameter.
First of all, by a standard method in the ODE theory, one can easily check that
the equation (\ref{ode-memory}) has a unique solution, denoted by $w_\eta$,
in the space $C^1(\overline{\mathbb R^+})$.
The main result of this section is the next Theorem \ref{thm-ODE-meomery-asymptotic} which gives
a decomposition in terms of $\eta$ for the solution $w_\eta$.
It  lays a solid foundation for the proof of
Theorem
 \ref{cor-0423-demcomposition}.

\begin{theorem}\label{thm-ODE-meomery-asymptotic}
For each integer $N\geq 2$, the solution $w_\eta$ (with  $\eta>0$) to the equation (\ref{ode-memory}) satisfies
\begin{align}\label{thm-ODE-meomery-asymptotic-estimate}
	w_\eta(t) ~=~  e^{-\eta t} \bigg( 1 + \sum_{l=0}^{N-1} p_{l}(t)  \eta^{-l-1}  \bigg)
	+   \sum_{l=0}^{N-1} h_{l}(t)  \eta^{-l-1}
	+   R_N(t,\eta) \eta^{-N-1},~~
	t \geq 0,
	\end{align}
where  $\{h_l\}_{l\in \mathbb{N}}$ and $\{p_l\}_{l\in\mathbb{N}}$ are given by (\ref{thm-ODE-meomery-asymptotic-estimate-hypobolic})
and $R_N$ is given by \eqref{0921-RN-good-remainder}.
In addition, the following conclusions are true:

\noindent
$(i)$ 	The function  $R_N$ is in the space
	$C\big(\overline{\mathbb R^+} \times \overline{\mathbb R^+} \big)
	\cap C\big(\mathbb R^+; C(\mathbb R^+)  \big)$ and fulfills the  estimate:
	\begin{eqnarray}\label{estimate-p-q-R}
	\| R_N(t,\cdot) \|_{ C(\mathbb R^+) }
	\leq
    e^t \bigg\{
	\exp\bigg[ N(1+t) \bigg(\sum_{j=0}^N \max_{0\leq s\leq t} \Big| \frac{d^j}{ds^j}M(s) \Big| \bigg) \bigg]  - 1  \bigg\},~~
	t\geq 0.
	\end{eqnarray}

\noindent $(ii)$ For each $t\in \overline{\mathbb R^+}$, neither  $\{h_l(t)\}_{l\geq 1}$ nor $\{p_l(t)\}_{l\in\mathbb{N}}$
	is the  null sequence.	
\end{theorem}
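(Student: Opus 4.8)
The plan is to reduce the memory-ODE \eqref{ode-memory} to a closed-form representation of $w_\eta$ in terms of the flow kernel $K_M$ analysed in Section~2, and then to extract the expansion \eqref{thm-ODE-meomery-asymptotic-estimate} by integrating by parts $N$ times: the boundary contributions will give the coefficients $h_l,p_l$ and the leftover interior integral will be the remainder. First I would put \eqref{ode-memory} in Duhamel form, $w_\eta=e^{-\eta\,\cdot}-e^{-\eta\,\cdot}*(M*w_\eta)$, solve it by the resulting Neumann series (whose $j$-th term is $(-1)^j\big(e^{-\eta\,\cdot}\big)^{*(j+1)}*\big(\underset{j}{\underbrace{M*\cdots*M}}\big)$), use $\big(e^{-\eta\,\cdot}\big)^{*(j+1)}(s)=\frac{s^j}{j!}e^{-\eta s}$ together with the symmetry of the convolution, and recall the definition \eqref{new-kernel-KM} of $K_M$. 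The interchange of the sum over $j$ with the $s$-integral is legitimate because of the convolution bounds used in the proof of Proposition~\ref{prop-KM-Ck-regularity} (applied with $\alpha=\beta=0$), and one obtains
$$w_\eta(t)=e^{-\eta t}+\int_0^t e^{-\eta s}\,K_M(t,s)\,ds,\qquad t\ge 0.$$

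Next, since $K_M\in C^\infty(S_+)$ by Proposition~\ref{prop-KM-Ck-regularity}, the map $s\mapsto K_M(t,s)$ is smooth up to $s=t$, so $\partial_s^l K_M(t,t)$ is meaningful, and I would integrate by parts $N$ times in $s$ using $e^{-\eta s}=-\eta^{-1}\partial_s e^{-\eta s}$ at each step. The boundary terms at $s=0$ yield the coefficients $\eta^{-l-1}\partial_s^l K_M(t,0)$ ($0\le l\le N-1$), those at $s=t$ yield $-e^{-\eta t}\eta^{-l-1}\partial_s^l K_M(t,t)$, and the remaining interior integral $\eta^{-N}\int_0^t e^{-\eta s}\partial_s^N K_M(t,s)\,ds$ is exactly $\eta^{-N-1}R_N(t,\eta)$ with $R_N$ as in \eqref{0921-RN-good-remainder}. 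It then remains to identify $\partial_s^l K_M(t,0)$ with $h_l(t)$ and $-\partial_s^l K_M(t,t)$ with $p_l(t)$. For this I would expand $K_M=\sum_{j\ge1}\frac{(-s)^j}{j!}F_j(t-s)$ with $F_j:=\underset{j}{\underbrace{M*\cdots*M}}$, and differentiate term by term via the Leibniz rule already computed in the proof of Proposition~\ref{prop-KM-Ck-regularity} (specialised to $\alpha=0$). At $s=0$ only the top term of the Leibniz sum survives — forcing $j\le l$ — which gives $\partial_s^l K_M(t,0)=(-1)^l\sum_{j=1}^{l}C_l^{l-j}F_j^{(l-j)}(t)=h_l(t)$ after discarding the vanishing $j=0$ contribution. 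At $s=t$ one is left with a finite sum of terms proportional to $t^{\,j-m}F_j^{(l-m)}(0)$; here I would invoke the elementary fact that $F_j$ vanishes to order $j-1$ at the origin, i.e. $F_j^{(k)}(0)=0$ for $0\le k\le j-2$ (a short induction on $j$ using $(f*g)'=f(0)g+f'*g$), which annihilates every term with $l-m<j-1$. Re-indexing the surviving terms by the power $m'=j-m$ of $(-t)$ and rewriting the binomial coefficients, the $m'=0$ part sums precisely to $h_l(0)$ and the rest reproduces, term for term, the constrained double sum in \eqref{thm-ODE-meomery-asymptotic-estimate-hypobolic}; this gives $-\partial_s^l K_M(t,t)=p_l(t)$. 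Substituting both identities into the expansion above proves \eqref{thm-ODE-meomery-asymptotic-estimate}.

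For conclusion $(i)$, continuity of $R_N(t,\eta)=\eta\int_0^t e^{-\eta s}\partial_s^N K_M(t,s)\,ds$ on $\overline{\mathbb R^+}\times\overline{\mathbb R^+}$ follows from continuity of $\partial_s^N K_M$ on $S_+$, the only delicate points being $t=0$ or $\eta=0$, where $R_N=0$ and $|R_N(t,\eta)|\le \eta\,t\,\sup_{[0,t]}|\partial_s^N K_M(t,\cdot)|$; the statement that $t\mapsto R_N(t,\cdot)$ is continuous into $C(\mathbb R^+)$ follows from uniform continuity of $\partial_s^N K_M$ on compacta together with $\eta\int_0^t e^{-\eta s}\,ds\le 1$. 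The bound \eqref{estimate-p-q-R} is obtained by estimating, for $0\le s\le t$, $|\partial_s^N K_M(t,s)|\le e^{t-s}\big[\exp\big(N(1+t)\sum_{j=0}^N\max_{[0,t]}|M^{(j)}|\big)-1\big]$ via Proposition~\ref{prop-KM-Ck-regularity} (with $\alpha=0,\ \beta=N$) and using $\eta\int_0^t e^{t-s}e^{-\eta s}\,ds=e^t\frac{\eta}{1+\eta}\big(1-e^{-(1+\eta)t}\big)\le e^t$. For conclusion $(ii)$: since $h_0\equiv 0$ and $h_l(t)=\partial_s^l K_M(t,0)$, the sequence $\{h_l(t)\}_{l\ge1}$ being null would force $\partial_{(0,1)}^l K_M(t,0)=0$ for all $l\in\mathbb N$, impossible by Proposition~\ref{prop-KM-strong-unique} since $(t,0,0,1)\notin I$; likewise $\{p_l(t)\}_{l\in\mathbb N}$ being null would force $\partial_{(0,1)}^l K_M(t,t)=0$ for all $l$, again contradicting Proposition~\ref{prop-KM-strong-unique} since $(t,t,0,1)\notin I$. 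The step I expect to be the genuine obstacle is the identification of the $s=t$ boundary values in the previous paragraph: tracking the binomial coefficients and, crucially, exploiting the exact vanishing order of iterated convolutions at the origin so that only the combinations listed in \eqref{thm-ODE-meomery-asymptotic-estimate-hypobolic} survive; the remaining steps are either immediate consequences of Section~2 or elementary estimates.
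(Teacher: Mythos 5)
Your proposal follows essentially the same route as the paper: the Duhamel/Neumann-series derivation of $w_\eta(t)=e^{-\eta t}+\int_0^t e^{-\eta s}K_M(t,s)\,ds$ is the paper's Lemma \ref{lem-ode-memory-infinite-series}, the $N$-fold integration by parts is Lemma \ref{lem-estimate-for-convolution}, the identification $\partial_s^l K_M(t,0)=h_l(t)$ and $\partial_s^l K_M(t,t)=-p_l(t)$ (via the Leibniz expansion and the vanishing of $\underset{j}{\underbrace{M*\cdots*M}}$ to order $j-1$ at the origin) is Lemma \ref{lem-KM-boundary-value}, and the proofs of $(i)$ and $(ii)$ use Proposition \ref{prop-KM-Ck-regularity} and Proposition \ref{prop-KM-strong-unique} exactly as the paper does. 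The argument is correct, including the verification that $(t,0,0,1)$ and $(t,t,0,1)$ lie outside the exceptional set $I$.
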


\begin{remark}
	{\it
  $(i)$ The decomposition (\ref{thm-ODE-meomery-asymptotic-estimate}) serves for that in  \eqref{0423-demcomposition-eq}.
 The components  in  the decompositions (\ref{thm-ODE-meomery-asymptotic-estimate}) and  \eqref{0423-demcomposition-eq}
 correspond to each other.

$(ii)$ The conclusion $(ii)$ in Theorem \ref{thm-ODE-meomery-asymptotic} corresponds to
\eqref{2021-april-hl-pl-nontrivial} in
Theorem \ref{cor-0423-demcomposition} discussed in
Remark \ref{remark1.2-2-13}.}
\end{remark}

To prove Theorem \ref{thm-ODE-meomery-asymptotic}, we need  several lemmas.
The first one refers to the representation of  the solutions of (\ref{ode-memory}) in terms of the flow kernel $K_M$ (given by (\ref{new-kernel-KM})).

\begin{lemma}\label{lem-ode-memory-infinite-series}
	The solution $w_\eta$ (with  $\eta\in \mathbb{R}$) to (\ref{ode-memory}) satisfies
	\begin{eqnarray}\label{infinite-series-express}
	w_\eta(t) =   e^{-\eta t} +
	\int_0^t K_M(t,s)  e^{-\eta s }  ds,~~
	t\geq0.
	\end{eqnarray}
	\end{lemma}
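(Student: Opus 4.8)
The plan is to solve \eqref{ode-memory} by a Volterra--Neumann series and recognise the resulting kernel as $K_M$. The guess comes from the (formal) Laplace transform: $\widehat{w_\eta}(\lambda)=(\lambda+\eta+\widehat M(\lambda))^{-1}=\sum_{j\ge 0}(-1)^j\widehat M(\lambda)^j(\lambda+\eta)^{-(j+1)}$, and since $(\lambda+\eta)^{-(j+1)}$ and $\widehat M(\lambda)^j$ are the transforms of $\frac{t^j}{j!}e^{-\eta t}$ and of $\underset{j}{\underbrace{M*\cdots*M}}$ respectively, inverting term by term produces exactly the kernel in \eqref{new-kernel-KM}. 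To make this rigorous I would stay in the time variable. First, by Duhamel's formula, a function $w\in C^1(\overline{\mathbb R^+})$ solves \eqref{ode-memory} if and only if $w\in C(\overline{\mathbb R^+})$ and $w(t)=e^{-\eta t}-\int_0^t e^{-\eta(t-\tau)}(M*w)(\tau)\,d\tau$ for $t\ge 0$ (the two are equivalent since $M*w$ is continuous whenever $w$ is, so the right-hand side is automatically $C^1$ and may be differentiated). Writing $e_\eta(t):=e^{-\eta t}$ and using $\int_0^t e^{-\eta(t-\tau)}h(\tau)\,d\tau=(e_\eta*h)(t)$ together with the associativity of $*$, this becomes $w=e_\eta-g*w$ with $g:=e_\eta*M\in C(\overline{\mathbb R^+})$; in particular the uniqueness of the $C^1$ solution of \eqref{ode-memory} recorded above yields uniqueness of the continuous solution of this integral equation.

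Next I would introduce the Neumann series $W:=\sum_{j\ge 0}(-1)^j\,g^{*j}*e_\eta$, where $g^{*j}$ denotes the $j$-fold convolution of $g$ (the $j=0$ term being $e_\eta$). An immediate induction gives $|g^{*j}(t)|\le \|g\|_{C([0,T])}^j\,t^{j-1}/(j-1)!$ on $[0,T]$, so the series converges absolutely in $C([0,T])$ for every $T>0$. Convolving term by term (legitimate by this uniform convergence and the boundedness of $g$) gives $g*W=\sum_{j\ge 1}(-1)^{j-1}g^{*j}*e_\eta=e_\eta-W$, hence $W=e_\eta-g*W$, and therefore $w_\eta=W$ by the uniqueness just noted.

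It remains to evaluate the terms. Since $*$ is commutative and associative, $g^{*j}*e_\eta=e_\eta^{*(j+1)}*\underset{j}{\underbrace{M*\cdots*M}}$, and a one-line induction (based on $\int_0^t e^{-\eta(t-s)}\tfrac{s^{k-1}}{(k-1)!}e^{-\eta s}\,ds=\tfrac{t^{k}}{k!}e^{-\eta t}$) gives $e_\eta^{*(j+1)}(t)=\frac{t^{j}}{j!}e^{-\eta t}$; consequently $g^{*j}*e_\eta(t)=\int_0^t\frac{(t-s)^j}{j!}e^{-\eta(t-s)}\,\underset{j}{\underbrace{M*\cdots*M}}(s)\,ds$, and the substitution $s\mapsto t-s$ turns this into $(-1)^j\,g^{*j}*e_\eta(t)=\int_0^t \frac{(-s)^j}{j!}\,\underset{j}{\underbrace{M*\cdots*M}}(t-s)\,e^{-\eta s}\,ds$. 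Summing over $j\ge 1$ and passing the sum inside the integral — legitimate since, by the iterated Young inequality (cf.\ Lemma \ref{lem-convolution-estimates}), $\sum_{j\ge 1}\frac{s^j}{j!}\big|\underset{j}{\underbrace{M*\cdots*M}}(t-s)\big|$ is bounded on $s\in[0,t]$, uniformly in $\eta$ — yields $w_\eta(t)=e^{-\eta t}+\int_0^t\big(\sum_{j\ge 1}\frac{(-s)^j}{j!}\underset{j}{\underbrace{M*\cdots*M}}(t-s)\big)e^{-\eta s}\,ds$, whose inner sum is precisely $K_M(t,s)$ (well defined by Proposition \ref{prop-KM-analytic}). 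This is \eqref{infinite-series-express}, and since all the estimates above are uniform on compact time intervals, the identity holds for every real $\eta$.

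The argument involves no conceptual difficulty: the main work lies in the bookkeeping of the last step — the iterated convolutions against $e_\eta^{*(j+1)}$, the change of variables $s\mapsto t-s$, and especially the uniform-in-$\eta$ domination needed to move the infinite sum through the integral — together with the routine verification in the first step that \eqref{ode-memory} is equivalent to its Volterra integral form.
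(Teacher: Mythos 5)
Your proof is correct and follows essentially the same route as the paper's: both reduce \eqref{ode-memory} to the Volterra integral equation $w_\eta=e^{-\eta\cdot}-(e^{-\eta\cdot}*M)*w_\eta$, solve it by the convergent Neumann series (the paper phrases this as invertibility of $Id+\mathcal Q_T$ on $C([0,T])$, you as direct verification plus uniqueness), and then use commutativity/associativity of convolution together with $e_\eta^{*(j+1)}(t)=\frac{t^j}{j!}e^{-\eta t}$ and the substitution $s\mapsto t-s$ to identify the kernel with $K_M$. Your explicit justification of the sum--integral interchange is a small point the paper leaves implicit.
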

\begin{proof}
	Fix an arbitrary $\eta \in \mathbb R$.
	From (\ref{ode-memory}), it follows that
	\begin{eqnarray*}
		w_\eta^\prime(t) + \eta w_\eta(t)
		=  - M*w_\eta(t),~t\geq 0;
		~~w_\eta(0)=1.
	\end{eqnarray*}
	The above yields
	\begin{eqnarray}\label{w-eta-integral-eq-1906}
	w_\eta(t) =  e^{-\eta t} -  \int_0^t e^{-\eta(t-s)} (M*w_\eta) (s) ds
	=  e^{-\eta t}  - (e^{-\eta\cdot}*M*w_\eta)(t),\; t\geq 0.
	\end{eqnarray}
	Then we arbitrarily fix $T>0$ and define
the following operator:
	\begin{eqnarray}\label{def-QT}
	\mathcal Q_T (f) :=
	e^{-\eta\cdot}*M*f
	\;\;\mbox{for each}\;\;
	f\in C([0,T]).
	\end{eqnarray}
	One can easily check that $\mathcal Q_T$ is a linear and bounded operator on $C([0,T])$.
	By \eqref{def-QT}, we see that  for each $k\in \mathbb N^+$,
	\begin{align}\label{QT-k-expression}
	\mathcal Q_T^k(f)=\underset{k}{ \underbrace{ (e^{-\eta\cdot}*M) *\cdots* (e^{-\eta\cdot}*M) } } * f,
	~~f\in C([0,T]).
	\end{align}
	From \eqref{QT-k-expression}, one can directly check
	\begin{align*}
	\| \mathcal Q_T^k \|_{\mathcal L(C([0,T]))}
	\leq&  \| \underset{k}{ \underbrace{ (e^{-\eta\cdot}*M) *\cdots* (e^{-\eta\cdot}*M) } } \|_{L^1(0,T)}
	\nonumber\\
	\leq&  \| e^{-\eta\cdot}*M \|_{C([0,T])}^k
	\bigg( \int_0^T \int_0^{t_1} \cdots \int_0^{t_{k-1}}  dt_k \cdots dt_1
	\bigg)
	\nonumber\\
	\leq& \Big( e^{ T|\eta| }\|M\|_{L^1(0,T)}  \Big)^k
	\frac{T^k}{k!}.
	\end{align*}
	From this, we see that the series 	$\sum_{k=0}^\infty (-\mathcal Q_T)^k$ converges in $\mathcal L\big( C([0,T]) \big)$
and that
	\begin{eqnarray}\label{3.9,7.23}
	(1+\mathcal Q_T)^{-1} =  \sum_{k=0}^\infty (-\mathcal Q_T)^k.
	\end{eqnarray}
	Meanwhile, it follows from  (\ref{w-eta-integral-eq-1906}) and (\ref{def-QT}) that
	\begin{eqnarray}\label{3.9,2-16}
		w_\eta |_{[0,T]}= -\mathcal Q_T \Big( w_\eta|_{[0,T]} \Big)
		+ e^{-\eta \cdot} |_{[0,T]}.
	\end{eqnarray}
	Here, $w_\eta |_{[0,T]}$ and $e^{-\eta \cdot}|_{[0,T]}$
denote respectively the restrictions of $w_\eta$ and $e^{-\eta \cdot}$
over $[0,T]$.

Now, by \eqref{3.9,2-16} and  (\ref{3.9,7.23}), we find
	\begin{eqnarray*}\label{1906-w-QT}
		w_\eta(t)=  \Bigg(\Big(\sum_{j=0}^\infty (-\mathcal Q_T)^j\Big) \Big( e^{-\eta \cdot}|_{[0,T]} \Big)\Bigg)(t),\; t\in[0,T],
	\end{eqnarray*}
	from which and (\ref{QT-k-expression}), it follows  that
	\begin{eqnarray*}
		w_\eta(t) &=&  e^{-\eta t} + \sum_{j=1}^\infty  (-1)^j \underset{j}{ \underbrace{(e^{-\eta\cdot}*M)*\cdots*(e^{-\eta\cdot}*M)} }*e^{-\eta\cdot}(t)
		\nonumber\\
		&=& e^{-\eta t} +  \sum_{j=1}^\infty  (-1)^j
		\underset{j+1}{ \underbrace{e^{-\eta\cdot}*\cdots*e^{-\eta\cdot}} }
		*  \underset{j}{ \underbrace{M*\cdots*M} }(t),\; t\in[0,T].
	\end{eqnarray*}
	This, together with   the following equality:
	\begin{eqnarray*}
		\underset{j+1}{ \underbrace{e^{-\eta\cdot}*\cdots*e^{-\eta\cdot}} } (t)
		= e^{-\eta t} t^j/j!,~~t\geq0,~~j\in\mathbb N^+,
	\end{eqnarray*}
	shows
	\begin{eqnarray*}
		w_\eta(t) = e^{-\eta t} +  \sum_{j=1}^\infty   \int_0^t e^{-\eta s}
		\frac{ (-s)^j}{j!}
		\underset{j}{ \underbrace{M*\cdots*M} }(t-s)ds,\; t\in [0,T].
	\end{eqnarray*}
	Since $T>0$ was arbitrarily taken, the above, along with (\ref{new-kernel-KM}),
	leads to  (\ref{infinite-series-express}). This ends the proof of Lemma \ref{lem-ode-memory-infinite-series}.		
\end{proof}

The next lemma  will be used to get an asymptotic expansion of the
second term   on the right-hand side of (\ref{infinite-series-express}).

\begin{lemma}\label{lem-estimate-for-convolution}
	Given an integer  $N\geq 2$ and a number $\eta>0$, the following equality is true:
	\begin{align}\label{lem-estimate-for-convolution-ineq}
	\int_0^t e^{-\eta s}  K_M(t,s)  ds
	~=~&        e^{-\eta t} \sum_{l=0}^{N-1}
	\Big( -\partial_s^l K_M(t,s) \big|_{s=t}  \Big) \eta^{-l-1}
	+     \sum_{l=0}^{N-1} \Big( \partial_s^l K_M(t,s) \big|_{s=0}  \Big)
	\eta^{-l-1}
	\nonumber\\
	& \quad +  \eta^{-N}
	\int_0^t e^{-\eta s} \partial_s^N K_M(t,s)  ds,\;\;t\geq 0.
	\end{align}	
\end{lemma}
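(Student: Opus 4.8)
The plan is to prove the identity \eqref{lem-estimate-for-convolution-ineq} by repeated integration by parts in the $s$-variable of the integral $\int_0^t e^{-\eta s} K_M(t,s)\,ds$, treating $t$ as a fixed parameter. The key point is that each integration by parts against $e^{-\eta s}$ produces one power of $\eta^{-1}$, together with boundary contributions at $s=0$ and $s=t$, plus a new integral in which one more $s$-derivative has fallen on $K_M$. Since, by Proposition \ref{prop-KM-Ck-regularity}, $K_M\in C^\infty(S_+)$, all the derivatives $\partial_s^l K_M(t,\cdot)$ appearing below are continuous, so the procedure is legitimate and terminates after exactly $N$ steps with a clean remainder integral.

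Concretely, I would first record the elementary identity
\begin{align*}
\int_0^t e^{-\eta s} g(s)\,ds
= \eta^{-1}\Big( g(0) - e^{-\eta t} g(t) \Big)
+ \eta^{-1}\int_0^t e^{-\eta s} g'(s)\,ds,
\end{align*}
valid for $g\in C^1([0,t])$ and $\eta>0$, obtained by writing $e^{-\eta s} = -\eta^{-1}\frac{d}{ds}e^{-\eta s}$ and integrating by parts. Applying this with $g(s) = K_M(t,s)$, and then iterating with $g(s) = \partial_s K_M(t,s)$, $g(s)=\partial_s^2 K_M(t,s)$, and so on, a total of $N$ times, yields after collecting terms
\begin{align*}
\int_0^t e^{-\eta s} K_M(t,s)\,ds
= \sum_{l=0}^{N-1}\eta^{-l-1}\Big( \partial_s^l K_M(t,s)\big|_{s=0} - e^{-\eta t}\,\partial_s^l K_M(t,s)\big|_{s=t} \Big)
+ \eta^{-N}\int_0^t e^{-\eta s}\,\partial_s^N K_M(t,s)\,ds,
\end{align*}
which is exactly \eqref{lem-estimate-for-convolution-ineq} after separating the $e^{-\eta t}$-factor from the $s=0$ boundary terms. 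A short induction on the number of integrations by parts makes the bookkeeping of the finite sum precise: after $k$ steps the expression equals $\sum_{l=0}^{k-1}\eta^{-l-1}\big(\partial_s^l K_M(t,s)|_{s=0} - e^{-\eta t}\partial_s^l K_M(t,s)|_{s=t}\big) + \eta^{-k}\int_0^t e^{-\eta s}\partial_s^k K_M(t,s)\,ds$, and one more application of the elementary identity advances $k$ to $k+1$.

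There is essentially no serious obstacle here; the only things to be careful about are (a) checking that the $t=0$ case holds (there both sides collapse: the integral is $0$, and each boundary pair $\partial_s^l K_M(0,s)|_{s=0} - \partial_s^l K_M(0,s)|_{s=0}$ vanishes, while the remainder integral is also $0$), and (b) justifying that all boundary evaluations make sense, which is guaranteed by the smoothness of $K_M$ on $S_+$ from Proposition \ref{prop-KM-Ck-regularity}. The identity is purely algebraic once the integration-by-parts step is set up, so I would keep the write-up to the statement of the one-step identity, the induction, and the remark about $t=0$.
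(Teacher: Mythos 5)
Your proof is correct and takes essentially the same route as the paper: repeated integration by parts against $e^{-\eta s}$ (the paper phrases this via an auxiliary functional $\mathcal F_\eta(g):=\int_0^t e^{-\eta s}g(s)\,ds$ and the same one-step identity, then iterates), followed by the substitution $g(s)=K_M(t,s)$ with $K_M(t,\cdot)\in C^\infty([0,t])$ from Proposition~\ref{prop-KM-Ck-regularity}. The only minor difference is that you add a (harmless, correct) remark about the trivial $t=0$ case, which the paper handles implicitly.
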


\begin{proof}
	Fix $N\geq 2$, $\eta>0$ and $t>0$ arbitrarily. Given $g\in C([0,t])$, we define
	\begin{eqnarray}\label{def-mathcalP-g}
	\mathcal F_\eta (g)  := \int_0^t e^{-\eta s} g(s) ds.
	\end{eqnarray}
	We first claim
	\begin{eqnarray}\label{f-iterative-k-estimate}
	\mathcal F_\eta (g)
	&=&   \eta^{-N} \mathcal F_\eta(g^{(N)})
	+  \sum_{l=0}^{N-1} \eta^{-l-1}
	\Big(  g^{(l)}(0)  -   e^{-\eta t} g^{(l)}(t)  \Big),\;\;\mbox{when}\;\;g\in C^N([0,t]).~~~
	\end{eqnarray}
	Given  $g\in C^N([0,t])$, from (\ref{def-mathcalP-g}) and  using the integration by parts, we find
	\begin{eqnarray*}
		\mathcal F_\eta (g)  &=&  (-\eta)^{-1}  \int_0^t \frac{d}{ds} e^{-\eta s} g(s) ds
		\nonumber\\
		&=&  (-\eta)^{-1} \Big( e^{-\eta s} g(s) \Big) \Big|_{s=0}^t   +
		\frac{1}{\eta}  \int_0^t e^{-\eta s} g^\prime(s) ds
		\nonumber\\
		&=&  \eta^{-1}  \Big( g(0) -  e^{-\eta t} g(t)  \Big)
		+ \eta^{-1} \mathcal F_\eta(g^\prime).
	\end{eqnarray*}
	Now, the iterative use of the above equality (to the derivatives of $g$) gives (\ref{f-iterative-k-estimate}).

	Next, it follows by Proposition \ref{prop-KM-Ck-regularity}
	that $K_M(t,\cdot)\in C^\infty([0,t])$. Thus we can apply  (\ref{f-iterative-k-estimate}) (where $g(s)=K_M(t,s)$, $0\leq s\leq t$) to get
	\begin{eqnarray*}
		\int_0^t K_M(t,s) e^{-\eta s} ds
		&=& \eta^{-N}
		\int_0^t e^{-\eta s }
		\partial_s^N K_M(t,s) ds
		\\
		& &  +  \sum_{l=0}^{N-1} \eta^{-l-1} \bigg(
		\partial_s^l K_M(t,s)\big|_{s=0}
		- e^{-\eta t}
		\partial_s^l K_M(t,s) \big|_{s=t}
		\bigg).
		\nonumber
	\end{eqnarray*}
	This leads to (\ref{lem-estimate-for-convolution-ineq}) and completes the proof of Lemma \ref{lem-estimate-for-convolution}. 		
\end{proof}

The following Lemma \ref{lem-KM-boundary-value} will be used in the proof of  $(ii)$ in
 Theorem \ref{thm-ODE-meomery-asymptotic}.

\begin{lemma}\label{lem-KM-boundary-value}
	Let $\{h_l\}_{l\in \mathbb{N}}$ and $\{p_l\}_{l\in\mathbb{N}}$ be given by (\ref{thm-ODE-meomery-asymptotic-estimate-hypobolic}). Then for each $l\in \mathbb N$,
	\begin{align}\label{KM-Ck-equality}
	\partial_s^l K_M(t,s)|_{s=0} =  h_l(t)
	~~\mbox{and}~~
	\partial_s^l K_M(t,s)|_{s=t} = - p_l(t),~~t\geq 0.
	\end{align}
	\end{lemma}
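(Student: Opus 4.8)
The plan is to read off both identities in \eqref{KM-Ck-equality} directly from the series representation of $K_M$. Recall from the proof of Proposition \ref{prop-KM-Ck-regularity} that $K_M=\sum_{j\geq1}\mathcal M_j$ with $\mathcal M_j$ as in \eqref{def-Mj}, the series converging in $C^\infty(S_+)$, so that $\partial_s^l K_M=\sum_{j\geq1}\partial_s^l\mathcal M_j$. Specializing the Leibniz computation \eqref{Mj-alpha-beta-estimate} to $\alpha=0$, $\beta=l$ gives, for $t\geq s$,
\begin{align*}
\partial_s^l\mathcal M_j(t,s)=\frac{(-1)^l}{j!}\sum_{m=0}^{\min\{l,j\}}C_l^m\,C_j^m\,m!\,(-s)^{j-m}\left(\frac{d^{l-m}}{d\tau^{l-m}}\underset{j}{\underbrace{M*\cdots*M}}(\tau)\right)\bigg|_{\tau=t-s}.
\end{align*}
The whole proof is then the extraction of the two boundary values $s=0$ and $s=t$ from this formula and a comparison with \eqref{thm-ODE-meomery-asymptotic-estimate-hypobolic}.

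For $s=0$ the factor $(-s)^{j-m}$ annihilates every term but $m=j$, which additionally requires $j\leq l$. Since $C_l^jC_j^j\,j!/j!=C_l^j=C_l^{l-j}$, this gives $\partial_s^l\mathcal M_j(t,0)=(-1)^lC_l^{l-j}\frac{d^{l-j}}{dt^{l-j}}\underset{j}{\underbrace{M*\cdots*M}}(t)$ for $1\leq j\leq l$ and $\partial_s^l\mathcal M_j(t,0)=0$ for $j>l$. Summing over $j$ and adjoining the vacuous $j=0$ term (recall $\underset{0}{\underbrace{M*\cdots*M}}:=0$) reproduces the definition of $h_l$, which settles the first identity.

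For $s=t$ I would first record an elementary fact: the $j$-fold convolution $\underset{j}{\underbrace{M*\cdots*M}}$ vanishes at $\tau=0$ to order at least $j-1$. This follows by induction on $j$ from $f*g(\tau)=\tau\int_0^1 f((1-\sigma)\tau)g(\sigma\tau)\,d\sigma$, which shows that each convolution raises the order of vanishing by at least one (starting from order $\geq0$ for $M$). Consequently, in
\begin{align*}
\partial_s^l\mathcal M_j(t,t)=\frac{(-1)^l}{j!}\sum_{m=0}^{\min\{l,j\}}C_l^m\,C_j^m\,m!\,(-t)^{j-m}\left(\frac{d^{l-m}}{d\tau^{l-m}}\underset{j}{\underbrace{M*\cdots*M}}(\tau)\right)\bigg|_{\tau=0},
\end{align*}
only the terms with $l-m\geq j-1$ are nonzero. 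Introducing $m':=j-m$ (the exponent of $-t$) and using $C_l^m=C_l^{l-j+m'}$ together with $C_j^m m!/j!=1/m'!$, this rewrites as a sum over $\max\{0,2j-l-1\}\leq m'\leq j$ of the terms $(-1)^lC_l^{l-j+m'}\frac{(-t)^{m'}}{m'!}\frac{d^{l-j+m'}}{d\tau^{l-j+m'}}\underset{j}{\underbrace{M*\cdots*M}}(0)$.

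It remains to sum over $j$ and separate the contributions with $m'=0$ and with $m'\geq1$. The $m'=0$ terms are $(-1)^lC_l^{l-j}\frac{d^{l-j}}{d\tau^{l-j}}\underset{j}{\underbrace{M*\cdots*M}}(0)$ for those $j$ with $2j-l-1\leq0$; since the remaining $j$ contribute $0$ by the vanishing-order fact, their total is exactly $h_l(0)$. For $m'\geq1$ the constraint $\max\{0,2j-l-1\}\leq m'$ reduces to $2j-l-1\leq m'$, and any $j\geq l+2$ gives an empty range (there $2j-l-1>j$), so what survives is precisely the double sum over $m,j\in\mathbb N^+$ with $2j-l-1\leq m\leq j$ appearing in \eqref{thm-ODE-meomery-asymptotic-estimate-hypobolic}. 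Adding the two pieces yields $\partial_s^lK_M(t,t)=h_l(0)+(-1)^l\big(\cdots\big)=-p_l(t)$, which is the second identity. The main obstacle is exactly this last step of index bookkeeping---verifying that the reindexing $m\mapsto j-m$ carries $\min\{l,j\}$ and the vanishing-order constraint onto the set $\{2j-l-1\leq m\leq j\}$, and that the $m'=0$ slice assembles into $-h_l(0)$; everything else is a direct specialization of \eqref{Mj-alpha-beta-estimate}.
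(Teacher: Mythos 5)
Your proposal is correct and follows essentially the same route as the paper: specialize \eqref{Mj-alpha-beta-estimate} to $(\alpha,\beta)=(0,l)$, evaluate at $s=0$ (where only $m=j\leq l$ survives) and at $s=t$ (where the vanishing of $\frac{d^{k}}{d\tau^{k}}\underset{j}{\underbrace{M*\cdots*M}}(0)$ for $k<j-1$ kills the low-$m$ terms), then reindex $m\mapsto j-m$ and match against \eqref{thm-ODE-meomery-asymptotic-estimate-hypobolic}. The only cosmetic difference is that you rederive the vanishing-order fact by induction on the rescaled convolution formula, whereas the paper simply invokes Lemma \ref{lem-convolution-estimates}; the index bookkeeping you flag as the main obstacle is carried out correctly.
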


\begin{proof}
Recall the conventional notation:  $0^0=1$.
	First of all, we recall that  $\mathcal M_j$ is given in \eqref{def-Mj}.
	Given   $t\geq 0$, by \eqref{Mj-alpha-beta-estimate}, where $(\alpha,\beta)=(0,l)$, it follows  that
	\begin{align*}
	\partial_s^l \mathcal M_j(t,s) |_{s=0}
	=&  \frac{ (-1)^{l} }{j!} \sum_{m=0}^{ \min\{l,j\} }
	C_l^m   C_j^m m! \Big( (-s)^{j-m} \big|_{s=0} \Big)
	\Big( \frac{ d^{l-m} }{ d\tau^{l-m} } \underset{j}{ \underbrace{M * \cdots*  M} } \Big) (t)
	\nonumber\\
	=& \frac{ (-1)^{l} }{j!}
	\bigg( \chi_{[0,l]} (j) C_l^j j!   \bigg)
	\frac{ d^{l-j} }{ dt^{l-j} }  \underset{j}{ \underbrace{M * \cdots*  M} } (t),
	\end{align*}
	which, along with \eqref{KM-Mj-repsentation}, yields
	\begin{align*}
	\partial_s^l K_M(t,s) |_{s=0}
	= \sum_{j=1}^{+\infty} \partial_s^l \mathcal M_j(t,s) |_{s=0}
	= (-1)^l \sum_{j=0}^l  C_l^j
	\frac{ d^{l-j} }{ dt^{l-j} }  \underset{j}{ \underbrace{M * \cdots*  M} } (t).
	\end{align*}
	This, along with \eqref{thm-ODE-meomery-asymptotic-estimate-hypobolic}, leads to the first equality in \eqref{KM-Ck-equality}.
	
	Next, from \eqref{Mj-alpha-beta-estimate} with $(\alpha,\beta)=(0,l)$, it follows that
	\begin{align}\label{Mj-Cl-s=t}
	\partial_s^l \mathcal M_j(t,s) |_{s=t}
	=&  \frac{ (-1)^{l} }{j!} \sum_{m=0}^{ \min\{l,j\} }
	C_l^m   C_j^m m! (-t)^{j-m}
	\Big( \frac{ d^{l-m} }{ d\tau^{l-m} } \underset{j}{ \underbrace{M * \cdots*  M} } \Big) (0).
	\end{align}
	Meanwhile, by Lemma \ref{lem-convolution-estimates}, we find
	\begin{align*}
	\frac{ d^{l-m} }{ d\tau^{l-m} } \underset{j}{ \underbrace{M * \cdots*  M} } (0)
	=0,\;\;\mbox{when}\;\;l-m< j-1.
	\end{align*}
	From the above and (\ref{Mj-Cl-s=t}), we see
\begin{align*}
	\partial_s^l \mathcal M_j(t,s) |_{s=t}
	=&   (-1)^{l}  \sum_{m=0}^{ \min\{l,j\} }
	C_l^m   \frac{ (-t)^{j-m} } { (j-m)! }
    \chi_{[0,l-m]}(j-1)
	\frac{ d^{l-m} }{ d\tau^{l-m} } \underset{j}{ \underbrace{M * \cdots*  M} }  (0).
	\end{align*}
	This, along with \eqref{KM-Mj-repsentation}, yields
	\begin{align*}
	 \partial_s^l K_M(t,s) |_{s=t}
	=& \sum_{j=1}^{+\infty} \partial_s^l \mathcal M_j(t,s) |_{s=t}
	\nonumber\\
	=&  (-1)^l \sum_{j=1}^{l+1}
	\sum_{m=0}^{ \min\{l,j,l-j+1\} }
	\frac{ (-t)^{j-m} } { (j-m)! }
	\Big(  C_l^{l-m}
	\frac{ d^{l-m} }{ d\tau^{l-m} } \underset{j}{ \underbrace{M * \cdots*  M} }  (0)
	\Big).
	\end{align*}
	Replacing  $j-m$  by a new variable $q$ in the above, using \eqref{thm-ODE-meomery-asymptotic-estimate-hypobolic}, we obtain the second equality in \eqref{KM-Ck-equality}.
	
	Hence, we finish the proof of Lemma \ref{lem-KM-boundary-value}. 		
\end{proof}

We now are on the  position to prove Theorem \ref{thm-ODE-meomery-asymptotic}.

\begin{proof}[Proof of Theorem \ref{thm-ODE-meomery-asymptotic}]
	Fix $N\geq 2$ and $\eta>0$ arbitrarily. The proof is structured in three steps.

\vskip 5pt
\noindent{\it Step 1. Proof of \eqref{thm-ODE-meomery-asymptotic-estimate}.}
	
	\vskip 5pt
	By    Lemmas \ref{lem-ode-memory-infinite-series}, \ref{lem-estimate-for-convolution} and \ref{lem-KM-boundary-value}, and by  \eqref{0921-RN-good-remainder}, we find
	\begin{align*}
	w_\eta(t)
	=& e^{-\eta t} +e^{-\eta t} \sum_{l=0}^{N-1}
	\Big( -\partial_s^l K_M(t,s) \big|_{s=t}  \Big) \eta^{-l-1}
	+     \sum_{l=0}^{N-1} \Big( \partial_s^l K_M(t,s) \big|_{s=0}  \Big)
	\eta^{-l-1}
	\nonumber\\
	& \quad + \eta^{-N}  \int_0^t e^{- \eta s} \partial_{s}^N K_M(t,s)  ds
	\nonumber\\
	=& e^{-\eta t} \sum_{l=0}^{N-1}
	\Big( 1 +p_l(t) \eta^{-l-1} \Big)
	+     \sum_{l=0}^{N-1} h_l(t)  \eta^{-l-1}
	+ R_N(t,\eta) \eta^{-N-1},\; t\geq 0,
	\end{align*}
which leads to \eqref{thm-ODE-meomery-asymptotic-estimate}.

\vskip 5pt
\noindent{\it Step 2. Proof of conclusion $(i)$.}
	
	\vskip 5pt
First, we have
\begin{eqnarray}\label{3.13,11-18}
R_N\in C( \overline{\mathbb R^+} \times \overline{\mathbb R^+}).
\end{eqnarray}
Indeed, we apply Proposition \ref{prop-KM-Ck-regularity}
to see
\begin{align}\label{zhang-1027-1}
    K_M \in C^\infty(S_+).
\end{align}
From \eqref{0921-RN-good-remainder} and
 \eqref{zhang-1027-1}, \eqref{3.13,11-18} follows
 at once.

 Second, we have
	\begin{align}\label{RN-remainder-CL-regularity}
	R_N \in C( \mathbb R^+; C(\mathbb R^+) ).
	\end{align}
	Indeed, it follows from  \eqref{0921-RN-good-remainder} that when $t_2\geq t_1 > 0$,
	\begin{align*}
		& \| R_N(t_1,\cdot)  - R_N(t_2,\cdot) \|_{ C(\mathbb R^+) }
	\nonumber\\
	\leq&  \sup_{\tau>0}  \left[ \int_0^{t_1} \tau e^{-\tau t} | \partial_s^N K_M(t_1,s) -  \partial_s^N K_M(t_2,s) | ds
	+ \int_{t_1}^{t_2} \tau e^{-\tau t} | \partial_s^N K_M(t_2,s) | ds
	\right]
	\nonumber\\
	\leq&  \| \partial_s^N K_M(t_1,\cdot) -  \partial_s^N K_M(t_2,\cdot) \|_{ L^\infty(0,t_1) }  +
	\frac{1}{t_1}  \int_{t_1}^{t_2} | \partial_s^N K_M(t_2,s) | ds.
	\end{align*}
	This, along with \eqref{zhang-1027-1},
 yields (\ref{RN-remainder-CL-regularity}).

 Third,   from \eqref{0921-RN-good-remainder} and Proposition \ref{prop-KM-Ck-regularity} (with $(\alpha,\beta)=(0,N)$), we see
	\begin{align*}
	\|R_N(t,\cdot)\|_{ C(\mathbb R^+) }
	\leq& \Big( \sup_{\tau>0}   \int_0^t \tau e^{-\tau s} ds  \Big)
	\| \partial_s^N K_M(t,\cdot)\|_{L^\infty(0,t)}
	\nonumber\\
	\leq& e^t \bigg\{
	\exp\bigg[ N(1+t) \bigg(\sum_{j=0}^N \max_{0\leq s\leq t} \Big| \frac{d^j}{ds^j}M(s) \Big| \bigg) \bigg]  - 1  \bigg\},\; t\geq 0,
	\end{align*}
	which leads to \eqref{estimate-p-q-R}.

\vskip 5pt
\noindent{\it Step 3. Proof of conclusion $(ii)$.}
	
	\vskip 5pt
	
	 Fix any $t\geq 0$. We
apply  Proposition \ref{prop-KM-strong-unique} to $(t_0,s_0)=(t,0)$ and $\vec{v}=(0,1)$;
	$(t_0,s_0)=(t,t)$ and $\vec{v}=(0,1)$, respectively,
to find $j_1, j_2\in \mathbb{N}$ so that
	\begin{eqnarray}\label{3.15,7.23}
	\partial_s^{j_1} K_M (t,0) \neq 0\;\;\;
	\mbox{and}\;\;\; \partial_s^{j_2} K_M (t,t) \neq 0.
	\end{eqnarray}
	Meanwhile, by \eqref{thm-ODE-meomery-asymptotic-estimate-hypobolic} and \eqref{KM-Ck-equality}, we have
	\begin{eqnarray}\label{3.16,7.23}
	h_0\equiv0,~\partial_s^{j_1} K_M(t,0)=h_{j_1}(t)\;\;\;
	\mbox{and}\;\;\;
	\partial_s^{j_2} K_M(t,t)=-p_{j_2}(t).
	\end{eqnarray}
	Now, from (\ref{3.15,7.23}) and (\ref{3.16,7.23}), we see that
	neither $\{h_l(t)\}_{l\geq 1}$ nor $\{p_l(t)\}_{l\in\mathbb{N}}$ is the zero sequence.

This concludes  the proof of Theorem \ref{thm-ODE-meomery-asymptotic}.
\end{proof}

\section{Analysis of the memory-flow}

In this section we present several technical propositions describing the nature of each component of the decomposition
\eqref{0423-demcomposition-eq}.
 We then prove  Theorems \ref{cor-0423-demcomposition}-\ref{210320-yb-thm-flow-L2H4} one by one. At last we prove some complementary  properties of the flow and present an extension of  Theorems \ref{cor-0423-demcomposition}-\ref{210320-yb-thm-main-explanations}.

\subsection{On the components in the decomposition
 }\label{appendix-expansion}

The aim of this  subsection is to explain the meaning of each component
in the decomposition \eqref{0423-demcomposition-eq},  and discuss their regularity properties.
 This  will help us understanding the hybrid parabolic-hyperbolic behavior  of the flow more deeply.

The next Proposition \ref{pro-PN-heatlike} (which is the conclusion $(i)$ in Theorem \ref{210320-yb-thm-main-explanations})  assures the smoothing effect
of the first component  in \eqref{0423-demcomposition-eq}.
\begin{proposition}\label{pro-PN-heatlike}
Let $N\geq 2$ be an integer. Then   $\mathcal P_N(t) \mathcal H^{-\infty}\subset \mathcal H^{+\infty}$
for all $t>0$.
\end{proposition}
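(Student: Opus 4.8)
The plan is to reduce the statement to the well-known infinite-order smoothing of the heat semigroup, which controls the factor $e^{tA}$ present in both summands of $\mathcal P_N(t)$, and to observe that everything else appearing in $\mathcal P_N(t)$ is harmless on the Sobolev scale $\{\mathcal H^s\}_{s\in\mathbb R}$.

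First I would record the basic smoothing fact: for every $t>0$ and every $s\in\mathbb R$ one has $e^{tA}\mathcal H^{s}\subset\mathcal H^{+\infty}$. Indeed, writing $y_0=\sum_j a_j e_j\in\mathcal H^s$ we have $C:=\sup_j |a_j|^2\eta_j^{s}<\infty$, hence for any $s'\in\mathbb R$,
\[
\|e^{tA}y_0\|_{\mathcal H^{s'}}^2=\sum_j e^{-2\eta_j t}|a_j|^2\eta_j^{s'}\le C\sum_j e^{-2\eta_j t}\eta_j^{s'-s}<+\infty,
\]
since $\eta_j\to+\infty$ and $e^{-2\eta_j t}\eta_j^{s'-s}$ decays super-polynomially in $\eta_j$. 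Thus $e^{tA}y_0\in\bigcap_{s'\in\mathbb R}\mathcal H^{s'}=\mathcal H^{+\infty}$.

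Next I would note two elementary points. For each $l\in\mathbb N$ the operator $(-A)^{-l-1}$ maps $\mathcal H^s$ boundedly into $\mathcal H^{s+2l+2}$ (on eigenfunctions it multiplies $a_j$ by $\eta_j^{-l-1}$, and $\eta_1>0$), so in particular it preserves $\mathcal H^{-\infty}$; and the coefficients $p_l(t)$ defined in \eqref{thm-ODE-meomery-asymptotic-estimate-hypobolic} are finite real numbers for every $t\ge0$, being finite sums of derivatives of iterated convolutions of the $C^\infty$ (indeed real analytic, by $(\mathfrak C)$) kernel $M$.

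Finally, given $y_0\in\mathcal H^{-\infty}$ I would pick $s$ with $y_0\in\mathcal H^s$ and decompose
\[
\mathcal P_N(t)y_0=e^{tA}y_0+\sum_{l=0}^{N-1}p_l(t)\,e^{tA}\big((-A)^{-l-1}y_0\big).
\]
For $t>0$ the first term lies in $\mathcal H^{+\infty}$ by the smoothing fact applied to $y_0\in\mathcal H^s$; for each $l$ we have $(-A)^{-l-1}y_0\in\mathcal H^{s+2l+2}$, so $e^{tA}\big((-A)^{-l-1}y_0\big)\in\mathcal H^{+\infty}$, and multiplication by the scalar $p_l(t)$ keeps it there. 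Since $\mathcal H^{+\infty}$ is a vector space and the sum is finite, $\mathcal P_N(t)y_0\in\mathcal H^{+\infty}$, which is the claim. I do not expect any serious obstacle here; the only step requiring (routine) care is the quantitative exponential-decay estimate yielding the smoothing of $e^{tA}$, which rests solely on $\eta_j\to+\infty$.
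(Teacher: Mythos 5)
Your proof is correct and follows exactly the route the paper takes: the paper's own proof is a one-liner invoking the definition of $\mathcal P_N$ and the smoothing effect of $e^{tA}$, and you have simply written out those details (exponential decay of $e^{-\eta_j t}$, boundedness of $(-A)^{-l-1}$ on the scale, finiteness of the coefficients $p_l(t)$). The only cosmetic remark is that bounding $\sum_j e^{-2\eta_j t}|a_j|^2\eta_j^{s'}$ by $\bigl(\sup_j e^{-2\eta_j t}\eta_j^{s'-s}\bigr)\sum_j|a_j|^2\eta_j^{s}$ avoids any appeal to the growth rate of the eigenvalues, whereas your majorant $C\sum_j e^{-2\eta_j t}\eta_j^{s'-s}$ implicitly uses Weyl-type growth for its convergence.
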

\begin{proof}
	This directly follows from the definition of $\mathcal P_N(\cdot)$ (in \eqref{def-PN-HN-RN}) and the smoothing effect of the heat semigroup $\{e^{tA}\}_{t\geq 0}$. 	
\end{proof}

The next Proposition \ref{pro-HN-wavelike} (which contains the conclusion $(ii)$ in Theorem \ref{210320-yb-thm-main-explanations}) shows  the propagation of singularities
along the time direction
for
the second component $\mathcal W_N(\cdot)$. This shows the hyperbolic nature of $\mathcal W_N(\cdot)$,  with  null velocity of propagation.

\begin{proposition}\label{pro-HN-wavelike}
 Let $N\geq 2$ be an integer and let $y_0\in  \mathcal H^{-\infty}$ and $x_0\in \Omega$. Then the following
 statements are equivalent:
 	\begin{list}{}{}
 		\item[(i)]  For some $ t_0>0$,
 $\mathcal W_N(\cdot)y_0\not\in L^2_{loc}(t_0,x_0)$;
 		\item[(ii)] For each $t>0$, $\mathcal W_N(\cdot)y_0\not\in L^2_{loc}(t,x_0)$;
 		\item[(iii)] It holds that $A^{-2}y_0\not\in L^2_{loc}(x_0)$.
 	\end{list}
 	\end{proposition}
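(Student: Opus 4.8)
Throughout write $\Lambda:=-A$ (positive, with eigenvalues $\eta_j$) and recall from \eqref{thm-ODE-meomery-asymptotic-estimate-hypobolic} that $h_0\equiv0$ and $h_1=-M$, so $\mathcal W_N(t)=\sum_{l=1}^{N-1}h_l(t)\Lambda^{-l-1}$. Since $y_0\in\mathcal H^{-\infty}$, fix $s_0$ with $y_0\in\mathcal H^{s_0}$ and set $g_l:=\Lambda^{-l-1}y_0=\Lambda^{-(l-1)}(A^{-2}y_0)\in\mathcal H^{s_0+2l+2}$ for $1\le l\le N-1$, so that $g_1=A^{-2}y_0$. The basic observation is that, because the $h_l$ are real analytic on $\overline{\mathbb R^+}$ and the $g_l$ are independent of $t$, the map $t\mapsto\mathcal W_N(t)y_0=\sum_{l=1}^{N-1}h_l(t)g_l$ is a real-analytic curve taking values in the \emph{fixed} space $\mathcal H^{s_0+4}$. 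I would prove the cycle $(i)\Rightarrow(iii)\Rightarrow(ii)\Rightarrow(i)$; the implication $(ii)\Rightarrow(i)$ is immediate.

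For $(i)\Rightarrow(iii)$ I argue by contraposition: assume $A^{-2}y_0\in L^2(B(x_0,r))$. Interior elliptic regularity applied to the identity $(-\Delta)^{l-1}g_l=\pm A^{-2}y_0$ (valid in $\mathcal D'(\Omega)$, since $\Lambda^{l-1}$ acts on eigenfunctions as $\eta_j^{l-1}$) gives $g_l\in H^{2(l-1)}_{\mathrm{loc}}(B(x_0,r))\subset L^2(B(x_0,r/2))$ for every $1\le l\le N-1$. Hence $\mathcal W_N(\cdot)y_0(t,x)=\sum_{l=1}^{N-1}h_l(t)g_l(x)$ is a finite sum of locally bounded functions of $t$ times $L^2$ functions of $x$, so it lies in $L^2\big((t_0-1,t_0+1)\cap\mathbb R^+\times B(x_0,r/2)\big)$ for every $t_0>0$; thus $\mathcal W_N(\cdot)y_0\in L^2_{\mathrm{loc}}(t_0,x_0)$ for all $t_0>0$, i.e.\ $(i)$ fails.

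The crux is $(iii)\Rightarrow(ii)$, again by contraposition: suppose $\mathcal W_N(\cdot)y_0\in L^2(J\times B(x_0,r))$ with $J=(t_0-\delta,t_0+\delta)$, $t_0>0$; I must deduce $A^{-2}y_0\in L^2_{\mathrm{loc}}(x_0)$. For $\phi\in C_c^\infty(B(x_0,r))$ put $f_\phi(t):=\langle\mathcal W_N(t)y_0,\phi\rangle=\sum_{l=1}^{N-1}h_l(t)\langle g_l,\phi\rangle$. Then $f_\phi$ belongs to the \emph{finite-dimensional} space $V:=\mathrm{span}\{h_1|_J,\dots,h_{N-1}|_J\}$, on which $\|\cdot\|_{L^2(J)}$ is a genuine norm (an analytic function vanishing a.e.\ on $J$ vanishes identically); by equivalence of norms on $V$ there are constants $C_k$ with $|f^{(k)}(t_0)|\le C_k\|f\|_{L^2(J)}$ for all $f\in V$. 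On the other hand, since for a.e.\ $t\in J$ one has $\mathcal W_N(\cdot)y_0(t,\cdot)|_{B(x_0,r)}\in L^2$ with $f_\phi(t)=\int_{B(x_0,r)}\mathcal W_N(\cdot)y_0(t,x)\phi(x)\,dx$, Cauchy--Schwarz and Fubini give $\|f_\phi\|_{L^2(J)}\le\|\mathcal W_N(\cdot)y_0\|_{L^2(J\times B(x_0,r))}\,\|\phi\|_{L^2}$. Since $f_\phi^{(k)}(t_0)=\langle\sum_l h_l^{(k)}(t_0)g_l,\phi\rangle$, combining the two bounds shows that $\sum_{l=1}^{N-1}h_l^{(k)}(t_0)g_l$, restricted to $B(x_0,r)$, lies in $L^2(B(x_0,r))$ for \emph{every} $k\ge0$. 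Now invoke the linear independence of $h_1,\dots,h_{N-1}$ (next paragraph): by analyticity at $t_0$ this is equivalent to linear independence of the vectors $(h_l^{(k)}(t_0))_{k\ge0}$, $1\le l\le N-1$, so for $K$ large the matrix $(h_l^{(k)}(t_0))_{0\le k\le K,\,1\le l\le N-1}$ has full column rank and one can pick $\gamma_0,\dots,\gamma_K$ with $\sum_{k=0}^K\gamma_k h_l^{(k)}(t_0)=\delta_{l,1}$. Then $\sum_{k=0}^K\gamma_k\big(\sum_l h_l^{(k)}(t_0)g_l\big)=g_1=A^{-2}y_0$, whose restriction to $B(x_0,r)$ is therefore a finite combination of $L^2$ functions, so $A^{-2}y_0\in L^2_{\mathrm{loc}}(x_0)$ and $(iii)$ fails. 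This closes the cycle.

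The main obstacle is exactly the linear independence of $h_1,\dots,h_{N-1}$ under $(\mathfrak C)$ — i.e.\ that no nontrivial combination $\sum_{l=1}^{N-1}\beta_l h_l$ vanishes identically. This is where real-analyticity and non-triviality of $M$ are used essentially (cf.\ Remark \ref{remark1.2-2-13}(a10)): Proposition \ref{prop-KM-strong-unique} only gives that each sequence $\{h_l(t)\}_{l\ge1}$ is nonzero, which is strictly weaker, so a separate argument is needed. Using $h_l(t)=\partial_s^l K_M(t,s)|_{s=0}$ (Lemma \ref{lem-KM-boundary-value}), one would either analyze the lowest-order behaviour of $K_M(t,s)$ near $\{s=0\}$ or Laplace-transform a hypothetical relation $\sum_l\beta_l h_l\equiv0$ in $t$ to force a polynomial identity for $\widehat M$ and thereby $M\equiv0$, contradicting $(\mathfrak C)$. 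Granting this fact, the remaining ingredients — the two contrapositive reductions, interior elliptic regularity, and the finite-dimensional norm-equivalence trick — are routine.
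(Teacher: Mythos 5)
Your implication $(i)\Rightarrow(iii)$ is essentially the paper's argument (contraposition plus interior elliptic regularity to show $A^{-2}y_0\in L^2_{loc}(x_0)$ propagates to all $g_l=(-A)^{-l-1}y_0$), and $(ii)\Rightarrow(i)$ is trivial, as you say. The issue is $(iii)\Rightarrow(ii)$, where you have correctly identified that your route rests on an unproven claim, and this gap is real: nothing in the paper establishes, or even asserts, linear independence of $h_1,\dots,h_{N-1}$. The only non-degeneracy statement proved is \eqref{2021-april-hl-pl-nontrivial} (via Proposition \ref{prop-KM-strong-unique}), namely that for each fixed $t$ the sequence $\{h_l(t)\}_{l\ge1}$ is not the zero sequence; as you note, that is strictly weaker than linear independence of the first $N-1$ functions, and your finite-dimensional norm-equivalence/differentiation scheme genuinely needs the matrix $(h_l^{(k)}(t_0))_{k,l}$ to have full column rank, i.e.\ linear independence. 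Your sketched fallback (Laplace-transforming a hypothetical relation $\sum\beta_l h_l\equiv 0$) is not carried out, and it is not evident that such relations are excluded under $(\mathfrak C)$ alone, since the $h_l$ are built from iterated convolutions of $M$ and nontrivial algebraic relations among them are not obviously ruled out.

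The paper sidesteps this entirely by exploiting a piece of structure your argument discards: the $g_l$ are not independent unknowns but satisfy $g_l=A^{-(l-1)}g_1$, and $A^{-1}$ gains two derivatives. Concretely, the paper uses only $h_0\equiv 0$ and $h_1=-M\not\equiv 0$ (the single lowest-order coefficient), not the full linear independence. From $\sum_{l\ge1}h_l(\cdot)g_l\in L^2_{loc}(\hat t_0,x_0)$ it extracts, using $M\not\equiv 0$, an affine relation
\[
A^{-2}y_0+\sum_{l=2}^{N-1}c_l\,A^{-l-1}y_0\in L^2_{loc}(x_0),
\]
i.e.\ $(1-\mathcal A)(A^{-2}y_0)=:f_0\in L^2_{loc}(x_0)$ with $\mathcal A:=-\sum_{l\ge2}c_lA^{-l+1}$ a smoothing operator. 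Since $y_0\in\mathcal H^{-m}$ for some $m$, one has $\mathcal A^m(A^{-2}y_0)\in L^2(\Omega)$, and the finite Neumann-type identity
\[
A^{-2}y_0=(1+\mathcal A+\cdots+\mathcal A^{m-1})f_0+\mathcal A^m(A^{-2}y_0)
\]
together with the $L^2_{loc}(x_0)$-stability of $A^{-1}$ (the same elliptic-regularity claim you use) gives $A^{-2}y_0\in L^2_{loc}(x_0)$. This is the step you should replace your linear-independence argument with: it is more economical precisely because it treats $\mathcal W_N(t)y_0$ as a polynomial in $A^{-1}$ applied to $A^{-2}y_0$ with lowest-order coefficient $h_1(t)=-M(t)$, rather than as a combination of $N-1$ unrelated functions $g_l$.
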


\begin{proof}
		We first  claim that if $z\in \mathcal H^{-\infty}$ satisfies $z\in L^2_{loc}(x_0)$,
then
	 \begin{align}\label{A-loc-behavior}
	    A^{-1}z \in L^2_{loc}(x_0).
	 \end{align}
	The proof of \eqref{A-loc-behavior} is classical and, for the sake of completeness, we present it as follows:  Write $g:=A^{-1}z$. It is clear that $Ag \in L^2_{loc}(x_0)$. Then
	by  \eqref{selfadjoit-elliptic-operator} and some computations, we can see that $ \Delta g \in L^2_{loc}(x_0)$. Since $\Delta$ is elliptic at $x_0$, one has that $g|_{ B(x_0,r) }\in H^2 \big( B(x_0,r) \big)$ for some $r>0$ (see, for instance, \cite[Theorem 18.1.29]{Hormander-3}). Thus, $g\in L^2_{loc}(x_0)$, which gives that $A^{-1}z \in L^2_{loc}(x_0)$.

	\vskip 5pt
	 We next prove that ($i$)$\Rightarrow$($iii$).
By contradiction, we suppose that  $(i)$ is true, but	
	\begin{align}\label{initial-L2-x0-HN}
	   A^{-2}y_0\in L^2_{loc}(x_0).
	\end{align}
	Since  $M$ is analytic, it follows from (\ref{thm-ODE-meomery-asymptotic-estimate-hypobolic}) that $h_0\equiv 0$ and each $h_l$ is smooth. Then by \eqref{initial-L2-x0-HN} and \eqref{A-loc-behavior}, one has
	\begin{align*}
	\sum_{l=0}^{N-1} h_l(\cdot) (-A)^{-l-1} y_0
 = \sum_{l=1}^{N-1} h_l(\cdot) (-A)^{-l+1} (A^{-2} y_0)
\in L^2_{loc}(t,x_0)\quad\text{for each}\quad
		t> 0.
	\end{align*}
	This, along with the definition of $\mathcal W_N(\cdot)$ (in \eqref{def-PN-HN-RN}),
	implies that
	\begin{align*}
		\mathcal W_N(\cdot)y_0\in L^2_{loc}(t,x_0)
		\quad\text{for each}\quad
		t> 0,
	\end{align*}
	which 	 contradicts ($i$). Therefore, ($iii$) is true.
	
	\vskip 5pt
	We now show that ($iii$)$\Rightarrow$($ii$).
By contradiction, we suppose that  ($iii$) holds, but   $\mathcal W_N(\cdot)y_0\in L^2_{loc}(\hat t_0,x_0)$ for some $\hat t_0> 0$.
	Then from \eqref{def-PN-HN-RN}, we find
	\begin{align}\label{august-ps-weak}
		\sum_{l=0}^{N-1} h_l(\cdot) (-A)^{-l-1}  y_0 \in L^2_{loc}(\hat t_0,x_0).
	\end{align}
	We will use \eqref{august-ps-weak} to prove that
	\begin{align}\label{Sep-A-2-y0-L2-x0}
		A^{-2} y_0 \in L^2_{loc}(x_0).
	\end{align}
	When this is done, we are led to a contradiction with ($iii$), and then the statement  ($ii$) is true.

To  prove \eqref{Sep-A-2-y0-L2-x0}, we observe that there are only two  	 possibilities: either $N=2$ or $N\geq 3$. When
that $N=2$, we see from \eqref{august-ps-weak} and (\ref{thm-ODE-meomery-asymptotic-estimate-hypobolic}) that
	\begin{align*}
		-M(\cdot)A^{-2}y_0
		=\sum_{l=0}^{N-1} h_l(\cdot) (-A)^{-l-1}  y_0
		\in L^2_{loc}(\hat t_0,x_0).
	\end{align*}
	Since $M$ is nonzero, \eqref{Sep-A-2-y0-L2-x0} (for this case) follows by  integrating the above in the time variable.
	
	When $N\geq 3$, it follows from (\ref{thm-ODE-meomery-asymptotic-estimate-hypobolic})
	that $h_0=0$ and $h_1=-M$. Thus, the leading term of the sum in \eqref{august-ps-weak} is  $-M(\cdot) A^{-2}$.
	From  this and the fact that $M$ is nonzero,
	by integrating \eqref{august-ps-weak} in the time variable, we see
	\begin{align*}
	A^{-2} y_0 + \sum_{l=2}^{N-1} c_l A^{-l-1} y_0 \in L^2_{loc}(x_0)
	~~\text{for some}~~
	\{c_l\}_{l=2}^{N-1} \subset \mathbb R.
	\end{align*}
	Write $\mathcal A:= -\sum_{l=2}^{N-1} c_l A^{-l+1}$. 	
	Then the above leads to
	\begin{align}\label{Sep-def-f0-L2}
	f_0:= (1-\mathcal A) \big( A^{-2} y_0 \big) \in L^2_{loc}(x_0).
	\end{align}
	Since $y_0\in \mathcal H^{-\infty}$, there is $m\in \mathbb N^+$ so that
	$y_0 \in \mathcal H^{-m}$.
		This, along with  \eqref{Sep-def-f0-L2}, yields
	\begin{align}\label{Ay0-bary0-relation}
	A^{-2} y_0 - (1+\mathcal A+\cdots + \mathcal A^{m-1}) f_0
	= \mathcal A^{m} (A^{-2} y_0)\in L^2(\Omega).
	\end{align}
	At the same time, since $f_0 \in  L^2_{loc}(x_0)$ in \eqref{Sep-def-f0-L2}, it follows from \eqref{A-loc-behavior} that
	\begin{align*}
		\mathcal A^{j} f_0 \in  L^2_{loc}(x_0)
		~~\text{for each}~~
		j\in \mathbb N.
	\end{align*}
	 Thus, \eqref{Sep-A-2-y0-L2-x0} (for $N\geq 3$) follows from \eqref{Ay0-bary0-relation}.
Hence, \eqref{Sep-A-2-y0-L2-x0} is true.

	\vskip 5pt
	Finally, it is clear that ($ii$)$\Rightarrow$($i$).
Hence, we finish the proof of Proposition \ref{pro-HN-wavelike}.	
\end{proof}

The following Proposition \ref{pro-RN-smooth-2N+2} (which is partially  the conclusion $(iii)$ in Theorem \ref{210320-yb-thm-main-explanations})
 gives an important time-uniform smoothing effect  of the last component  $\mathfrak R_N(\cdot)$.
\begin{proposition}\label{pro-RN-smooth-2N+2}
Let $N\geq 2$ be an integer. Then the last component $\mathfrak R_N$ in \eqref{0423-demcomposition-eq}
exhibits a time-uniform  smoothing effect with a gain of $2N+2$ space derivatives:
	 for each  $y_0 \in \mathcal H^s$ with $s\in \mathbb R$,
	\begin{align}\label{Sep-RN-Aj-etA-smoothing}
		\mathfrak R_N(\cdot)y_0 \in C\big( \overline{ \mathbb R^+ };\mathcal H^{s+2N+2} \big),
		~~\text{while}~~
		A^{-j}e^{\cdot A}y_0,A^{-j}y_0 \in C\big( \overline{ \mathbb R^+ };\mathcal H^{s+2j} \big)
		~(0\leq j \leq N).
	\end{align}
	\end{proposition}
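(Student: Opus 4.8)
The plan is to establish the three assertions of \eqref{Sep-RN-Aj-etA-smoothing} separately; the statement about $\mathfrak R_N$ is the substantive one, the other two being bookkeeping with the spectral calculus.

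I would first dispose of the auxiliary terms. On the orthonormal basis $\{e_k\}$, the operator $(-A)^{-j}$ multiplies the $k$-th coefficient of $f=\sum_k a_k e_k$ by $\eta_k^{-j}$, so $\|(-A)^{-j}f\|_{\mathcal H^{r+2j}}=\|f\|_{\mathcal H^r}$ for every $r\in\mathbb R$; in particular $A^{-j}y_0\in\mathcal H^{s+2j}$, and since this vector does not depend on $t$ it lies in $C(\overline{\mathbb R^+};\mathcal H^{s+2j})$ trivially. For $A^{-j}e^{tA}y_0=e^{tA}(A^{-j}y_0)$ I would recall (cf. Proposition \ref{prop5.1}, or directly: $e^{tA}$ multiplies the $k$-th coefficient by $e^{-\eta_k t}\le1$) that $\{e^{tA}\}_{t\ge0}$ restricts to a $C_0$-semigroup of contractions on each $\mathcal H^r$, the strong continuity following from dominated convergence in the series $\sum_k|a_k|^2(e^{-\eta_k t}-1)^2\eta_k^r$; applying this with $r=s+2j$ to the datum $A^{-j}y_0\in\mathcal H^{s+2j}$ gives $A^{-j}e^{\cdot A}y_0\in C(\overline{\mathbb R^+};\mathcal H^{s+2j})$.

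For the remainder $\mathfrak R_N(t)=R_N(t,-A)(-A)^{-N-1}$, I would invoke Theorem \ref{thm-ODE-meomery-asymptotic}$(i)$: it yields $R_N\in C(\overline{\mathbb R^+}\times\overline{\mathbb R^+})$ and the bound $\|R_N(t,\cdot)\|_{C(\mathbb R^+)}\le\rho_N(t)$, where $\rho_N(t):=e^t\{\exp[N(1+t)\sum_{j=0}^{N}\max_{0\le s\le t}|\frac{d^j}{ds^j}M(s)|]-1\}$ is continuous and bounded on bounded $t$-intervals. By the functional calculus, $\mathfrak R_N(t)f=\sum_k a_k\,\eta_k^{-N-1}R_N(t,\eta_k)\,e_k$ for $f=\sum_k a_k e_k\in\mathcal H^s$, so that
\[
\|\mathfrak R_N(t)f\|_{\mathcal H^{s+2N+2}}^2=\sum_k|a_k|^2\,\eta_k^{\,s}\,|R_N(t,\eta_k)|^2\ \le\ \rho_N(t)^2\,\|f\|_{\mathcal H^s}^2 ;
\]
the factor $\eta_k^{-2N-2}$ brought in by $(-A)^{-N-1}$ is exactly what reduces the weight $\eta_k^{s+2N+2}$ back to $\eta_k^s$ --- this is the gain of $2N+2$ derivatives. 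In particular $\mathfrak R_N(t)y_0\in\mathcal H^{s+2N+2}$ for all $t\ge0$ (with $\mathfrak R_N(0)=0$, since the integral in \eqref{0921-RN-good-remainder} defining $R_N(0,\cdot)$ is over $[0,0]$).

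It remains to prove continuity in $t$. Fix $t_0\ge0$ and a bounded interval $J\ni t_0$, and put $C_J:=\sup_{t\in J}\rho_N(t)<\infty$. For $t\in J$,
\[
\|\mathfrak R_N(t)y_0-\mathfrak R_N(t_0)y_0\|_{\mathcal H^{s+2N+2}}^2=\sum_k|a_k|^2\,\eta_k^{\,s}\,|R_N(t,\eta_k)-R_N(t_0,\eta_k)|^2 .
\]
Each summand tends to $0$ as $t\to t_0$ because $t\mapsto R_N(t,\eta_k)$ is continuous (a consequence of $R_N\in C(\overline{\mathbb R^+}\times\overline{\mathbb R^+})$), and is dominated by $4C_J^2\,|a_k|^2\eta_k^{\,s}$, which is summable with sum $4C_J^2\|y_0\|_{\mathcal H^s}^2$. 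Dominated convergence then gives $\mathfrak R_N(t)y_0\to\mathfrak R_N(t_0)y_0$ in $\mathcal H^{s+2N+2}$, and since $t_0$ was arbitrary, $\mathfrak R_N(\cdot)y_0\in C(\overline{\mathbb R^+};\mathcal H^{s+2N+2})$. The only genuinely delicate point is this last interchange of the limit with the infinite sum over the spectrum: it requires a $t$-uniform, summable majorant, which is supplied precisely by the bound $\rho_N$ of Theorem \ref{thm-ODE-meomery-asymptotic}$(i)$ together with the exact $\eta_k^{-2N-2}$ smoothing coming from $(-A)^{-N-1}$. Everything else is routine bookkeeping with the spectral calculus.
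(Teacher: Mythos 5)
Your proof is correct and follows essentially the same route as the paper's: continuity of $\mathfrak R_N(\cdot)y_0$ at each $t_0$ is obtained from the uniform bound \eqref{estimate-p-q-R} on $R_N$ together with the joint continuity $R_N\in C(\overline{\mathbb R^+}\times\overline{\mathbb R^+})$, your dominated-convergence phrasing being the same as the paper's explicit $\varepsilon$--tail truncation of the spectral sum. Your treatment of the auxiliary terms $A^{-j}y_0$ and $A^{-j}e^{\cdot A}y_0$ is simply a fuller version of what the paper dismisses as ``clearly true.''
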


\begin{remark}\label{remark-heat-smoothing-effect}
There is a delicate point worth to clarifying.
Although the map $t\mapsto A^{-j} e^{tA}$, $t\geq 0$ (for a fixed $j$) has infinite order smoothing effect at positive time, it   has the finite time-uniform smoothing effect (given in \eqref{Sep-RN-Aj-etA-smoothing})
 with only $2j$ space-derivatives gained. This index $2j$ is optimal due to the following fact: for  given $y_0 \in \mathcal H^{-\infty}$ and $s\in \mathbb R$, $A^{-j}e^{\cdot A}y_0 \in C\big( \overline{ \mathbb R^+ };\mathcal H^{s+2j} \big)$ if and only if $y_0 \in \mathcal H^s$, which can be directly checked.
\end{remark}

\begin{proof}[Proof of Proposition \ref{pro-RN-smooth-2N+2}]
		The second statement in \eqref{Sep-RN-Aj-etA-smoothing} is clearly true.
We now show  the first statement in \eqref{Sep-RN-Aj-etA-smoothing}.
To this end, we arbitrarily fix  $s\in \mathbb R$, $y_0 \in \mathcal H^s$
and $t_0\geq 0$. We aim to show the continuity of $\mathfrak R_N(\cdot)y_0$ at time $t_0$. 	 Fix any  $\varepsilon>0$. Write $y_0=\sum_{j=1}^{\infty} y_{0,j} e_j$. Since $y_0\in \mathcal H^s$, we can choose  $j_{\varepsilon} \in \mathbb N^+$
large enough so that
	$\sum_{j\geq j_{\varepsilon} } \eta_j^s y_{0,j}^2  < \varepsilon^2$.
		This, along with the definitions of $\mathfrak R_N(\cdot)$ (see \eqref{def-PN-HN-RN}) and
$R_N$ (see \eqref{0921-RN-good-remainder}),
 yields that for each $t\in [0,t_0+1]$,
	\begin{align*}
		&\| \mathfrak R_N(t)y_0 - \mathfrak R_N(t_0)y_0\|_{\mathcal H^{s+2N+2} }^2
		\nonumber\\
		=& \| R_N(t,-A)y_0 - R_N(t_0,-A)y_0\|_{\mathcal H^{s} }^2
\nonumber\\
		=& \sum_{j\geq 1}  \big| R_N(t,\eta_j) - R_N(t_0,\eta_j) \big|^2 y_{0,j}^2 \eta_j^s
		\nonumber\\
		\leq& \max_{1\leq j \leq j_{\varepsilon} }  \big| R_N(t,\eta_j) - R_N(t_0,\eta_j) \big|^2
		\Big( \sum_{j\geq 1} y_{0,j}^2 \eta_j^s \Big)
		+ 4\| R_N \|_{C([0,t_0+1] \times \mathbb R^+ )}^2
		\varepsilon^2.
	\end{align*}
	 From this and \eqref{estimate-p-q-R}, one can find some $C>0$ (independent of $\varepsilon$) and $\delta\in (0,1)$  so that
	\begin{align*}
	\| \mathfrak R_N(t)y_0 - \mathfrak R_N(t_0)y_0\|_{\mathcal H^{s+2N+2} }^2
	\leq C
	\varepsilon^2,~~\mbox{when}\;\; t\in (t_0-\delta,t_0+\delta)\cap \mathbb R^+.
	\end{align*}
	This leads to the continuity of $\mathfrak R_N(\cdot)y_0$ at time $t_0$ and ends the proof
of Proposition \ref{pro-RN-smooth-2N+2}.	
\end{proof}

 \subsection{Proof of main theorems}
  We now give the proofs of Theorems \ref{cor-0423-demcomposition}-\ref{210320-yb-thm-flow-L2H4} one by one.

\begin{proof}[Proof of Theorem \ref{cor-0423-demcomposition}]
First of all, \eqref{2021-april-hl-pl-nontrivial} (with $t>0$) follows from the conclusion $(ii)$ of Theorem \ref{thm-ODE-meomery-asymptotic} at once.

The remainder is to show \eqref{0423-demcomposition-eq}.  	
Fix $s\in \mathbb R$
	and $y_0 \in \mathcal H^s$. Recall that $\eta_j$ is the $j^{th}$ eigenvalue
 of $-A$ and $e_j$ is the corresponding normalized eigenfunction in $L^2(\Omega)$. Then we can write
		\begin{align}\label{4.410-18}
	y_0= \sum_{j\geq 1}  y_{0,j} e_j
	\;\;\mbox{and}\;\;
	y(t;y_0)= \sum_{j\geq 1}  y_j(t) e_j,~~t\geq 0,
	\end{align}
 where $ y_{0,j}:=\langle y_0, e_j \rangle_{\mathcal H^{s},\mathcal H^{-s} }$ and
	$y_j(\cdot)$ satisfies
	\begin{align}\label{4.4,7.24}
	y_j^\prime(t) + \eta_j y_j(t)  +  \int_0^t M(t-\tau) y_j(\tau) d\tau =0,~~t>0;
	~~~y_j(0)=y_{0,j}.
	\end{align}
	By 	(\ref{4.4,7.24}) and (\ref{ode-memory}), we see that for each $j\in\mathbb N^+$,
	\begin{align}\label{ode-pde-memory-relation}
	y_j(t) = y_{0,j}  w_{\eta_j} (t),~~ t\geq 0,
	\end{align}
	where $w_{\eta_j}(\cdot)$ is the solution to (\ref{ode-memory}) with $\eta=\eta_j$.
 	Then by \eqref{4.410-18} and \eqref{ode-pde-memory-relation}, we have
	\begin{align}\label{4.710-18}
	y(t;y_0)=\sum_{j\geq 1}   w_{\eta_j} (t) y_{0,j}  e_j,~~ t\geq 0,
	\end{align}
	From \eqref{4.710-18} and  \eqref{thm-ODE-meomery-asymptotic-estimate} (in Theorem \ref{thm-ODE-meomery-asymptotic}), we see that
when $t\geq 0$,
	\begin{align}\label{5.4,7.25}
	y(t;y_0)=\sum_{j\geq 1}
	\bigg[ e^{-\eta_j t} \bigg(1 + \sum_{l=0}^{N-1} p_{l}(t)  \eta_j^{-l-1}  \bigg)
	+   \sum_{l=0}^{N-1} h_{l}(t)  \eta_j^{-l-1}
	+   R_N(t,\eta_j) \eta_j^{-N-1}
	\bigg]
	y_{0,j}  e_j.
	\end{align}
	Meanwhile, by  functional calculus, we have 	
	\begin{align}\label{5.5,7.25}
	\mbox{RHS of}\; \eqref{5.4,7.25}=\Big[ \Big(e^{tA}   + e^{tA} \sum_{l=0}^{N-1} p_{l}(t)  (-A)^{-l-1}  \Big)
	+   \sum_{l=0}^{N-1} h_{l}(t)  (-A)^{-l-1}
	+   R_N(t,-A) (-A)^{-N-1}
	\Big]y_0.
	\end{align}
	Here, RHS of \eqref{5.4,7.25} denotes the expression on the right-hand side of \eqref{5.4,7.25}.
	
	Since $y_0$ was arbitrarily taken from $\mathcal H^s$,
	we can use  \eqref{5.4,7.25}, \eqref{5.5,7.25}, \eqref{def-PN-HN-RN} and Proposition \ref{prop5.1}
(in Appendix)
	to get   \eqref{0423-demcomposition-eq}.
	This concludes the proof of Theorem \ref{cor-0423-demcomposition}.	
\end{proof}

\begin{proof}[Proof of Theorem \ref{210320-yb-thm-main-explanations}]
 First of all, the conclusions $(i)$-$(ii)$ in Theorem \ref{210320-yb-thm-main-explanations}
follow from  Proposition \ref{pro-PN-heatlike} and Proposition \ref{pro-HN-wavelike},  respectively. In what follows, the conclusions $(iii)$-$(iv)$ in Theorem \ref{210320-yb-thm-main-explanations}
will be proved one by one.

\vskip 5pt
\noindent\textit{
Step 1. The proof of the conclusion $(iii)$ in Theorem \ref{210320-yb-thm-main-explanations}
}

By Proposition \ref{pro-RN-smooth-2N+2}, we only need to show
\eqref{RN-property-regularity}.
For this purpose, we first claim
\begin{align}\label{yb-202104-RN-regularity}
R_N(\cdot,-A)|_{\mathbb R^+}
\in C(\mathbb R^+;\mathcal L( \mathcal H^s))
~\text{for each}~
s\in \mathbb R.
\end{align}
To prove \eqref{yb-202104-RN-regularity}, we arbitrarily fix  $s\in \mathbb{R}$ and $t_2\geq t_1>0$.
Set
\begin{align}\label{5.6,7.25}
\widetilde{\mathcal R}(-A) := R_N(t_2,-A) - R_N(t_1,-A).
\end{align}
Then $\widetilde{\mathcal R}(-A)$ is the operator obtained by the functional calculus of the function:
$R_N(t_2,\cdot) - R_N(t_1,\cdot)$. By \eqref{def-space-with-boundary-condition} and by the spectral representation of $\widetilde{\mathcal R}(-A)$, we see
\begin{align*}
\| \widetilde{\mathcal R} (-A)  \|_{ \mathcal L(\mathcal H^s) }^2
= \sup_{ \sum \eta_j^{s} z_j^2 \leq 1}
\Big\|
\sum_{j\geq 1} \widetilde{\mathcal R} (\eta_j) z_j e_j \Big\|_{ \mathcal H^s }^2
= \sup_{ \sum \eta_j^{s} z_j^2 \leq 1}
\sum_{j=1}^{+\infty}  \eta_j^{s} 		
\big| \widetilde{\mathcal R} (\eta_j) \big|^2 z_j^2
\leq \sup_{\eta >0}  \big| \widetilde{\mathcal R} (\eta) \big|^2.
\end{align*}
This, along with \eqref{5.6,7.25}, yields
\begin{align}\label{5.7,7.25}
\|R_N(t_2,-A) - R_N(t_1,-A) \|_{ \mathcal L(\mathcal H^s) }
= \| \widetilde{\mathcal R} (-A)  \|_{ \mathcal L(\mathcal H^s) }
\leq  \| R_N(t_2,\cdot) - R_N(t_1,\cdot) \|_{C(\mathbb R^+)}.
\end{align}
Since
$R_N \in C(\mathbb R^+; C(\mathbb R^+) )$
(see
Theorem \ref{thm-ODE-meomery-asymptotic}),
\eqref{yb-202104-RN-regularity} follows from \eqref{5.7,7.25} at once.

We now show  \eqref{RN-property-regularity}. Indeed, arguing as in the proof of \eqref{5.7,7.25}, we can obtain
$$
\|R_N(t,-A)\|_{ \mathcal L(\mathcal H^s) }\leq
\| R_N(t,\cdot)\|_{C(\mathbb R^+)},\;\;t\geq 0.
$$
This, along with \eqref{estimate-p-q-R} (in Theorem \ref{thm-ODE-meomery-asymptotic}), leads to \eqref{RN-property-regularity}.

Hence,   the conclusion $(iii)$ in Theorem \ref{210320-yb-thm-main-explanations}
is true.

\vskip 5pt
\noindent\textit{
	Step 2. The proof of the conclusion $(iv)$ in Theorem \ref{210320-yb-thm-main-explanations}
}

Arbitrarily fix $y_0\in  \mathcal H^{-\infty}$,  $x_0\in \Omega$ and $t>0$.
There is an integer $m\geq 2$ so that
\begin{align*}
y_0 \in \mathcal H^{-2m}.
\end{align*}
Then 	by  (\ref{0423-demcomposition-eq}) and \eqref{RN-property-regularity} (where $N$ is replaced by $m$),
one can easily check
\begin{eqnarray}\label{tgy-june-4clock-1}
\varPhi(\cdot)y_0 - \mathcal P_m(\cdot)y_0
- \mathcal W_m(\cdot) y_0
\in L^\infty_{loc}( \overline{ \mathbb R^+ } ; L^2(\Omega) ).
\end{eqnarray}
Combine \eqref{tgy-june-4clock-1} and Proposition \ref{pro-PN-heatlike} (where  $N$ is replaced by $m$) to get
\begin{align}\label{Sep-varPhi-Hm-L2}
\varPhi(\cdot)y_0 \not\in L^2_{loc}(t,x_0)
\Leftrightarrow
\mathcal W_m(\cdot)y_0 \not\in L^2_{loc}(t,x_0).
\end{align}
Meanwhile,
for each integer $k\geq 2$,
it follows by Proposition \ref{pro-HN-wavelike} (where $N$ is replaced by $k$) that
\begin{align*}
\mathcal W_k(\cdot)y_0 \not\in L^2_{loc}(t,x_0)
\Leftrightarrow
A^{-2}y_0 \not\in L^2_{loc}(x_0),
\end{align*}
in particular,
\begin{align*}
\mathcal W_m(\cdot)y_0 \not\in L^2_{loc}(t,x_0)
\Leftrightarrow
\mathcal W_N(\cdot)y_0 \not\in L^2_{loc}(t,x_0).
\end{align*}
This, along with \eqref{Sep-varPhi-Hm-L2}, yields
	\begin{align}\label{varPhi-HN-equivalence}
\varPhi(\cdot)y_0\not\in L^2_{loc}(t,x_0)  \Leftrightarrow
\mathcal W_N(\cdot)y_0\not\in L^2_{loc}(t,x_0).
\end{align}
Thus, the conclusion $(iv)$ follows from \eqref{varPhi-HN-equivalence} and Proposition \ref{pro-HN-wavelike}
at once.

\vskip 5pt
Hence, we complete the proof of Theorem \ref{210320-yb-thm-main-explanations}.
\end{proof}

%
%
%
%

 \begin{proof}[Proof of  Theorem \ref{210320-yb-thm-main-high-frequency-behaviors}]
Arbitrarily fix an integer $N\geq 2$.
We first show \eqref{yb-march-initial-behaviors-limit}. Indeed, one can see from  \eqref{varPhi-y-y0},  \eqref{def-PN-HN-RN} and \eqref{0921-RN-good-remainder} that for each $j\in \mathbb N^+$,
\begin{align*}
\begin{array}{l}
\varPhi(0)e_j=e_j,~
\mathcal P_N(0)e_j=\Big( 1+ \sum_{l=0}^{N-1} p_l(0) \eta_j^{-l-1} \Big) e_j,
\\
\mathfrak R_N(0)e_j=0,~
\mathcal W_N(0)e_j=\Big( \sum_{l=0}^{N-1} h_l(0) \eta_j^{-l-1} \Big) e_j.
\end{array}
\end{align*}
Since $\displaystyle\lim_{j\rightarrow+\infty} \eta_j=+\infty$,
the above leads to
\eqref{yb-march-initial-behaviors-limit} at once. 


We next prove \eqref{210320-yb-positive-behaviors-limit}.
Arbitrarily fix $t>0$.
On one hand,
it follows
from  \eqref{thm-ODE-meomery-asymptotic-estimate-hypobolic} that $h_0\equiv 0$ and $h_1(t)=-M(t)$, which, along with \eqref{def-PN-HN-RN}, yield
\begin{align*}
  \mathcal W_N(t)e_{j}= -M(t) \eta_j^{-2} e_{j}
  + \sum_{1<l\leq N-1} h_l(t) \eta_j^{-l-1} e_{j}\;\;\mbox{for all}\;\;j\geq 1.
\end{align*}
This implies
\begin{align}\label{210322-thm2-2ndEq-2.1}
   \lim_{j\rightarrow+\infty} \|\mathcal W_N(t)e_{j}\|_{\mathcal H^4}
   = |M(t)|
   ~~\text{and}~~
   \lim_{j\rightarrow+\infty} \|\mathcal W_N(t)e_{j}\|_{\mathcal H^s}=0,~\forall\, s<4.
\end{align}
On the other hand, from \eqref{def-PN-HN-RN}, it follows that for each $j\geq 1$,
\begin{align*}
\mathcal P_N(t)e_{j}=  e^{-t\eta_j} \Big(1+ \sum_{l=0}^{N-1} p_l(t) \eta_j^{-l-1} \Big)  e_j  ~~\text{and}~~
\mathfrak R_N(t)e_j= \eta_j^{-N-1} R_N(t,-A)  e_j.
\end{align*}
Since $\displaystyle\lim_{j\rightarrow+\infty} \eta_j^{-1}=
\displaystyle\lim_{j\rightarrow+\infty} \eta_j^{\frac{s}{2}} e^{-t\eta_j}
=0$ ($s\in \mathbb R$),
 the above, together with \eqref{RN-property-regularity}, gives
\begin{align}\label{210322-thm2-2ndEq-2.2}
 \displaystyle\lim_{j\rightarrow+\infty} \|\mathcal P_N(t)e_{j}\|_{\mathcal H^{s_1}}
 = \displaystyle\lim_{j\rightarrow+\infty} \|\mathfrak R_N(t)e_{j}\|_{\mathcal H^{s_2}}
=0,~s_1\in \mathbb R,~s_2<2N+2.
\end{align}

 Now, by  \eqref{210322-thm2-2ndEq-2.1} and \eqref{210322-thm2-2ndEq-2.2}, we can use the decomposition
\eqref{0423-demcomposition-eq} (with the above $N\geq 2$) to get
\begin{align*}
\lim_{j\rightarrow+\infty} \|\varPhi(t)e_{j}\|_{\mathcal H^4}
= |M(t)|
~~\text{and}~~
\lim_{j\rightarrow+\infty} \|\varPhi(t)e_{j}\|_{\mathcal H^s}=0,~\forall\, s<4.
\end{align*}
These, along with \eqref{210322-thm2-2ndEq-2.1}-\eqref{210322-thm2-2ndEq-2.2}, lead to \eqref{210320-yb-positive-behaviors-limit}.
 This
completes the proof of Theorem \ref{210320-yb-thm-main-high-frequency-behaviors}.
\end{proof}

 \begin{proof}[Proof of Theorem \ref{210320-yb-thm-flow-L2H4}]
  Arbitrarily fix $s\in \mathbb{R}$. We will prove the conclusions $(i)$-$(iii)$  one by one.

\vskip 5pt
(i)  Arbitrarily fix  $\alpha\in[0,4]$ and $t>0$.
We apply \eqref{0423-demcomposition-eq} (with $N=2$), as well as \eqref{def-PN-HN-RN} and \eqref{thm-ODE-meomery-asymptotic-estimate-hypobolic}, to obtain
\begin{align}\label{march-phi-N=2-1}
\varPhi(t)&= \Big( e^{tA} -p_0(t) A^{-1} e^{tA} + p_1(t) A^{-2} e^{tA} \Big) -
M(t)A^{-2}
+ R_2(t,-A) (-A)^{-3}.
\end{align}
Meanwhile, notice that
\begin{align*}
\| A^{\frac{\alpha}{2} -j } e^{tA} \|_{\mathcal L( \mathcal H^s )}
\Big(=\sup_{j\in \mathbb N^+} \eta_j^{\frac{\alpha}{2} -j } e^{-t\eta_j} \Big)
\leq t^{j-\frac{\alpha}{2} }
   ~\text{and}~
   \| A^{\frac{\alpha}{2} -j -2} \|_{\mathcal L( \mathcal H^s )} \leq \eta_1^{\frac{\alpha}{2} -j -2}, ~j=0,1,2.
\end{align*}
These, along with \eqref{march-phi-N=2-1}, yield
\begin{align*}
   \| \varPhi(t)  \|_{\mathcal L(\mathcal H^s,\mathcal H^{s+\alpha})}
   \leq  t^{-\frac{\alpha}{2} }\Big( 1 +  t|p_0(t)| + t^2|p_1(t)|  \Big)
   + \eta_1^{\frac{\alpha}{2} -2 } |M(t)|+ \eta_1^{\frac{\alpha}{2} -3 }  \|R_2(t,\cdot)\|_{ \mathcal L( \mathcal H^{s+\alpha} )}.
\end{align*}
After direct computations, we obtain \eqref{march-flow-H4-estimate} from the above, \eqref{thm-ODE-meomery-asymptotic-estimate-hypobolic} and \eqref{RN-property-regularity} (where $N=2$).

\vskip 5pt
(ii) Assume that \eqref{march-flow-H4-estimate-1} holds for $t\in (0,\delta_0)$.
We first claim
\begin{align}\label{march-flow-smoothing-order}
 +\infty>\limsup_{j\rightarrow+\infty} \eta_j^{\frac{\alpha_0}{2}-2},
 ~~\text{i.e.,}~~ \alpha_0\leq 4.
\end{align}
Indeed, by the assumption $(\mathfrak C)$, we can choose small $t_0\in(0,\delta_0)$ so that
\begin{align}\label{yb-202104-M(t0)-nonzero}
M(t_0)\neq 0.
\end{align}
Then from \eqref{march-phi-N=2-1} and \eqref{RN-property-regularity} (where $N=2$), it follows that for each $j\geq 1$,
\begin{align*}
  \| \varPhi(t_0) \|_{\mathcal L(\mathcal H^s,\mathcal H^{s+\alpha_0}) }
  \geq\| \varPhi(t_0) e_j\|_{ \mathcal H^{s+\alpha_0}}   / \|e_j\|_{\mathcal H^s}
  = \eta_j^{\frac{\alpha_0}{2}-2} \Big(
  |M(t_0)| + O(\eta_j^{-1})  \Big),
\end{align*}
(Here and what follows,  by $O(\eta_j^{-1})$ with $j\geq 1$, we mean that there is $C_1>0$, independent of $j$, so that $| O(\eta_j^{-1}) | \leq C_1 \eta_j^{-1} $.) This, along with  \eqref{march-flow-H4-estimate-1} (where $t=t_0$), \eqref{yb-202104-M(t0)-nonzero}
and the fact that $\displaystyle\lim_{j\rightarrow+\infty} \eta_j=+\infty$, leads to
\eqref{march-flow-smoothing-order}.

We next claim
 \begin{align}\label{march-flow-blowup-order-2}
   \displaystyle\liminf_{t\rightarrow 0^+}  t^{\frac{\alpha_0}{2}} \| \varPhi(t) \|_{\mathcal L(\mathcal H^s,\mathcal H^{s+\alpha_0})} >0.
 \end{align}
 In fact, from \eqref{march-flow-H4-estimate-1} (where $z=e_j$), \eqref{march-phi-N=2-1} and \eqref{RN-property-regularity}, we
 see that when $t\in(0,\delta_0)$ and $j\in \mathbb N^+$,
 \begin{align}\label{march-flow-blowup-order-1}
      t^{\frac{\alpha_0}{2}} \| \varPhi(t) \|_{\mathcal L(\mathcal H^s,\mathcal H^{s+\alpha_0})}
     \geq&
      t^{\frac{\alpha_0}{2}} \| \varPhi(t) e_j\|_{\mathcal H^{s+\alpha_0}}
      / \|e_j\|_{\mathcal H^s}
      \nonumber\\
    \geq& (t\eta_j)^{\frac{\alpha_0}{2}}  \Big( \big| 1+ O(\eta_j^{-1}) \big|
      e^{-t\eta_j }
      -   O(\eta_j^{-1})   \Big).
 \end{align}

 Meanwhile, it follows by Weyl's asymptotic formula for the eigenvalues (see for instance \cite[XIII.15]{SIMON2})
 that  $\displaystyle\lim_{j\rightarrow+\infty} \eta_{j+1}/\eta_j=1$. Thus,
 there is $j_0\in \mathbb N^+$ so that
 \begin{align*}
    [\eta_{j_0},+\infty) = \cup_{j\geq j_0} [\eta_j, 2\eta_j).
 \end{align*}
 Therefore, for each  $t\in (0,\eta_{j_0}^{-1}]$, there is an integer $j_t\geq j_0$ so that
 \begin{align*}
    t^{-1}\in [\eta_{j_t}, 2\eta_{j_t}),
    ~\text{i.e.,}~
    \frac{1}{2} < t \eta_{j_t} \leq 1.
 \end{align*}
 This, along with \eqref{march-flow-blowup-order-1}, leads to  \eqref{march-flow-blowup-order-2}.

 Finally, the conclusion $(ii)$ follows from  \eqref{march-flow-smoothing-order} and \eqref{march-flow-blowup-order-2}
 at once.

\vskip 5pt
(iii) We first claim
\begin{align}\label{yb-202104-solution-continutity-in-H4}
  \varPhi(\cdot) y_0 \in C(\mathbb R^+; \mathcal H^{s+4})
  ~\text{for each}~
  y_0\in \mathcal H^s.
\end{align}
To this end, we arbitrarily fix  $y_0\in \mathcal H^s$, $t_0>0$ and $\varepsilon>0$. Since $y_0\in \mathcal H^s$, there is
$j_{\varepsilon} \in \mathbb N^+$ so that
\begin{align}\label{yb-202104-y0-y0j-error}
 \| y_0 - y_{0,j_{\varepsilon}} \|_{\mathcal H^s} < \varepsilon
 ~\text{where}~
 y_{0,j_{\varepsilon}} := \sum_{j=1}^{j_{\varepsilon}}
 \langle y_0,e_j \rangle_{\mathcal H^s, \mathcal H^{-s}} e_j.
\end{align}
At the same time,  one can easily check $\varPhi(\cdot) y_{0,j_{\varepsilon}} \in C(\mathbb R^+; \mathcal H^{s+4})$, which implies there is  $\delta_{\varepsilon} \in (0,t_0/2)$ so that
\begin{align*}
\sup_{t_0-\delta_{\varepsilon}<t< t_0+\delta_{\varepsilon}}
 \| \varPhi(t) y_{0,j_{\varepsilon}}  -  \varPhi(t_0) y_{0,j_{\varepsilon}}  \|_{\mathcal H^{s+4}}  < \varepsilon.
\end{align*}
This, together with  \eqref{yb-202104-y0-y0j-error}, gives
\begin{align*}
\sup_{t_0-\delta_{\varepsilon}<t< t_0+\delta_{\varepsilon}}
\| \varPhi(t) y_{0}  -  \varPhi(t_0) y_{0}  \|_{\mathcal H^{s+4}}  <
\varepsilon   +   2 \sup_{t_0/2\leq \tau\leq 2t_0}  \| \varPhi(\tau)\|_{\mathcal L(\mathcal H^s, \mathcal H^{s+4})} \varepsilon,
\end{align*}
which, along with \eqref{march-flow-H4-estimate}, leads to  \eqref{yb-202104-solution-continutity-in-H4}.

We next claim that for each  $\alpha>4$,
\begin{align}\label{yb-202104-tj-solution-not-regular}
 \varPhi(\cdot) \hat y_0 \not\in C(\mathbb R^+; \mathcal H^{s+\alpha})
 ~\text{for some}~
 \hat y_0\in \mathcal H^s.
\end{align}
For this purpose, we arbitrarily fix  $\alpha>4$.
Since $h_0(t)\equiv 0$ and $h_1(t)=-M(t)$ (see \eqref{thm-ODE-meomery-asymptotic-estimate-hypobolic}),
we apply \eqref{0423-demcomposition-eq} (with $N=2$), as well as \eqref{def-PN-HN-RN} and \eqref{RN-property-regularity}, to obtain
\begin{align}\label{4.30,4-18}
   \lim_{j\rightarrow +\infty}  \eta_j^2\langle \varPhi(t) e_j, e_j  \rangle_{L^2(\Omega)}
   = -M(t)
   ~\text{for each}~
   t>0.
\end{align}
Meanwhile, by the assumption ($\mathfrak C$), we can choose  $\hat t_0>0$ so that $M(\hat t_0)\neq 0$.
This, along with \eqref{4.30,4-18} and the fact that  $\alpha>4$,  yields
\begin{align*}
   \eta_j^{\frac{\alpha}{2}} |\langle \varPhi(\hat t_0) e_j, e_j  \rangle_{L^2(\Omega)}|
  \rightarrow +\infty
  ~\text{as}~
  j\rightarrow+\infty.
\end{align*}
Thus we can choose a subsequence  $\{k_j\}_{j\geq 1}$ of $\mathbb N^+$ so that
\begin{align}\label{yb-202104-tj-solution-not-regular-pf-1}
   \eta_{k_j}^{\frac{\alpha}{2}} |\langle \varPhi(\hat t_0) e_{k_j}, e_{k_j}  \rangle_{L^2(\Omega)}|
   \geq j2^j  ~\text{for each}~  j\geq 1.
\end{align}

We now define $\hat y_0\in \mathcal H^s$ as follows:
\begin{align}\label{yb-202104-tj-solution-not-regular-pf-2}
 \hat y_0  := \sum_{j\geq 1}  2^{-j} \eta_{k_j}^{-\frac{s}{2}}  e_{k_j}.
\end{align}
Then, by  \eqref{yb-202104-tj-solution-not-regular-pf-1} and \eqref{yb-202104-tj-solution-not-regular-pf-2},
we find
\begin{align*}
 \| \varPhi(\hat t_0) \hat y_0 \|_{\mathcal H^{s+\alpha}}^2
 =&  \sum_{j\geq 1}  \eta_{k_j}^{ s+\alpha }
\Big( \langle \varPhi(\hat t_0) e_{k_j}, e_{k_j} \rangle_{L^2(\Omega)}
\langle \hat y_0,e_{k_j} \rangle_{\mathcal H^s, \mathcal H^{-s}} \Big)^2
\nonumber\\
=& \sum_{j\geq 1}
\Big( \eta_{k_j}^{\frac{\alpha}{2}} \langle \varPhi(\hat t_0) e_{k_j}, e_{k_j}  \rangle_{L^2(\Omega)}  2^{-j}
 \Big)^2
 \geq \sum_{j\geq 1} j^2
 = +\infty ,
\end{align*}
which implies $\varPhi(\hat t_0) \hat y_0 \not\in \mathcal H^{s+\alpha}$.
This leads to \eqref{yb-202104-tj-solution-not-regular}.

\vskip 5pt

 In conclusion, we complete the proof of  Theorem \ref{210320-yb-thm-flow-L2H4}.
\end{proof}

\begin{remark}\label{remark-yb-202104-special-points}	
By a very similar way to that used in the proof of  \eqref{march-flow-H4-estimate} (in Theorem \ref{210320-yb-thm-flow-L2H4}), we can also show what follows:
For each $t>0$,
\begin{align}\label{4.33,4-18}
	 \varPhi(t) \in \mathcal L(\mathcal H^s, \mathcal H^{s+2k(t)+2})
	 ~\text{for each}~ s\in \mathbb R,
	\end{align}
where $k(t):=\min\{ l \geq 1 ~:~   h_l(t) \neq 0\}$.

From \eqref{4.33,4-18} and the fact that $h_1(t)=-M(t)$ (which follows from \eqref{thm-ODE-meomery-asymptotic-estimate-hypobolic}),
we conclude that
{\it the smoothing effect of the flow $\varPhi(t)$ at points  in the set $\{t>0~:~M(t)=0\}$
    is better than that at points in the set
        $\{t>0~:~M(t)\neq 0\}$.}

\end{remark}

\subsection{Other properties of the flow and the components}

This subsection presents more properties of the flow and the components.
More precisely, first, we formulate Propositions \ref{pro-varPhi-dominatings-by-PN}
 to   illustrate
 how the component $\mathcal P_N$ influences the flow,
from the perspective of the singularities; (In plain language, it tells us that the singularity of
$\mathcal P_N(0)$ determines the singularity of $\varPhi(0)$.);
second, we give Proposition \ref{prop-PW-nonprojections} to show that both $\mathcal P_N(0)$
and $\mathcal W_N(0)$ are not projections;
last, we present Propositions \ref{prop-varPhi-expression}-\ref{cor-evolution-high-regularity},
which  might have independent interest.

\begin{proposition}\label{pro-varPhi-dominatings-by-PN}
	Let $N\geq 2$ be an integer and let $y_0\in  \mathcal H^{-\infty}$ and   $x_0\in \Omega$. Then
	  $(y_0=)\,\varPhi(0)y_0\not\in L^2_{loc}(x_0)$ if and only if $\mathcal P_N(0)y_0\not\in L^2_{loc}(x_0)$.
\end{proposition}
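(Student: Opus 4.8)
The starting point is that $\varPhi(0)=\mathrm{Id}$ and $\mathfrak R_N(0)=0$, so the decomposition \eqref{0423-demcomposition-eq} reads $y_0=\mathcal P_N(0)y_0+\mathcal W_N(0)y_0$ for every $y_0$ (this is the identity recorded in item (a9) of Remark \ref{remark1.2-2-13}). Hence the asserted equivalence is the same as: $y_0\in L^2_{loc}(x_0)$ if and only if $g:=\mathcal P_N(0)y_0=y_0-\mathcal W_N(0)y_0\in L^2_{loc}(x_0)$. The key structural observation is that $\mathcal W_N(0)$ is a genuine smoothing operator: since $h_0\equiv0$ by \eqref{thm-ODE-meomery-asymptotic-estimate-hypobolic}, we have $\mathcal W_N(0)=\sum_{l=1}^{N-1}h_l(0)(-A)^{-l-1}$, a finite linear combination of the operators $(-A)^{-l-1}$ with $l\geq1$, so $\mathcal W_N(0)\in\mathcal L(\mathcal H^s,\mathcal H^{s+4})$ for every $s\in\mathbb R$; in particular it preserves $\mathcal H^{-\infty}$.

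First I would record a local-regularity property of $\mathcal W_N(0)$: iterating the elliptic-regularity fact \eqref{A-loc-behavior} (established inside the proof of Proposition \ref{pro-HN-wavelike}) shows that for $z\in\mathcal H^{-\infty}$ with $z\in L^2_{loc}(x_0)$ one has $A^{-k}z\in L^2_{loc}(x_0)$ for every $k\in\mathbb N$; consequently $\mathcal W_N(0)$ maps the set $\{z\in\mathcal H^{-\infty}:z\in L^2_{loc}(x_0)\}$ into itself. From this the implication ``$y_0\in L^2_{loc}(x_0)\Rightarrow\mathcal P_N(0)y_0\in L^2_{loc}(x_0)$'' is immediate, since $\mathcal P_N(0)y_0=y_0-\mathcal W_N(0)y_0$.

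For the converse I would iterate the relation $y_0=g+\mathcal W_N(0)y_0$. Choose $m\in\mathbb N^+$ with $y_0\in\mathcal H^{-2m}$ (possible because $y_0\in\mathcal H^{-\infty}$ and $\mathcal H^s\subset\mathcal H^{s'}$ whenever $s\geq s'$). Applying the relation $m$ times gives
\begin{align*}
y_0=\sum_{k=0}^{m-1}\mathcal W_N(0)^k g+\mathcal W_N(0)^m y_0.
\end{align*}
Each summand $\mathcal W_N(0)^k g$ with $0\leq k\leq m-1$ belongs to $L^2_{loc}(x_0)$ because $g\in L^2_{loc}(x_0)$ and $\mathcal W_N(0)$ preserves that class; moreover $\mathcal W_N(0)^m y_0\in\mathcal H^{-2m+4m}=\mathcal H^{2m}\subset L^2(\Omega)\subset L^2_{loc}(x_0)$, since each factor $\mathcal W_N(0)$ gains at least four derivatives. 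Summing, $y_0\in L^2_{loc}(x_0)$, which finishes the argument.

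I expect the only delicate point — the main obstacle — to be the identification of $\mathcal W_N(0)$ as an operator of order at most $-4$ with \emph{no identity part}: this is exactly where $h_0\equiv0$ enters, and it is what makes the finite telescoping terminate in the globally $L^2$ space $\mathcal H^{2m}$ rather than merely improving Sobolev regularity by a fixed finite amount. The remaining steps are routine bookkeeping resting on \eqref{A-loc-behavior} and the nesting of the spaces $\mathcal H^s$.
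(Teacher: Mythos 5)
Your proposal is correct and follows essentially the same route as the paper: reduce to the statement that $y_0\in L^2_{loc}(x_0)$ iff $\mathcal P_N(0)y_0\in L^2_{loc}(x_0)$, use the elliptic local-regularity fact \eqref{A-loc-behavior} iteratively for one implication, and use the finite Neumann-series/telescoping argument (terminating in $L^2(\Omega)$ because $y_0\in\mathcal H^{-2m}$ and the perturbation of the identity is a strictly smoothing operator) for the converse. Writing $\mathcal P_N(0)=Id-\mathcal W_N(0)$ rather than $Id+\sum_l p_l(0)(-A)^{-l-1}$ is only a cosmetic repackaging of the paper's argument, which invokes the same telescoping step via the implication ``\eqref{august-ps-weak} $\Rightarrow$ \eqref{Sep-A-2-y0-L2-x0}'' from Proposition \ref{pro-HN-wavelike}.
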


\begin{proof}
	 First of all, since $\varPhi(0)y_0=y_0$, we have
		\begin{align}\label{0921-varPhi(0)-y0}
		\varPhi(0)y_0 \not\in L^2_{loc}(x_0)  \Leftrightarrow
		y_0 \not\in L^2_{loc}(x_0).
		\end{align}
		We claim
		\begin{align}\label{0921-PN(0)-y0}
		\mathcal P_N(0)y_0 \not\in L^2_{loc}(x_0)  \Leftrightarrow
		y_0 \not\in L^2_{loc}(x_0)
		\end{align}
		When this is proved, the conclusion in this proposition  follows from \eqref{0921-varPhi(0)-y0} and \eqref{0921-PN(0)-y0}
at once.
		
		We now  show \eqref{0921-PN(0)-y0}.
To prove the necessity,
we suppose, by contradiction, that  the statement on the left-hand side of \eqref{0921-PN(0)-y0} is true, but $y_0\in L^2_{loc}(x_0)$.  Then we apply \eqref{A-loc-behavior} to obtain that
		\begin{align*}
			A^{-j}y_0 \in L^2_{loc}(x_0)
			~~\text{for each}~~
			j\in \mathbb N.
		\end{align*}
		At the same time, it follows from
		\eqref{def-PN-HN-RN} that
		\begin{align*}
		\mathcal P_N(0)y_0=y_0 + \sum_{l=0}^{N-1} p_l(0) (-A)^{-l-1}y_0.
		\end{align*}
		These imply that  $\mathcal P_N(0)y_0 \in L^2_{loc}(x_0)$, which  contradicts the statement on the left-hand side of \eqref{0921-PN(0)-y0}. Therefore, we have shown the necessity.
		
		To show the sufficiency, we suppose, by contradiction, that  the statement on the right-hand side of \eqref{0921-PN(0)-y0}
holds, but $\mathcal P_N(0)y_0\in L^2_{loc}(x_0)$. Then from \eqref{def-PN-HN-RN}, there is a sequence $(\hat c_l)_{l=0}^{N-1}\subset \mathbb R$ so that
		\begin{align*}
			y_0+\sum_{l=0}^{N-1} \hat c_l A^{-l-1} y_0 \in L^2_{loc}(x_0).
		\end{align*}
		Then  by the similar way as that used in the proof of ``\eqref{august-ps-weak} $\Rightarrow$ \eqref{Sep-A-2-y0-L2-x0}", one can get that $y_0\in L^2_{loc}(x_0)$. This contradicts the statement on the right-hand side of \eqref{0921-PN(0)-y0}. Therefore, the sufficiency is proved. Hence, we finish  the proof of Proposition \ref{pro-varPhi-dominatings-by-PN}.		\end{proof}

\begin{proposition}\label{prop-PW-nonprojections}
The following conclusions are true:

\noindent $(i)$ For each  integer $N\geq 2$,
\begin{align}\label{PWR-initial-time}
\mathfrak R_N(0)=0
~~\text{and}~~
  y_0 =  \mathcal P_N(0)y_0  +  \mathcal W_N(0) y_0\;\;\mbox{for each}\;\; y_0 \in L^2(\Omega).
\end{align}

\noindent $(ii)$ For each  integer $N\geq 2$,
 $\mathcal P_N(0)$ and $\mathcal W_N(0)$ are projections over $L^2(\Omega)$ if and only if
\begin{align}\label{M-derivatives-N-zero}
  M(0)=\cdots=M^{(j)}(0)=\cdots=M^{(N-2)}(0)=0.
\end{align}
\end{proposition}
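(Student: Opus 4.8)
The plan is to treat the two parts in order. For part $(i)$, I would first read off from \eqref{0921-RN-good-remainder} that $R_N(0,\tau)=\int_0^0\tau e^{-\tau s}\partial_s^N K_M(0,s)\,ds=0$ for every $\tau\geq 0$; hence the function $R_N(0,\cdot)$ is identically zero and, by the functional calculus, $\mathfrak R_N(0)=R_N(0,-A)(-A)^{-N-1}=0$. For the identity $y_0=\mathcal P_N(0)y_0+\mathcal W_N(0)y_0$ I would observe that the sum defining $p_l(t)$ in \eqref{thm-ODE-meomery-asymptotic-estimate-hypobolic} runs only over $m\in\mathbb N^+$ and thus carries the factor $(-t)^m$ with $m\geq 1$; evaluating at $t=0$ gives $p_l(0)=-h_l(0)$ for all $l$, so that $\mathcal P_N(0)+\mathcal W_N(0)=Id+\sum_{l=0}^{N-1}\big(p_l(0)+h_l(0)\big)(-A)^{-l-1}=Id$. (Alternatively this is Theorem \ref{cor-0423-demcomposition} at $t=0$, combined with $\varPhi(0)=Id$ and $\mathfrak R_N(0)=0$.)

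For part $(ii)$, the first move is to reduce the claim to a statement about $\mathcal W_N(0)$ alone. By part $(i)$ we have $\mathcal P_N(0)=Id-\mathcal W_N(0)$, so $\mathcal P_N(0)^2=\mathcal P_N(0)$ holds if and only if $\mathcal W_N(0)^2=\mathcal W_N(0)$; the two operators are therefore projections simultaneously. Write $\mathcal W_N(0)=g(-A)$ with $g(\eta):=\sum_{l=1}^{N-1}h_l(0)\eta^{-l-1}$ (recall $h_0\equiv 0$). By the spectral representation, $\mathcal W_N(0)^2=\mathcal W_N(0)$ is equivalent to $g(\eta_j)\in\{0,1\}$ for every eigenvalue $\eta_j$ of $-A$. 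Since $\eta_j\to+\infty$ and $g(\eta)\to 0$, this forces $g(\eta_j)=0$ for all large $j$; as $\eta\mapsto\eta^{N+1}g(\eta)$ is a polynomial (of degree $\leq N-1$) vanishing at infinitely many distinct points, it vanishes identically, i.e. $h_l(0)=0$ for $l=1,\dots,N-1$, whence $\mathcal W_N(0)=0$. Conversely, if $h_l(0)=0$ for all such $l$, then $\mathcal W_N(0)=0$ and $\mathcal P_N(0)=Id$, both trivially projections. Thus $(ii)$ amounts to the equivalence: $h_l(0)=0$ for every $l\in\{1,\dots,N-1\}$ if and only if $M(0)=M'(0)=\cdots=M^{(N-2)}(0)=0$.

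To establish this equivalence I would analyze $h_l(0)$ via \eqref{thm-ODE-meomery-asymptotic-estimate-hypobolic}. The elementary fact needed — the $t=0$ companion of Lemma \ref{lem-convolution-estimates}, obtained by iterating the rule $\frac{d^m}{dt^m}(f*g)(0)=\sum_{i=0}^{m-1}f^{(i)}(0)g^{(m-1-i)}(0)$ ($m\geq 1$, while $(f*g)(0)=0$) over the $j$ convolution factors — is that $\frac{d^m}{dt^m}\underbrace{M*\cdots*M}_{j}(0)$ vanishes when $m<j-1$ and is otherwise a polynomial without constant term in $M(0),\dots,M^{(m-j+1)}(0)$. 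Feeding this into \eqref{thm-ODE-meomery-asymptotic-estimate-hypobolic}, the $j=0$ term drops out, the $j=1$ term equals $(-1)^l C_l^{l-1}M^{(l-1)}(0)=(-1)^l l\,M^{(l-1)}(0)$, and the terms with $j\geq 2$ involve only derivatives of $M$ at $0$ of order at most $l-2j+1\leq l-3$; hence, for every $l\geq 1$,
\begin{align*}
h_l(0)=(-1)^l\,l\,M^{(l-1)}(0)+P_l\big(M(0),\dots,M^{(l-3)}(0)\big),
\end{align*}
where $P_l$ is a polynomial vanishing at the origin. From this triangular dependence the equivalence follows: if $M^{(k)}(0)=0$ for $0\leq k\leq N-2$, then clearly $h_l(0)=0$ for $1\leq l\leq N-1$; conversely, assuming $h_1(0)=\cdots=h_{N-1}(0)=0$, one proves $M^{(k)}(0)=0$ for $k=0,1,\dots,N-2$ by induction on $k$, the base case being $h_1(0)=-M(0)=0$, and the step being: if $M(0)=\cdots=M^{(k-1)}(0)=0$ then $P_{k+1}$ vanishes, so $0=h_{k+1}(0)=(-1)^{k+1}(k+1)M^{(k)}(0)$ forces $M^{(k)}(0)=0$.

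I expect the last paragraph to contain the only real difficulty: pinning down the precise triangular structure of $h_l(0)$ in the Taylor data of $M$ at $0$ requires a careful bookkeeping of the derivatives of iterated convolutions at the origin, refining the regularity estimates already recorded in Lemma \ref{lem-convolution-estimates}. Everything else — part $(i)$, the reduction of part $(ii)$ to $\mathcal W_N(0)=0$, and the final induction — is routine functional calculus and elementary algebra.
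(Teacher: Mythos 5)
Your proof is correct and follows essentially the same route as the paper's: part $(i)$ via the decomposition at $t=0$ (with $R_N(0,\cdot)\equiv 0$ and $p_l(0)=-h_l(0)$), and part $(ii)$ via the idempotency of $\mathcal W_N(0)$ tested on the eigenfunctions $e_j$, which together with $\eta_j\to+\infty$ forces $h_l(0)=0$ for all $0\leq l\leq N-1$. The only difference is that you spell out the step the paper dismisses as ``direct computations'', namely the triangular dependence $h_l(0)=(-1)^l\,l\,M^{(l-1)}(0)+P_l\big(M(0),\dots,M^{(l-3)}(0)\big)$ obtained from the exact formula for derivatives of iterated convolutions at the origin, and the ensuing induction; this bookkeeping is accurate.
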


\begin{proof}
 First, the conclusion $(i)$  follows from  \eqref{def-PN-HN-RN}, \eqref{0921-RN-good-remainder} and \eqref{0423-demcomposition-eq} (with $t=0$) at once.

 We next prove the conclusion $(ii)$.
 To show the sufficiency, we assume that \eqref{M-derivatives-N-zero} is true.
  Then by \eqref{M-derivatives-N-zero}, we can apply \eqref{convolution-two-ineqs} (with $t\rightarrow 0^+$) to get
\begin{align*}
  \frac{d^{l} }{dt^l}  \underset{j}{ \underbrace{M*\cdots*M} }(0)=0,~j\in \mathbb N^+,~l\in\{0,\ldots,N-2\}.
\end{align*}
This, along with \eqref{thm-ODE-meomery-asymptotic-estimate-hypobolic}, yields
\begin{align*}
   h_l(0)=0,~~0\leq l \leq N-1,
\end{align*}
from which and \eqref{def-PN-HN-RN}, it follows $\mathcal W_N(0)=0$ and consequently, $\mathcal W_N(0)$
is a projection. Then by the second equality in \eqref{PWR-initial-time},
we see that $\mathcal P_N(0)$ is a projection.

To prove the necessity, we suppose
 that $\mathcal P_N(0)$ and $\mathcal W_N(0)$ are projections over $L^2(\Omega)$. Then we have
\begin{align*}
   \mathcal W_N(0)^2y_0= \mathcal W_N(0)y_0\;\;\mbox{for each}\;\;y_0\in L^2(\Omega).
\end{align*}
Taking $y_0=e_j$ with $j\in \mathbb N^+$ in the above and using  \eqref{def-PN-HN-RN},  we find
\begin{align*}
  \bigg( \sum_{l=0}^{N-1} h_l(0) \eta_j^{-l-1}  \bigg)^2 e_j &= \bigg( \sum_{l=0}^{N-1} h_l(0) (-A)^{-l-1}  \bigg)^2 e_j
  = \mathcal W_N(0)^2y_0
  \nonumber\\
  &=\mathcal W_N(0)y_0
  = \bigg( \sum_{l=0}^{N-1} h_l(0) (-A)^{-l-1}  \bigg) e_j
   =\bigg( \sum_{l=0}^{N-1} h_l(0) \eta_j^{-l-1}  \bigg) e_j.
\end{align*}
This implies
\begin{align*}
  \bigg( \sum_{l=0}^{N-1} h_l(0) \eta_j^{-l-1}  \bigg)^2
   = \sum_{l=0}^{N-1} h_l(0) \eta_j^{-l-1}\;\;\mbox{for all}\;\;j\geq 1.
\end{align*}
Since $\displaystyle\lim_{j\rightarrow+\infty} \eta_j=+\infty$,  the above, divided by $\eta_j^{-1},\ldots,\eta_j^{-N}$ respectively, gives  $h_l(0)=0$, $0\leq l\leq N-1$. Then by direct computations and by \eqref{thm-ODE-meomery-asymptotic-estimate-hypobolic},  we get \eqref{M-derivatives-N-zero}. This ends the proof of Proposition \ref{prop-PW-nonprojections}.
\end{proof}

\begin{proposition}\label{prop-varPhi-expression}
	Let $K_M$ be given by (\ref{new-kernel-KM}). Then
	\begin{align}\label{eq-varPhi-expression}
	\varPhi(t)^*=\varPhi(t)
	= e^{tA} +   \int_0^t K_M(t,\tau) e^{\tau A} d\tau,
	\;\;  t\geq 0.
	\end{align}
		Moreover, it holds that
	\begin{align}\label{varPhi-Hs-estimate}
	 \sup_{s\in \mathbb R} \| \varPhi(t) - e^{tA} \|_{ \mathcal L( \mathcal H^s ) }
	\leq   \inf_{ \lambda \geq -\eta_1 }
	e^{\lambda t}  \bigg[ \exp\Big( t
	\int_0^t e^{-\lambda \tau} |M(\tau)| d\tau \Big)
	- 1  \bigg],~~
	t\geq 0.
	\end{align}
	(Here, $-\eta_1$ is the first eigenvalue of $A$.)
	\end{proposition}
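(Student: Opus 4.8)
The plan is to reduce the whole statement to the scalar memory‑ODE \eqref{ode-memory} through the spectral decomposition of $-A$, exactly as in the proof of Theorem \ref{cor-0423-demcomposition}. Writing $y_0=\sum_{j\ge1}y_{0,j}e_j\in L^2(\Omega)$, the Fourier coefficients of $y(\cdot\,;y_0)$ satisfy the scalar equations obtained by testing \eqref{our-system} against $e_j$; by uniqueness and linearity this gives $y(t;y_0)=\sum_{j\ge1}w_{\eta_j}(t)\,y_{0,j}\,e_j$, where $w_{\eta_j}$ solves \eqref{ode-memory} with $\eta=\eta_j$. Since the coefficients $w_{\eta_j}(t)$ are real, $\varPhi(t)$ acts diagonally with real multipliers in the orthonormal basis $\{e_j\}$ of $L^2(\Omega)$, so $\varPhi(t)^*=\varPhi(t)$.

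For the representation formula I would invoke Lemma \ref{lem-ode-memory-infinite-series}, which yields
$$w_{\eta_j}(t)=e^{-\eta_j t}+\int_0^t K_M(t,\tau)\,e^{-\eta_j\tau}\,d\tau,\qquad j\ge1.$$
The term $e^{-\eta_j t}$ reassembles into $e^{tA}y_0$. For the remaining term I would set $B(t)y_0:=\int_0^t K_M(t,\tau)\,e^{\tau A}y_0\,d\tau$, a Bochner integral in $L^2(\Omega)$ that is well defined because $K_M$ is continuous on $S_+$ (Proposition \ref{prop-KM-Ck-regularity}) and $\{e^{\tau A}\}_{\tau\ge0}$ is strongly continuous, so $B(t)\in\mathcal L(L^2(\Omega))$ and its $j$‑th Fourier coefficient is $y_{0,j}\int_0^t K_M(t,\tau)e^{-\eta_j\tau}\,d\tau$. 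Matching Fourier coefficients term by term against the displayed identity for $w_{\eta_j}(t)$ gives $\varPhi(t)y_0=e^{tA}y_0+B(t)y_0$ for all $y_0\in L^2(\Omega)$, i.e. \eqref{eq-varPhi-expression}.

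To prove \eqref{varPhi-Hs-estimate}, observe that $\varPhi(t)-e^{tA}$ is diagonal in $\{e_j\}$ with multiplier $d_j(t):=\int_0^t K_M(t,\tau)e^{-\eta_j\tau}\,d\tau$ on $e_j$; hence for every $s\in\mathbb R$ one has $\|\varPhi(t)-e^{tA}\|_{\mathcal L(\mathcal H^s)}=\sup_{j\ge1}|d_j(t)|$, independently of $s$. Now fix $\lambda\ge-\eta_1$. Because $\eta_j\ge\eta_1\ge-\lambda$, the exponent $\lambda+\eta_j$ is nonnegative, and the elementary identity $e^{-\eta_j\tau}=e^{\lambda t}\,e^{-(\lambda+\eta_j)\tau}\,e^{-\lambda(t-\tau)}$ forces $0\le e^{-\eta_j\tau}\le e^{\lambda t}e^{-\lambda(t-\tau)}$ for $0\le\tau\le t$. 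Consequently
$$|d_j(t)|\le\int_0^t|K_M(t,\tau)|\,e^{-\eta_j\tau}\,d\tau\le e^{\lambda t}\int_0^t e^{-\lambda(t-\tau)}\,|K_M(t,\tau)|\,d\tau\le e^{\lambda t}\Big[\exp\Big(t\int_0^t e^{-\lambda\tau}|M(\tau)|\,d\tau\Big)-1\Big],$$
the last inequality being Proposition \ref{prop-KM-regularity}. Taking the supremum over $j\ge1$ and then the infimum over $\lambda\ge-\eta_1$ gives \eqref{varPhi-Hs-estimate}.

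The steps that are purely routine are the diagonalization, the self‑adjointness, and the continuity needed to make sense of the operator‑valued integral; the only point that needs a small idea is the weighting identity $e^{-\eta_j\tau}=e^{\lambda t}e^{-(\lambda+\eta_j)\tau}e^{-\lambda(t-\tau)}$, which converts the decay factor $e^{-\eta_j\tau}$ into the $e^{-\lambda(t-\tau)}$ form required by Proposition \ref{prop-KM-regularity}, using only the crude bound $\eta_j\ge\eta_1$. I do not anticipate a genuine obstacle here; the main care is the bookkeeping with the weight $\lambda$.
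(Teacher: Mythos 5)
Your proposal is correct and follows essentially the same route as the paper: spectral diagonalization onto the memory-ODE, Lemma \ref{lem-ode-memory-infinite-series} for the kernel representation, and Proposition \ref{prop-KM-regularity} for the weighted bound (the paper passes through $\|e^{\tau A}\|_{\mathcal L(\mathcal H^s)}\le e^{-\eta_1\tau}$ and then the inequality $e^{-\eta_1\tau}\le e^{\lambda t}e^{-\lambda(t-\tau)}$ for $\lambda\ge-\eta_1$, which is the same weighting trick you phrase via the multipliers $d_j(t)$). The only cosmetic difference is that you obtain self-adjointness directly from the real diagonal multipliers, whereas the paper deduces it from the representation formula and the self-adjointness of $e^{tA}$; both are valid.
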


\begin{proof}
	Arbitrarily fix $s\in \mathbb R$ and $y_0 \in \mathcal H^s$.
By \eqref{4.410-18}, \eqref{4.710-18}
	 and  Lemma \ref{lem-ode-memory-infinite-series},
	we find
	\begin{align}\label{4.6,7.24}
	y(t;y_0)= \sum_{j\geq 1}  w_{\eta_j} (t) y_{0,j}   e_j
	=  \sum_{j\geq 1}  \Big( e^{-\eta_j t} +
	\int_0^t K_M(t,\tau)  e^{-\eta_j \tau} d\tau
	\Big) y_{0,j} e_j,\;\;t\geq 0.
	\end{align}
	Now the second equality in \eqref{eq-varPhi-expression} follows from (\ref{4.6,7.24})
	and (\ref{varPhi-y-y0}), while the first equality in \eqref{eq-varPhi-expression}
	follows by
	the second one in \eqref{eq-varPhi-expression}
	and the fact that
	for each $t\geq 0$, $e^{tA}=e^{tA^*}$ in $\mathcal L(\mathcal H^s)$.

	Finally,  it follows from \eqref{eq-varPhi-expression} that
	\begin{align*}
	\| \varPhi(t) - e^{tA} \|_{ \mathcal L( \mathcal H^s ) }
	\leq \int_0^t \| e^{\tau A} \|_{ \mathcal L( \mathcal H^s ) }
	| K_M(t,\tau) | d\tau
	\leq \int_0^t e^{- \eta_1 \tau}
	| K_M(t,\tau) |  d\tau,\; t\geq 0.
	\end{align*}
	This, along with 	\eqref{KM-regularity-estimate}
in Proposition \ref{prop-KM-regularity}, where $\lambda\geq -\eta_1$, yields
	\begin{align*}
	\| \varPhi(t) - e^{tA} \|_{ \mathcal L( \mathcal H^s ) }
	\leq e^{\lambda t}
	\int_0^t  e^{-\lambda (t-\tau)}
	| K_M(t,\tau) | d\tau
	\leq e^{\lambda t}
	\bigg[
	\exp\bigg( t  \int_0^t e^{-\lambda \tau} |M(\tau)| d\tau
	\bigg) - 1  \bigg], \; t\geq 0,
	\end{align*}
	which leads to \eqref{varPhi-Hs-estimate} and completes the proof of Proposition \ref{prop-varPhi-expression}.	
\end{proof}

\begin{proposition}\label{cor-evolution-high-regularity}
Let $s\in \mathbb{R}$. Then $\varPhi(\cdot)$ is real analytic from $\mathbb R^+$ to $\mathcal L( \mathcal H^s )$.
\end{proposition}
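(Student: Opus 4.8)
The plan is to combine the representation $\varPhi(t) = e^{tA} + \int_0^t K_M(t,\tau)e^{\tau A}\,d\tau$ from Proposition \ref{prop-varPhi-expression} with the holomorphic extension of $K_M$ that was already constructed inside the proof of Proposition \ref{prop-KM-analytic}. Fix $t_0>0$ and pick $T>t_0$. From that proof I recall the bounded convex complex neighbourhood $\mathcal O_T\supset[0,T]$ and the holomorphic extension $\widetilde{K_M}$ of $K_M$ to $\mathcal D_T=\{(t,s)\in\mathbb C^2:\ t-s\in\mathcal O_T\}$, together with the bound $|\widetilde{K_M}(t,s)|\le e^{C|s|}-1$ coming from property (P2) there. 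I will also use that $z\mapsto e^{zA}$ is a bounded holomorphic semigroup on the half-plane $\{\operatorname{Re}z>0\}$ with values in $\mathcal L(\mathcal H^s)$ (standard for the non-positive self-adjoint generator $A$; indeed $\|e^{zA}\|_{\mathcal L(\mathcal H^s)}=\sup_j e^{-\eta_j\operatorname{Re}z}\le 1$ for $\operatorname{Re}z\ge 0$), so that $e^{\cdot A}$ is manifestly real analytic on $\mathbb R^+$. It therefore remains only to analytically continue the integral term near $t_0$.

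After the rescaling $\tau=\sigma t$, the integral term is $F(t):=t\int_0^1 \widetilde{K_M}(t,\sigma t)\,e^{\sigma t A}\,d\sigma$, which I would consider for $t$ in a small ball $B(t_0,r)\subset\mathcal O_T\cap\{\operatorname{Re}t>0\}$. For such $t$ and $\sigma\in[0,1]$ one has $\operatorname{Re}(\sigma t)=\sigma\operatorname{Re}t\ge 0$, so $e^{\sigma t A}$ is well defined with norm $\le 1$, and $t-\sigma t=(1-\sigma)t\in[0,t]\subset\mathcal O_T$ by convexity, so $(t,\sigma t)\in\mathcal D_T$; hence the integrand is well defined, jointly continuous in $(t,\sigma)$, and uniformly bounded on $B(t_0,r)\times[0,1]$. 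For each fixed $\sigma$, the map $t\mapsto t\,\widetilde{K_M}(t,\sigma t)\,e^{\sigma t A}$ is holomorphic on $B(t_0,r)$, being a product of holomorphic factors. A routine $\mathcal L(\mathcal H^s)$-valued argument — dominated convergence applied to the difference quotients, whose uniform boundedness follows from a Cauchy estimate on a slightly larger disc, or equivalently Morera's theorem together with Fubini — then shows $F$ is holomorphic on $B(t_0,r)$. Restricting to real $t\in(t_0-r,t_0+r)$ and undoing the substitution gives $F(t)=\int_0^t K_M(t,\tau)e^{\tau A}\,d\tau=\varPhi(t)-e^{tA}$, so $\varPhi$ extends holomorphically across $t_0$; since $t_0>0$ was arbitrary, $\varPhi$ is real analytic on $\mathbb R^+$.

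The main obstacle is the passage from ``holomorphic in $t$ for each fixed $\sigma$'' to ``the $\sigma$-integral is holomorphic in $t$'' at the level of operator-valued maps, i.e.\ justifying differentiation under the integral sign while keeping the integration contour inside the two \emph{distinct} domains of holomorphy, that of $\widetilde{K_M}$ and that of $z\mapsto e^{zA}$; the choice of the ball $B(t_0,r)$ and the rescaling $\tau=\sigma t$ are precisely what reconcile these constraints, since $\sigma\ge 0$ forces $\operatorname{Re}(\sigma t)\ge 0$ while convexity of $\mathcal O_T$ keeps $(t,\sigma t)$ in $\mathcal D_T$. An alternative that avoids complexification is to estimate the $t$-derivatives of $\int_0^t K_M(t,\tau)e^{\tau A}\,d\tau$ directly: each differentiation produces either a derivative of $K_M$ (controlled with factorial growth by Proposition \ref{prop-KM-Ck-regularity}) or a power of $A$ hitting $e^{\tau A}$ (with $\|\sigma^m A^m e^{\sigma t A}\|_{\mathcal L(\mathcal H^s)}\le (m/(et))^m\le m!\,t^{-m}$ uniformly in $\sigma\in[0,1]$), and a Leibniz / Fa\`a-di-Bruno bookkeeping yields $\|\tfrac{d^k}{dt^k}\varPhi(t_0)\|_{\mathcal L(\mathcal H^s)}\le C\,k!\,r^{-k}$ on a neighbourhood of $t_0$, hence convergence of the Taylor series. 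This route is more computational but entirely elementary.
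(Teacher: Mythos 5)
Your main argument is correct and takes the same route as the paper: it combines the representation $\varPhi(t)=e^{tA}+\int_0^t K_M(t,\tau)e^{\tau A}\,d\tau$ from Proposition \ref{prop-varPhi-expression} with the analyticity of $K_M$ (Proposition \ref{prop-KM-analytic}) and of $t\mapsto e^{tA}$, merely supplying the complexification, the rescaling $\tau=\sigma t$, and the Morera-type argument that the paper's two-sentence proof leaves implicit. Be careful with the alternative real-variable route sketched at the end, though: Proposition \ref{prop-KM-Ck-regularity} does \emph{not} give factorial control of $\partial_s^N K_M$, since its bound places $\|M\|_{C^{\alpha+\beta}}$ inside an exponential, and for real analytic $M$ this $C^{\alpha+\beta}$-norm already grows factorially in $\alpha+\beta$, so the resulting estimate on $\partial_s^N K_M$ is super-exponential rather than of Cauchy type; genuine factorial bounds on the derivatives of $K_M$ come from its holomorphic extension, i.e.\ from the complexification your main argument already uses.
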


\begin{proof}
	 According to Proposition \ref{prop-KM-analytic}, $K_M$ is real analytic over
	$S_+$. Then by \eqref{eq-varPhi-expression} and
	the analyticity of $\{e^{tA}\}_{t> 0}$, we obtain
	that $\varPhi(\cdot)$ is real analytic from $\mathbb R^+$ to $\mathcal L( \mathcal H^s )$.
	This ends the proof of Proposition \ref{cor-evolution-high-regularity}.	
\end{proof}

\subsection{Less regular memory kernels}

The techniques of this paper can also be employed to handle less regular memory kernels. Assume that:
\begin{itemize}
	\item[] $(\mathfrak C_1)$ ~~The memory kernel $M$ is in $C^{N_0}(\overline{\mathbb R^+})$ for a fixed integer $N_0\geq 2$.
\end{itemize}
Then, the  following holds, and can be proved by the  same arguments of the proofs of  Theorems \ref{cor-0423-demcomposition}-\ref{210320-yb-thm-main-explanations}.

\begin{theorem}\label{thm-decomposition-extension}
	Suppose that  $(\mathfrak C_1)$ is true. Then for each $N\in \{2,\ldots,N_0\}$,
all results of Theorems \ref{cor-0423-demcomposition}-\ref{210320-yb-thm-main-explanations}, except for \eqref{2021-april-hl-pl-nontrivial}-\eqref{yb-202104-wn-propagation} and
\eqref{202104-varphi-wn-y0},   are true.
\end{theorem}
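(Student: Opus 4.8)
The plan is to revisit the proofs of Theorems~\ref{cor-0423-demcomposition}--\ref{210320-yb-thm-main-explanations} and to locate every point at which the analyticity (or the $C^\infty$-smoothness) of $M$ was actually invoked, and then to check that, for a fixed index $N\in\{2,\dots,N_0\}$, each such invocation uses at most $N_0$ derivatives of $M$. The only genuine obstruction to dropping analyticity altogether is Proposition~\ref{prop-KM-strong-unique} (the non-degeneracy of $K_M$), whose proof relies on Proposition~\ref{prop-KM-analytic}; but Proposition~\ref{prop-KM-strong-unique} enters the arguments only through conclusion $(ii)$ of Theorem~\ref{thm-ODE-meomery-asymptotic}, which in turn feeds exclusively into \eqref{2021-april-hl-pl-nontrivial}, \eqref{yb-202104-wn-propagation} and \eqref{202104-varphi-wn-y0} --- precisely the statements excluded from Theorem~\ref{thm-decomposition-extension}. (The latter two also rest on the hypothesis $M\not\equiv0$, which is not part of $(\mathfrak C_1)$.)

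First I would record the regularity of the flow kernel under $(\mathfrak C_1)$. By Lemma~\ref{lem-convolution-estimates}, the $j$-fold convolution $\underbrace{M*\cdots*M}_{j}$ lies in $C^{N_0+j-1}(\overline{\mathbb R^+})$, hence each $\mathcal M_j$ in \eqref{def-Mj} belongs to $C^{N_0}(S_+)$ for every $j\ge1$; moreover the bound \eqref{M-j-diff-estimate-ok} remains valid for orders $k\le N_0$ (this is exactly what \eqref{convolution-two-ineqs} delivers), so the estimate \eqref{Mj-alpha-beta-estimate-1} holds whenever $\alpha+\beta\le N_0$. Consequently the series \eqref{KM-Mj-repsentation} converges in $C^{N_0}(S_+)$, so $K_M\in C^{N_0}(S_+)$ and \eqref{zhang-KM-deriavties-estimates} holds for all $\alpha+\beta\le N_0$. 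Proposition~\ref{prop-KM-regularity}, which uses only $M\in L^1_{\mathrm{loc}}$, is unaffected; in particular $K_M$ is well defined and $\varPhi(t)\in\mathcal L(\mathcal H^s)$ for all $s$ (Proposition~\ref{prop5.1}) still holds.

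Next I would rerun Section~3 for the fixed $N\le N_0$. The coefficients $h_l,p_l$ of \eqref{thm-ODE-meomery-asymptotic-estimate-hypobolic} with $0\le l\le N-1$ are well defined, since the derivatives of iterated convolutions they call for have order at most $l\le N_0+j-1$ for every relevant $j\ge1$ (the binding case being $j=1$, which asks for at most $N_0$ derivatives of $M$), again by Lemma~\ref{lem-convolution-estimates}. Lemmas~\ref{lem-ode-memory-infinite-series}, \ref{lem-estimate-for-convolution} and \ref{lem-KM-boundary-value} go through verbatim, as they use only $K_M(t,\cdot)\in C^{N}([0,t])$ and its $s$-derivatives up to order $N\le N_0$; this yields the decomposition \eqref{thm-ODE-meomery-asymptotic-estimate} together with conclusion $(i)$ of Theorem~\ref{thm-ODE-meomery-asymptotic}, in particular \eqref{estimate-p-q-R}. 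Only conclusion $(ii)$ is lost. Feeding this into the spectral-expansion argument of the proof of Theorem~\ref{cor-0423-demcomposition} reproduces the decomposition \eqref{0423-demcomposition-eq} with $\mathcal P_N,\mathcal W_N,\mathfrak R_N$ as in \eqref{def-PN-HN-RN}; Proposition~\ref{pro-PN-heatlike}, Proposition~\ref{pro-RN-smooth-2N+2} and the argument for \eqref{RN-property-regularity} then give conclusions $(i)$ and $(iii)$ of Theorem~\ref{210320-yb-thm-main-explanations}, none of which used analyticity. The remaining statements \eqref{2021-april-hl-pl-nontrivial}, \eqref{yb-202104-wn-propagation} and \eqref{202104-varphi-wn-y0} are dropped because their proofs rest on Proposition~\ref{prop-KM-strong-unique}/Proposition~\ref{prop-KM-analytic} and on $M\not\equiv0$.

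The main (and essentially the only) difficulty is the bookkeeping in the two preceding paragraphs: one must confirm that along the whole chain Lemma~\ref{lem-convolution-estimates}~$\to$~Proposition~\ref{prop-KM-Ck-regularity}~$\to$~Theorem~\ref{thm-ODE-meomery-asymptotic}~$\to$~Theorems~\ref{cor-0423-demcomposition}--\ref{210320-yb-thm-main-explanations}, no step carried out with index $N\le N_0$ ever calls for the $(N_0+1)$-st derivative of $M$. This works cleanly because convolution improves regularity --- a $j$-fold convolution is $C^{N_0+j-1}$ --- so the worst case is always the single kernel $j=1$, which is exactly $C^{N_0}$, while the index $N\le N_0$ never reaches order $N_0+1$. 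I expect no new analytical input to be needed beyond this careful count.
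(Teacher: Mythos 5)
Your proposal is correct and follows exactly the route the paper intends: the paper offers no proof beyond asserting that ``the same arguments'' of Theorems \ref{cor-0423-demcomposition}--\ref{210320-yb-thm-main-explanations} apply, and your derivative-counting along the chain Lemma \ref{lem-convolution-estimates} $\to$ Proposition \ref{prop-KM-Ck-regularity} $\to$ Theorem \ref{thm-ODE-meomery-asymptotic} $\to$ the main theorems, together with the observation that analyticity enters only via Proposition \ref{prop-KM-strong-unique} and hence only the excluded statements, is precisely the verification being left to the reader. No gap.
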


\section{An example}\label{example}

In this section we analyze with more details the decomposition \eqref{0423-demcomposition-eq} in the particular case of
the following  memory kernel:
\begin{align*}
  M(t)=\alpha e^{\lambda t},~~t\geq0,\;\;\mbox{where}\;\;\lambda,\alpha\in\mathbb R,\;\;\mbox{with}\;\alpha\neq0.
\end{align*}
In what follows, we adopt the conventional notation:  $0^0:=1$ and $\displaystyle\sum_{\emptyset}\cdot:=0$.

 In this particular case,
 we have the following explicit expressions:
  For each $l\in \mathbb N$,
\begin{eqnarray}\label{M=exp-hl-pl}
	\left\{
	\begin{array}{l}
		h_l(t)= e^{\lambda t}  (-1)^l
		\displaystyle\sum_{ \tiny\begin{array}{c}
				m,k\in \mathbb N, j\in\mathbb N^+\\
				k-m+2j=l+1,\\
				m+k\leq l-1
			\end{array}
		}
		C_l^{l-j}
		\alpha^j \lambda^{k}
		\frac{t^{m}} {m!},
		~~t\geq0,\\
		p_l(t) =
		(-1)^{l+1}
		\displaystyle\sum_{ \tiny\begin{array}{c}
				m,k\in \mathbb N, j\in\mathbb N^+\\
				k-m+2j=l+1,\\
				m+k\leq l+1
			\end{array}
		}
		C_l^{l-j+m}
		\alpha^j \lambda^{k}
		\frac{(-t)^{m}} {m!},
		~~t\geq0,\\
	R_N(t,\tau)=(-1)^N N! \int_0^t \tau e^{-\tau s}
	e^{\lambda(t-s)}\mathcal{F}_N(t,s)ds,\;\; t,\tau\geq 0,	
	\end{array}
	\right.
\end{eqnarray}
where
\begin{eqnarray*}
	\mathcal{F}_N(t,s):=\sum_{ \tiny\begin{array}{c}
			\beta_1,\beta_2,\beta_3\in \mathbb N, j\in\mathbb N^+\\
			\beta_1+\beta_2+\beta_3=N,\\
			\beta_1\leq j,\beta_2\leq j-1
		\end{array}
	}
	\frac{(-s)^{j-\beta_1}}{(j-\beta_1)!} \frac{(t-s)^{j-1-\beta_2}}{(j-1-\beta_2)!}
	\lambda^{\beta_3} \alpha^j.
\end{eqnarray*}

For this example, the following holds:

\begin{proposition}\label{prop-a.e.t-nontrivial-main-part}
	For almost every $t\geq 0$ and for each $l\in \mathbb N^+$, it holds that
	\begin{align}\label{nontrivial-a.e.t-hl-pl}
		h_l(t)\neq 0 ~~\text{and}~~
		p_l(t)\neq 0.
	\end{align}
	
\end{proposition}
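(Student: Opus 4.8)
The plan is to exploit that, for the exponential kernel $M(t)=\alpha e^{\lambda t}$, the explicit formulas \eqref{M=exp-hl-pl} display $e^{-\lambda t}h_l(t)$ and $p_l(t)$ as genuine \emph{polynomials} in $t$ (finite sums of monomials $t^m/m!$, resp. $(-t)^m/m!$, with $m\in\mathbb N$). Hence it suffices to show that, for each fixed $l\in\mathbb N^+$, neither polynomial is identically zero: a nonzero polynomial has only finitely many real roots, and $e^{\lambda t}>0$ everywhere, so the exceptional set $Z_l:=\{t\ge 0:h_l(t)=0\text{ or }p_l(t)=0\}$ is finite. Then $\bigcup_{l\ge 1}Z_l$ is a countable union of finite sets, hence Lebesgue-null, and for every $t\ge 0$ outside it \eqref{nontrivial-a.e.t-hl-pl} holds simultaneously for all $l\in\mathbb N^+$ — slightly more than claimed.

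To see that $e^{-\lambda t}h_l$ is not the zero polynomial I would locate its top-degree monomial. In the sum defining $h_l$ in \eqref{M=exp-hl-pl}, eliminate $k=l+1+m-2j$ using $k-m+2j=l+1$; the two remaining conditions $k\ge 0$ and $m+k\le l-1$ then read $m+1\le j\le (l+1+m)/2$. This range of admissible $j$ is nonempty precisely when $m\le l-1$, and at the extreme value $m=l-1$ it reduces to $j=l$, whence $k=0$. Thus $t^{l-1}$ is the highest power of $t$ that occurs, and it is contributed by the \emph{single} index triple $(m,k,j)=(l-1,0,l)$; with the convention $0^0=1$, its coefficient in $e^{-\lambda t}h_l(t)$ equals $(-1)^l C_l^0\alpha^l\lambda^0/(l-1)!=(-1)^l\alpha^l/(l-1)!$, which is nonzero because $\alpha\ne 0$. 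Hence $e^{-\lambda t}h_l$ has degree exactly $l-1$ and $h_l\not\equiv 0$.

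The argument for $p_l$ is identical in spirit. Eliminating $k=l+1+m-2j$ in the sum defining $p_l$, the conditions $k\ge 0$ and $m+k\le l+1$ turn into $\max\{m,1\}\le j\le (l+1+m)/2$, nonempty exactly when $m\le l+1$, and at $m=l+1$ forcing $j=l+1$, $k=0$. So $t^{l+1}$ is the top monomial of $p_l(t)$, produced only by $(m,k,j)=(l+1,0,l+1)$, with coefficient $(-1)^{l+1}C_l^l\alpha^{l+1}\lambda^0\cdot(-1)^{l+1}/(l+1)!=\alpha^{l+1}/(l+1)!\ne 0$. Thus $p_l$ has degree exactly $l+1$ and $p_l\not\equiv 0$, and the proof concludes as in the first paragraph.

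The only delicate point is this ``no cancellation at the top degree'' observation: exactly one index triple realises the maximal power of $t$, so the leading coefficient is forced to be nonzero for \emph{every} value of the parameter $\lambda$ (it depends only on $\alpha$), which is what makes \eqref{nontrivial-a.e.t-hl-pl} hold for all $\lambda\in\mathbb R$ and not merely for generic ones; everything else is the elementary fact that a nonzero polynomial has finitely many zeros. It may be worth recording that \eqref{M=exp-hl-pl} itself originates from $\underbrace{M*\cdots*M}_{j}(t)=\alpha^j e^{\lambda t}t^{j-1}/(j-1)!$, which already renders these degree counts transparent.
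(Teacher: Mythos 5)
Your proof is correct and follows essentially the same route as the paper's: identify $e^{-\lambda t}h_l$ (resp.\ $p_l$) as a polynomial, compute its leading coefficient from the unique index triple realising the top degree (the paper records $(-\alpha)^l t^{l-1}/(l-1)!$ for $e^{-\lambda t}h_l$ and says ``the same can be done for $p_l$''), and conclude since a nonzero polynomial has finitely many roots. Your write-up is merely a bit more explicit — spelling out the index bookkeeping, the analogous leading coefficient $\alpha^{l+1}/(l+1)!$ for $p_l$, and the countable-union step that makes the ``almost every $t$, for all $l$'' reading rigorous — but the argument is the paper's.
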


\begin{proof}
Fix $l\geq 1$ arbitrarily. It follows from  \eqref{M=exp-hl-pl} that  the function $t\mapsto e^{-\lambda t} h_l(t)$  ($t\geq 0$) is a polynomial of the order $l-1$
and satisfies
	\begin{align*}
		e^{-\lambda t} h_l(t) = (-\alpha)^l \frac{t^{l-1} }{(l-1)!} + \sum_{m=0}^{l-2} \cdots t^m ,~~t\geq0.
	\end{align*}
	Thus $h_l(t)\neq 0$ for a.e. $t\geq 0$, which implies the first inequality in \eqref{nontrivial-a.e.t-hl-pl}.  The same can be done for $p_l$ by the similar argument. This ends the proof. 	
\end{proof}

\begin{remark}\label{remark5.2}

The following comments on Proposition \ref{prop-a.e.t-nontrivial-main-part}  are worth considering.
\begin{itemize}
	\item[(R1)]
As $N$ increases, for almost every $t\geq 0$, both $\mathcal P_N(t)$ and $\mathcal W_N(t)$ involve more and more nontrivial terms.

	\item[(R2)]
For each $N\in \mathbb{N}^+$ and for a.e. $t\geq 0$, the remainder $\mathfrak R_N(t)$ inherits
	a hybrid heat/wave structure (and thus is not negligible).
Indeed,  from
\eqref{0423-demcomposition-eq}-\eqref{def-PN-HN-RN}, one can directly obtain    the following recursive equality:
	\begin{align}\label{RN-recursive-eq}
		\mathfrak R_j(t)=  \mathfrak R_{j+1}(t)  +   p_{j}(t) e^{tA}(-A)^{-j-1} + h_j(t) (-A)^{-j-1},~t\geq 0,~j\in \mathbb N^+.
	\end{align}
From this and \eqref{nontrivial-a.e.t-hl-pl}, it follows that for each $N\in \mathbb N^+$ and for a.e. $t\geq 0$,  $\mathfrak R_N(t)$ contains both nontrivial terms in  $\mathcal P_{N+1}(t)$ and $\mathcal W_{N+1}(t)$, and thus inherits the  hybrid  structure.


	\item[(R3)] Initial data of the form $\mathcal P_N(0)y_0$ (resp., $\mathcal W_N(0)y_0$), under the action of the flow $\varPhi(\cdot)$, may lead to three nontrivial components. More precisely,
	given a fixed integer $N\geq 2$, there is  $j\in \mathbb N^+$ so that
	the following equality
	\begin{align*}
	\varPhi(t)(\mathcal P_{N}(0)e_{j})
	&=I_{N,1}(t)+I_{N,2}(t)+I_{N,3}(t)\\
	&:= \mathcal P_{N}(t) \big(
	\mathcal P_{N}(0)e_{j} \big)
	+ \mathcal W_{N}(t)  \big( \mathcal P_{N}(0)e_{j} \big)
	+ \mathfrak R_{N}(t) \big( \mathcal P_{N}(0)e_{j} \big),~~t\geq 0
	\end{align*}
	has the  property
	\begin{align}\label{march-three-nonzero}
	I_{N,1}(t),I_{N,2}(t),I_{N,3}(t) \neq 0
	~\text{in}~ L^2(\Omega)
	~\text{for a.e.}~t\geq 0.
	\end{align}
	(This is also true when $\mathcal P_N(0)e_{j}$ is replaced by $\mathcal W_N(0)e_{j}$ in the above statement.)

  \hskip 18pt Indeed, one can see  from \eqref{def-PN-HN-RN} and \eqref{RN-recursive-eq} that for each $j\geq 1$ and $t\geq 0$,
	\begin{align*}
	\left\{
	\begin{array}{lll}
	\mathcal P_N(t) \big( \mathcal P_N(0)e_{j} \big) &=& e^{-t\eta_{j} }
	\Big(1+\sum_{l=0}^{N-1} p_{l}(t)  \eta_{j}^{-l-1}  \Big) \Big(1+\sum_{l=0}^{N-1} p_{l}(0)  \eta_{j}^{-l-1}  \Big)  e_{j},\\
	\mathcal W_N(t) \big( \mathcal P_N(0)e_{j} \big)   &=& \Big(\sum_{l=0}^{N-1} h_{l}(t)
	\eta_{j}^{-l-1}  \Big)  \Big(1+\sum_{l=0}^{N-1} p_{l}(0)  \eta_{j}^{-l-1}  \Big) e_{j},\\
	\mathfrak R_N(t)\big( \mathcal P_N(0)e_{j} \big) &=& \Big(h_{N}(t) \eta_{j}^{-N-1}
	+  p_{N}(t)  e^{-t\eta_{j} }
	\eta_{j}^{-N-1} +  R_{N+1}(t,\eta_{j}) \eta_{j}^{-N-2}  \Big)\\
	& & ~~~~ \Big(1+\sum_{l=0}^{N-1} p_{l}(0)  \eta_{j}^{-l-1}  \Big) e_{j}.
	\end{array}
	\right.
	\end{align*}
	Since $\displaystyle\lim_{j\rightarrow+\infty} \eta_j=+\infty$, the above, along with the first inequality in \eqref{nontrivial-a.e.t-hl-pl} and the estimate \eqref{estimate-p-q-R}, implies
	\begin{align*}
		\mathcal P_N(\cdot)e_j,\mathcal W_N(\cdot)e_j,\mathfrak R_N(\cdot)e_j  \not\equiv 0
		~\text{for large}~j\in \mathbb N^+.
	\end{align*}
	At the same time,  the three functions on the left hand side above are analytic from $\mathbb R^+$ to $L^2(\Omega)$.  (For the first two functions, their analyticity follows from \eqref{def-PN-HN-RN} and \eqref{M=exp-hl-pl} and then the analyticity of  the last one is derived from Proposition \ref{cor-evolution-high-regularity}.) 	
	Therefore, they are non-trivial almost everywhere. This is exactly \eqref{march-three-nonzero}.  (In a similar way, \eqref{march-three-nonzero} can be verified in the case that $\mathcal P_N(0)e_j$ is replaced by $\mathcal W_N(0)e_j$.)
\end{itemize}

\end{remark}

\color{black}
\section{Conclusions and further comments}\label{sec-open-problems}

In this paper we have presented a decomposition for the flow
generated by the equation \eqref{our-system},
which reveals the hybrid parabolic-hyperbolic behavior of the flow. We have also described the  nature of each of the components in the decomposition;
  that has been illustrated through  an example.

A number of interesting issues could be considered in connection with the results and  methods developed
in this paper. Here, we briefly give some of them.

\begin{itemize}
  \item {\it  Smooth memory kernels.} It would be interesting  to analyze whether \eqref{2021-april-hl-pl-nontrivial},
  \eqref{yb-202104-wn-propagation} and
  \eqref{202104-varphi-wn-y0} hold under the assumption that $M\in C^\infty(\overline{ \mathbb R^+ })\setminus\{0\}$.

      \item{\it Decomposition with infinite series. } It would be interesting to obtain a meaningful decomposition without the intervention of the third remainder term.

  \item {\it Space-dependent memory kernels.} The extension of the results of this paper to the space-dependent memory kernels $M=M(t,x)$ is open.

  \item \textit{Memory kernels in the principal part of the model.} It would be interesting to extend our decomposition and analysis to the following two types of heat equations with memory kernels:
\begin{itemize}
	\item[($i$)]  $\partial_t y -  \Delta y - \int_0^t M(t-s) \Delta y(s)ds=0;$
	
	\item[($ii$)]  $\partial_t y -   \int_0^t M(t-s) \Delta y(s)ds=0,$
\end{itemize}
that are more relevant from an applied and modelling viewpoint.

  \item {\it Other equations with memory.} It would be interesting to extend this decomposition to other models such as   wave equations with memory kernels.

\end{itemize}

\section{Appendix}

\begin{proposition}\label{prop5.1}
For each $t\geq 0$ and $s\in \mathbb R$, it holds that
\begin{eqnarray}\label{5.1,2-17}
\varPhi(t)y_0=y(t;y_0),\;y_0\in \mathcal H^s
\end{eqnarray}
and that
 $\varPhi(t)$ belongs to  $\mathcal{L}(\mathcal H^s)$.
\end{proposition}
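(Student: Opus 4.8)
The plan is to reduce everything, via the orthonormal eigenbasis $\{e_j\}$ of $-A$, to a uniform-in-$j$ bound on the scalar solutions $w_{\eta_j}(t)$ of the memory-ODE \eqref{ode-memory}, and then to read off both the membership $\varPhi(t)\in\mathcal L(\mathcal H^s)$ and the identity \eqref{5.1,2-17} from that bound. First I would record the spectral form of the flow on $L^2(\Omega)=\mathcal H^0$: for $y_0=\sum_j y_{0,j}e_j\in L^2(\Omega)$, write $y(t;y_0)=\sum_j y_j(t)e_j$; projecting the mild formulation of \eqref{our-system} onto $e_j$ shows that $t\mapsto y_j(t)$ solves the variation-of-constants form of \eqref{ode-memory} with $\eta=\eta_j$ and datum $y_{0,j}$, whence $y_j(t)=y_{0,j}\,w_{\eta_j}(t)$ by uniqueness for that ODE. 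Thus $\varPhi(t)e_j=w_{\eta_j}(t)e_j$ and $\varPhi(t)y_0=\sum_j y_{0,j}\,w_{\eta_j}(t)\,e_j$ for every $y_0\in L^2(\Omega)$.

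The key estimate is: $\sup_{j\ge 1}|w_{\eta_j}(t)|\le C(t):=\exp\big(t\int_0^t|M(\tau)|\,d\tau\big)$ for each $t\ge 0$. This follows from the representation in Lemma~\ref{lem-ode-memory-infinite-series}, namely $w_{\eta_j}(t)=e^{-\eta_j t}+\int_0^t K_M(t,s)\,e^{-\eta_j s}\,ds$, together with $0<\eta_1\le\eta_j$ (so that $e^{-\eta_j t}\le 1$ and $e^{-\eta_j s}\le 1$ for $s\ge 0$) and Proposition~\ref{prop-KM-regularity} with $\lambda=0$, which bounds $\int_0^t|K_M(t,s)|\,ds$ by $C(t)-1$.

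Finally I would run the density/boundedness argument in $\mathcal H^s$. For arbitrary $s\in\mathbb R$ and $y_0=\sum_j y_{0,j}e_j\in\mathcal H^s$, set $T(t)y_0:=\sum_j y_{0,j}\,w_{\eta_j}(t)\,e_j$; by the key estimate and \eqref{def-space-with-boundary-condition}, $\|T(t)y_0\|_{\mathcal H^s}^2=\sum_j|y_{0,j}|^2|w_{\eta_j}(t)|^2\eta_j^s\le C(t)^2\|y_0\|_{\mathcal H^s}^2$, so the series converges in $\mathcal H^s$ and $T(t)\in\mathcal L(\mathcal H^s)$ with $\|T(t)\|_{\mathcal L(\mathcal H^s)}\le C(t)$. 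The finite linear combinations of the $e_j$ are dense in every $\mathcal H^s$, and on them $T(t)$ agrees with $\varPhi(t)$ by the previous step; hence $T(t)$ is the unique bounded extension to $\mathcal H^s$ (for $s\ge 0$ it is simply $\varPhi(t)$ restricted to $\mathcal H^s\subset L^2(\Omega)$, now seen to land back in $\mathcal H^s$), and by construction $T(t)y_0=y(t;y_0)$, which is \eqref{5.1,2-17}. If one wants continuity in $t$ as well, it is immediate from the $C^1$-regularity of each $w_{\eta_j}$ and the local boundedness of $C(t)$, although the statement only needs the claim for fixed $t$.

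The point requiring most care is staying non-circular: the proofs of Theorem~\ref{cor-0423-demcomposition} and of Proposition~\ref{prop-varPhi-expression} invoke this proposition, so I must rely only on Lemma~\ref{lem-ode-memory-infinite-series} and Proposition~\ref{prop-KM-regularity} (both established earlier) plus the elementary fact that the Fourier coefficients of the $L^2$ mild solution solve the scalar memory-ODE. A secondary subtlety is the reading of \eqref{5.1,2-17} when $s<0$, where $\mathcal H^s$ is larger than $L^2(\Omega)$: there ``$y(t;y_0)$'' must be understood as the convergent series $\sum_j y_{0,j}\,w_{\eta_j}(t)\,e_j$, equivalently as the $\mathcal H^s$-limit of the $L^2$ mild solutions associated with the truncated data $\sum_{j\le k}y_{0,j}e_j$; the uniform bound $C(t)$ is exactly what makes this limit exist and be independent of the truncation.
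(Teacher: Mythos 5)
Your proof is correct, but it takes a genuinely different route from the paper's. The paper's own argument is much softer: it observes that $\{e^{tA}\}_{t\geq 0}$ is a $C_0$-semigroup on $\mathcal H^s$ for every $s\in\mathbb R$, so the standard perturbation/fixed-point argument cited from Pazy yields well-posedness of \eqref{our-system} directly in $C(\overline{\mathbb R^+};\mathcal H^s)$, from which \eqref{5.1,2-17} and the boundedness of $\varPhi(t)$ follow with no spectral decomposition and no kernel estimate. You instead diagonalize the flow on the eigenbasis, invoke Lemma \ref{lem-ode-memory-infinite-series} and Proposition \ref{prop-KM-regularity} (with $\lambda=0$) to obtain the uniform bound $\sup_j|w_{\eta_j}(t)|\leq \exp\big(t\int_0^t|M(\tau)|\,d\tau\big)$, and extend by density. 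Both arguments are valid, and yours is non-circular as you note: the two results you rely on are established independently of this proposition. What your route buys is an explicit, $s$-uniform operator bound (essentially the $\lambda=0$ case of \eqref{varPhi-Hs-estimate} in Proposition \ref{prop-varPhi-expression}) and a concrete identification of $\varPhi(t)$ on $\mathcal H^s$ for $s<0$; what the paper's route buys is brevity and the continuity of $y(\cdot;y_0)$ in $t$ for free. The one point needing care in your version, which you do flag, is that for $s<0$ the object $y(t;y_0)$ must itself be given a meaning: the paper does this through the mild formulation posed in $\mathcal H^s$, and your truncation-limit definition agrees with it by continuous dependence of the mild solution on the data.
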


\begin{proof}
 Arbitrarily fix $s\in \mathbb R$.  Since $\{e^{tA}\}_{t\geq 0}$ is a $C_0$ semigroup over $\mathcal H^s$, we can use a standard method (see for instance \cite[Theorem 1.2 in Section 6.1, p. 184]{Pazy}) to  show that
 for each $y_0 \in \mathcal H^s$,
 the solution $y(\cdot;y_0)$ belongs to the space $C(\overline{ \mathbb R^+ }; \mathcal H^s)$.
 This, along with \eqref{varPhi-y-y0}, yields  \eqref{5.1,2-17}. Moreover, one can directly check that
  for each $t\geq 0$,
 $\Phi(t)\in\mathcal{L}(\mathcal H^s)$. This completes the proof.
\end{proof}

The proof of Lemma \ref{lem-convolution-estimates} is given as follows.

\begin{proof}[Proof of Lemma \ref{lem-convolution-estimates}]
By standard density arguments, it suffices to show
 that for each $k\in \mathbb{N}$, the following property $(\mathcal E_k)$ is true: For all kernels $\{M_l\}_{l=1}^j\subset C^\infty(\overline{\mathbb R^+})$,
\begin{align}\label{zhang-convolution-estimate}
	\Big| \frac{d^k}{dt^k} M_1 * \cdots * M_j (t) \Big|
	\leq  \mathcal C(j,k,t)
	\displaystyle\prod_{ l\leq q(j,k) } \| M_l \|_{C^{p(j,k)}([0,t])}
\displaystyle\prod_{ l> q(j,k) } \| M_l \|_{C^{p(j,k)-1}([0,t])},~~
		t> 0.
	\end{align}
Here and in what follows,
we set $\displaystyle\prod_{\emptyset}\cdot:=1$ and let
\begin{align}\label{july-0705-pq}
\left\{
\begin{array}{lll}
	p(j,k)&:=&\max\{k-j+1,1\},
\\
	q(j,k)&:=&\min\{k,j\},
\\
	\mathcal C(j,k,t) &:=& \displaystyle\sum_{l=j-1-k}^{j-1} \chi_{\mathbb N }(l) \frac{t^l}{l!},~~
t> 0.
\end{array}
\right.
\end{align}

Now we will use the induction to prove the above $(\mathcal E_{k})$
for  each $k\in \mathbb N$. To this end, we first check $(\mathcal E_{0})$.  Indeed,  for each  $j\in \mathbb N^+$ and each $\{M_l\}_{l=1}^j \subset C^{\infty}(\overline{\mathbb R^+})$,
we have that
\begin{align*}
	|M_1 * \cdots * M_j (t)|
\leq \bigg( \prod_{l=1}^j \|M_l\|_{C([0,t])} \bigg)
 \int_0^t\int_0^{t_1}\cdots\int_0^{t_{j-1}} dt_j\cdots dt_1
 = \frac{ t^{j-1} }{ (j-1)! }
	\displaystyle\prod_{l=1}^j \|M_l\|_{C([0,t])}.
\end{align*}
This, along with \eqref{july-0705-pq},  leads to \eqref{zhang-convolution-estimate} with $k=0$. Therefore $(\mathcal E_{0})$ is true.

Next, we will show
 $(\mathcal E_{k_0+1})$ for any $k_0\in \mathbb N$, under the assumption that $(\mathcal E_{k})$ holds for
 all $k\leq k_0$.
For this purpose, we
arbitrarily fix $k_0\in \mathbb N$, $j\in \mathbb N^+$ and  $\{M_l\}_{l=1}^j \subset C^{\infty}(\overline{\mathbb R^+})$.  	
Since \eqref{zhang-convolution-estimate}, with $j=1$,  holds clearly, we only need to focus on the situation that $j\geq 2$.
There are only two possibilities for $j$:  either $k_0\leq j-1$ or $k_0\geq j$.

In the case when  $k_0\leq j-1$, we have three observations: First, by direct computations, we find
\begin{align}\label{july-tj-0705-1}
	\frac{d^{k_0+1}}{dt^{k_0+1}} M_1 * \cdots * M_j  =& \frac{d^{k_0}}{dt^{k_0}}
	\Big( \frac{d}{dt} M_1 * \cdots * M_j    \Big)
	\nonumber\\
	=& \frac{d^{k_0}}{dt^{k_0}}
	\Big( M_1(0) M_2 * \cdots * M_j + M_1^\prime * M_2*\cdots *M_j \Big)
	\nonumber\\
	=& M_1(0) \frac{d^{k_0}}{dt^{k_0}} M_2 * \cdots * M_j +
	\frac{d^{k_0}}{dt^{k_0}} M_2 * \cdots * M_j * M_1^\prime .
\end{align}
Second, we  apply  $(\mathcal E_{k_0})$ twice to find that for each $t>0$,
\begin{align}\label{2.13,7.22}
	\Big| \frac{d^{k_0}}{dt^{k_0}} M_2 * \cdots * M_j(t) \Big|
	\leq  \mathcal C(j-1,k_0,t)	\bigg( \displaystyle\prod_{2\leq l\leq k_0+1} \| M_l \|_{C^{1}([0,t])} \bigg)
	\bigg( \displaystyle\prod_{l>k_0+1} \| M_l \|_{C([0,t])} \bigg);
\end{align}
\begin{align}\label{2.14,7.22}
	\Big| \frac{d^{k_0}}{dt^{k_0}} M_2 * \cdots * M_j*M_1^\prime(t) \Big|
	\leq  \mathcal C(j,k_0,t) 	\bigg( \displaystyle\prod_{2\leq l \leq k_0+1} \| M_l \|_{C^{1}([0,t])} \bigg)
	\bigg( \displaystyle\prod_{l> k_0+1} \| M_l \|_{C([0,t])} \bigg)
	\|M_1^\prime\|_{C([0,t])}.
\end{align}
Third, from  the third definition in (\ref{july-0705-pq}), we see
\begin{align}\label{july-tj-0705-3}
	\max\Big\{ \mathcal C(j-1,k_0,t), ~\mathcal C(j,k_0,t) \Big\}
	\leq \mathcal C(j,k_0+1,t),~~
t>0 .
\end{align}
Now, from  (\ref{july-tj-0705-1}), (\ref{2.13,7.22}), (\ref{2.14,7.22})
and (\ref{july-tj-0705-3}), it follows that  for each $t>0$,
\begin{align*}
	\Big| \frac{d^{k_0+1}}{dt^{k_0+1}} M_1 * \cdots * M_j (t) \Big|
	\leq&  \mathcal C(j,k_0+1,t) \Big( \|M_1\|_{C([0,t])} + \|M_1^\prime\|_{C([0,t])} \Big)
	\nonumber\\
	& \quad \times
	\bigg( \displaystyle\prod_{2\leq l \leq k_0+1} \| M_l \|_{C^{1}([0,t])} \bigg)
	\bigg( \displaystyle\prod_{l>k_0+1} \| M_l \|_{C([0,t])} \bigg).
\end{align*}
This, along with \eqref{july-0705-pq},  leads to \eqref{zhang-convolution-estimate} (with $k=k_0+1\leq j$). Therefore, 	$(\mathcal E_{k_0+1})$ is true when $k_0\leq j-1$.

In the case that  $k_0\geq j$, we see  from (\ref{july-0705-pq}) that
\begin{align*}
	\begin{cases}
		p(j,k_0+1)=p(j-1,k_0)=p(j,k_0)+1=k_0-j+2;
		\\
		q(j,k_0+1)=q(j,k_0)=q(j-1,k_0)+1=j.
	\end{cases}
\end{align*}
Then by the similar arguments as those in (\ref{july-tj-0705-1})-(\ref{july-tj-0705-3}), one can get that for each $t>0$,
\begin{align*}
	\Big| \frac{d^{k_0+1}}{dt^{k_0+1}} M_1 * \cdots * M_j (t) \Big|
	\leq& \mathcal C(j,k_0+1,t) \Big( \|M_1\|_{C([0,t])} + \|M_1^\prime\|_{C^{p(j,k_0)}([0,t])} \Big)
\nonumber\\
&~~~\times
	\displaystyle\prod_{l=2}^{j} \| M_l \|_{C^{p(j-1,k_0)}([0,t])}
	\nonumber\\
	=& \mathcal C(j,k_0+1,t) \displaystyle\prod_{l=1}^{j} \| M_l \|_{C^{p(j,k_0+1)}([0,t])}.
\end{align*}
This, together with \eqref{july-0705-pq},  leads to \eqref{zhang-convolution-estimate} (with $k=k_0+1> j$). Therefore, 	$(\mathcal E_{k_0+1})$ holds  in the case that $k_0\geq j$.

Hence, we complete the proof of Lemma \ref{lem-convolution-estimates}.	
\end{proof}

\bigskip
\footnotesize
\noindent\textit{Acknowledgments.}
The  authors would like to gratefully thank Dr. Huaiqiang Yu
and Dr. Christophe Zhang for their valuable comments and suggestions.

The first and second authors were supported by the National Natural Science Foundation of China under grants 11971022, 11801408 and 12171359.

The  third author has been funded by the Alexander von Humboldt-Professorship program, the European Research Council (ERC) under the European Union's Horizon 2020 research and innovation programme (grant agreement No. 694126-DyCon), the Transregio 154 Project ``Mathematical Modeling, Simulation and Optimization Using the Example of Gas Networks” of the German DFG, the grant MTM2017-92996 of MINECO (Spain), ELKARTEK project KK-2018/00083 ROAD2DC of the Basque Government and the Marie Sklodowska-Curie grant agreement No. 765579-ConFlex.

\end{document}